\setlist[enumerate,1]{label = \normalfont(\roman*), ref = (\roman*)}
\newtheorem{theorem}{Theorem}[section]
\newtheorem{lemma}[theorem]{Lemma}
\newtheorem{proposition}[theorem]{Proposition}
\newtheorem{corollary}[theorem]{Corollary}
\theoremstyle{definition}
\newtheorem{definition}[theorem]{Definition}
\newtheorem{remark}[theorem]{Remark}
\newtheorem{ass}[theorem]{Assumption}
\newtheorem{problem}[theorem]{Problem}
\newtheorem{conv}[theorem]{Convention}
\newcommand{\R}{\mathbb{R}}
\newcommand{\C}{\mathbb{C}}
\newcommand{\N}{\mathbb{N}}
\renewcommand{\L}{\mathrm{L}}
\newcommand{\W}{\mathrm{W}}
\newcommand{\Cont}{\mathrm{C}}
\newcommand{\Lip}{\mathrm{Lip}}
\newcommand{\E}{\mathrm{E}_0}
\newcommand{\Egam}{{\mathbb{E}_\frac{1}{2}}}
\newcommand{\V}{\mathrm{E}_1}
\newcommand{\Ep}{\mathrm{E}^p_0}
\newcommand{\Epgam}{{\mathbb{E}^{p}_\frac{1}{2}}}
\newcommand{\Vp}{\mathrm{E}^p_1}
\newcommand{\Eq}{\mathrm{E}^q_0}
\newcommand{\Eqgam}{{\mathbb{E}^{q}_\frac{1}{2}}}
\newcommand{\Vq}{\mathrm{E}^q_1}
\newcommand{\cL}{\mathcal{L}}
\newcommand{\cF}{\mathcal{F}}
\newcommand{\e}{\mathrm{e}}
\renewcommand{\d}{\,\mathrm{d}}
\newcommand{\eps}{\varepsilon}
\newcommand{\B}{\mathrm{B}}
\renewcommand{\H}{\mathrm{H}}
\renewcommand\Re{\operatorname{Re}}
\renewcommand{\div}{\operatorname{div}}
\renewcommand{\i}{\mathrm{i}}
\newcommand{\Exp}{\mathbb{E}}
\newcommand{\HS}{\mathcal{L}_2}
\newcommand{\Dom}{D}
\newcommand{\wt}{\widetilde}
\newcommand{\X}{\mathcal{X}}
\newcommand{\BMSpace}{\mathcal{U}}
\newcommand{\Law}{\mathcal{L}}
\renewcommand{\P}{\mathbb{P}}
\newcommand{\weak}{\mathrm{w}}
\newcommand{\I}{\textrm{I}}
\newcommand{\II}{\textrm{II}}
\newcommand{\III}{\textrm{III}}
\newcommand{\IV}{\textrm{IV}}
\newcommand{\VV}{\textrm{V}}
\newcommand{\VI}{\textrm{VI}}
\newcommand{\lb}{\langle}
\newcommand{\rb}{\rangle}
\newcommand{\wh}{\widehat}
\newcommand{\calL}{\mathcal{L}}
\g@addto@macro\bfseries{\boldmath}
\title[An extrapolation result in the variational setting]{An extrapolation result in the variational setting: improved regularity, compactness, and applications to quasilinear systems}
\author{Sebastian Bechtel}
\address{Delft Institute of Applied Mathematics, Delft University of Technology, P.O. Box 5031, 2600 GA Delft, The Netherlands}
\email{S.Bechtel@tudelft.nl}
\author{Mark Veraar}
\email{M.C.Veraar@tudelft.nl}
\subjclass[2020]{Primary: 60H15. Secondary: 35A01, 35B65, 35K59, 35K90, 47H05, 47J35.}
\thanks{The first author is supported by the Alexander von Humboldt foundation by a Feodor Lynen grant. The second author is supported by the VICI subsidy VI.C.212.027 of the Netherlands Organisation for Scientific Research (NWO)}
\date{\today}
\dedicatory{}
\keywords{Stochastic evolution equations, variational methods, compactness, tightness, stochastic partial differential equations, quasi- and semi-linear, coercivity, extrapolation, Sneiberg's lemma}
\begin{document}
	\begin{abstract}
In this paper we consider the variational setting for SPDE on a Gelfand triple $(V, H, V^*)$. Under the standard conditions on a linear coercive pair $(A,B)$, and a symmetry condition on $A$ we manage to extrapolate the classical $\L^2$-estimates in time to $\L^p$-estimates for some $p>2$ without any further conditions on $(A,B)$. As a consequence we obtain several other a priori regularity results of the paths of the solution.

Under the assumption that $V$ embeds compactly into $H$, we derive a universal compactness result quantifying over all $(A,B)$.
As an application of the compactness result we prove global existence of weak solutions to a system of second order quasi-linear equations.
    \end{abstract}
	\maketitle

	\section{Introduction}
	\label{sec:intro}

In this paper we consider stochastic evolution equations in the variational setting. This is the stochastic version of Lions classical setting \cite{Lio69} and goes back to the work of \cite{bensoussan_equations_1972, KR79, Pardoux}. For details we refer the reader to the monographs \cite{LR,Rozov}.

This paper consists of two main parts. In the first part we prove an extrapolation result, which
improves the usual a priori regularity estimates for linear equations with operators that satisfy the usual coercivity conditions. As a consequence we derive a new type of compactness result for linear equations, which is universal in the sense that for given constants in the estimates it varies over all admissible operators and data.
In the second part we apply the compactness result to obtain global existence for a class of stochastic parabolic systems which is not well-understood yet.

The triple $(V, H, V^*)$ are Hilbert spaces such that $V\hookrightarrow H\hookrightarrow V^*$ densely, where we identify the Hilbert space $H$
with its dual, and where $V^*$ is the dual with respect to the inner product of $H$. The stochastic equations we consider can be written as:
\begin{equation}
\label{eq:SEEintro}
\tag{LP}
\left\{
\begin{aligned}
        \d u(t) + A(t)u(t) \, \d t  & = f(t)\, \d t + (B(t)u(t) + g(t))\, \d W(t),
\\ u(0) &= u_0.
\end{aligned}
\right.
\end{equation}
Here $A:(0,T)\times\Omega\to \calL(V, V^*)$ and $B:(0,T)\times\Omega\to \calL(V, \calL_2(U, H))$, and $W$ is a $U$-cylindrical Brownian motion, where $U$ is a real separable Hilbert space. The time $T>0$ is finite. For the inhomogeneities $(f,g,u_0)$ we assume $f:(0,T)\times\Omega\to V^*$, $g:(0,T)\times\Omega\to \calL_2(U,H)$ and $u_0:\Omega\to H$, and the standard measurability conditions are supposed to be satisfied.

Consider the following condition on the pair $(A,B)$.
\begin{ass}\label{ass:ABintro}
There exist constants $\Lambda, \lambda > 0$ and $M \geq 0$ such that pointwise in $[0,T]\times\Omega$ for all $v\in V$,
		\begin{align*}
			\Re\, \lb A v, v \rb - \frac{1}{2} \| Bv \|_{\HS(U,H)}^2 &\geq \lambda \| v \|_V^2 - M \| v \|_H^2 & \text{(coercivity),}
\\			\| Av \|_{V^*} \leq \Lambda \| v \|_V \quad & \& \quad \| Bv \|_{\HS(U,H)} \leq \Lambda \| v \|_V  & \text{(boundedness)}.
		\end{align*}
\end{ass}

Under Assumption \ref{ass:ABintro} it is standard that \eqref{eq:SEEintro} has a unique strong solution $u\in \L^2(0,T;V)\cap \Cont([0,T];H)$ a.s.\ and the following a priori estimate holds for a constant $C$ only depending on $T\vee 1$, $\lambda$, $\Lambda$, and $M$:
\begin{align}\label{eq:classicalvarintro}
\begin{aligned}
\|u\|_{\L^2(\Omega\times(0,T);V)}+  \|u\|_{\L^2(\Omega;\Cont([0,T];H))} \leq   C \big[\|u_0\|_{\L^2(\Omega;H)} &+ \|f\|_{\L^2(\Omega\times(0,T);V^*)} \\ & +\|g\|_{\L^2(\Omega\times(0,T);\calL_2(U,H))}\big].
\end{aligned}
\end{align}

In the deterministic setting it follows from \cite{DiElRe} that one can find some $p_0>2$ depending on $T\vee 1$, $\lambda$, $\Lambda$, and $M$ such that for every $p\in [2, p_0]$ there exists a constant $C_p$ such that
\begin{align}\label{eq:Lpdet}
\|u\|_{\L^p(0,T;V)} + \|u'\|_{\L^p(0,T;V^*)}\leq  C_p \big[\|u_0\|_{(H,V)_{1-\nicefrac{2}{p}}} &+ \|f\|_{\L^p(0,T;V^*)}\big].
\end{align}
Actually this even holds true outside the setting of Hilbert space and variational problems. On the other hand, \eqref{eq:Lpdet} does not hold for all $p\in (2, \infty)$.
In \cite{DiElRe} the estimate \eqref{eq:Lpdet} is used to obtain a compactness result, which in turn is used to prove global existence of solutions to a quasi-linear parabolic PDE.
On the other hand, if $A$ does not depend on time, then \eqref{eq:Lpdet} holds for all $p\in (1, \infty)$ (see \cite[Theorem 17.2.26~(4)]{HNVW3}). A stochastic version of an extrapolation result holds if $B = 0$ (or small), and was proved in \cite{LoristVeraar}. In the general case $B\neq 0$ the result is false since the $\L^p(\Omega)$-integrability can fail to hold (see \cite{BrzVer}) and a more restrictive coercivity condition is needed (see \cite{GHV}). However, it would be interesting to obtain some result with mixed integrability $\L^2(\Omega;\L^p(0,T;V))$ in case of general $(A,B)$ satisfying Assumption \ref{ass:ABintro} and any $p\in [2, \infty)$.

To prove \eqref{eq:Lpdet} one can use Sneiberg's lemma for complex interpolation. In the latter argument one needs an operator $T\in \calL(X_i,Y_i)$, where $(X_0, X_1)$ and $(Y_0, Y_1)$ are Banach couples. By complex interpolation also $T\in \calL(X_{\theta},Y_{\theta})$ for all $\theta\in (0,1)$. Sneiberg's result states that the interval of $\theta\in (0,1)$ for which $T$ is invertible is open. In the deterministic setting one can take $X_i = {}_{0}\W^{1,p_i}(0,T;V^*)\cap \L^{p_i}(0,T;V)$ and $Y_i = \L^{p_i}(0,T;V^*)$ with $1<p_0<2<p_1<\infty$ and $T u = u'+Au$, where ${}_{0}\W^{1,p_i}(0,T;V^*)$ is defined as the subspace of $\W^{1,p_i}(0,T;V^*)$ of function that vanish at zero. The boundedness of $T$ is trivial. Now if one takes $\theta\in (0,1)$ such that $\frac{1-\theta}{p_0} + \frac{\theta}{p_1} = \frac12$, then $T:X_{\theta}\to Y_{\theta}$ is invertible by \eqref{eq:Lpdet} for $p=2$ and with $u_0 = 0$. Thus, Sneiberg's lemma provides us with an open interval around this value of $\theta$, which yields the required result. As far as we could see it seems impossible to define an operator $T$ in the stochastic framework
so that the above steps can be completed.

\subsection*{New estimates for the linear problem}
In this paper we circumvent the above problem at the price of a symmetry condition on $A$, and prove the following result, where we emphasise that we do not need any regularity conditions on $(A,B)$.
\begin{theorem}[Extrapolation of integrability and regularity]\label{thm:introExtra}
Suppose that Assumption \ref{ass:ABintro} holds and that on $[0,T]\times \Omega$ and for $u,v\in V$ the
symmetry condition $\lb Au, v\rb = \lb Av, u\rb$ holds.
Then there exists $p_0>2$ only depending on $\Lambda, \lambda$  such that for all $p\in [2, p_0]$, $f\in \L^p_\cF(\Omega \times (0,T); V^*)$, $g\in \L^p_\cF(\Omega \times (0,T); \HS(U,H))$ and $u_0 \in \L^p_{\cF_0}(\Omega; (H,V)_{1-\nicefrac{2}{p},p})$, problem~\eqref{eq:SEEintro}
admits a unique solution $u \in \L^p(\Omega \times (0,T); V)$. Moreover, for $\delta\in (1/p,1/2)$ and $\theta\in [0,1/2)$ we have
\[u\in \L^p(\Omega; \Cont^{\delta-\nicefrac{1}{p}}([0,T]; [H, V]_{1-2\delta})),  \ \ \text{and} \ \ u\in \L^p(\Omega; \Cont([0,T]; (H, V)_{1-\nicefrac{2}{p},p})).\]
\end{theorem}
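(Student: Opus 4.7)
The plan is to split the solution as $u=v+w$, reducing the problem to a pathwise deterministic part (handled by \cite{DiElRe}) and a purely stochastic part (handled by a stochastic $\L^p$-maximal regularity estimate with a symmetric generator), and then close via an absorption argument. Once the $\L^p$-integrability of $u$ in $V$ is in hand, the path-regularity statements come from the factorization method and the trace theorem for maximal regularity spaces. Uniqueness is immediate from the $\L^2$-theory since $\L^p(\Omega\times(0,T);V)\subset\L^2(\Omega\times(0,T);V)$.

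Concretely, I would first take the $\L^2$-solution $u$ provided by the classical theory and decompose $u=v+w$, where $v(\cdot,\omega)$ solves pathwise the deterministic equation $v'+A(t,\omega)v=f(t,\omega)$, $v(0)=u_0(\omega)$, and $w:=u-v$ satisfies $\d w+A(t)w\d t=(B(t)u(t)+g(t))\d W(t)$ with $w(0)=0$. Applying \eqref{eq:Lpdet} pathwise (with constants depending only on $\lambda,\Lambda,M,T$) and integrating in $\omega$ yields the required $\L^p(\Omega\times(0,T);V)$ bound for $v$. For $w$ I would invoke a stochastic $\L^p$-maximal regularity estimate in which the symmetry of $A$ is essential: after the Tikhonov shift by $M$, the form $\lb A u,v\rb + M\lb u,v\rb_H$ is symmetric and coercive on $V\subset H$, so the pointwise operator is selfadjoint on $H$ with form domain $V$ and admits a bounded $\H^\infty$-calculus of angle $0$. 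Combining the bounds on $v$ and $w$, using $\|Bu\|_{\HS(U,H)}\le\Lambda\|u\|_V$ to control the driving noise of $w$ by $\|u\|_V$, and running a Gronwall / absorption argument on short time windows then closes the a priori estimate; existence follows by a standard Galerkin approximation.

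For the path regularity, I would apply the factorization method of Da~Prato--Kwapie\'n--Zabczyk to $\int_0^\cdot (Bu+g)\d W$: for $\delta\in(1/p,1/2)$ this yields the Hölder-$(\delta-\nicefrac{1}{p})$ estimate in $[H,V]_{1-2\delta}$, once the integrand $Bu+g$ has been placed in $\L^p(\Omega\times(0,T);\HS(U,H))$ using $\|Bu\|_{\HS}\le\Lambda\|u\|_V$ and the already-established $\L^p(\Omega\times(0,T);V)$-bound. Continuity in $(H,V)_{1-\nicefrac{2}{p},p}$ follows from the trace theorem for the stochastic maximal regularity space $\L^p(\Omega;\L^p(0,T;V)\cap \W^{\nicefrac{1}{2},p}(0,T;H))$.

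The main obstacle is the stochastic $\L^p$-maximal regularity step for $w$: standard results cover \emph{autonomous} symmetric $A$, whereas here $A$ is only measurable in $(t,\omega)$, and I need constants that are \emph{uniform} over all admissible $(A,B)$. As the introduction emphasises, the naive Sneiberg approach is blocked because there is no pathwise operator formulation of the stochastic Cauchy problem. The route I would try is a self-improving / higher-integrability argument in the spirit of Gehring--Meyer, applied to the symmetric quadratic form $\lb A u,u\rb$ viewed as an Itô functional: the symmetry ensures that Itô's formula produces an exact cancellation that reduces the analysis to a reverse-Hölder inequality on short time intervals, and from there Gehring's lemma transports the uniform form-coercivity into an $\L^p$-range $[2,p_0]$ with $p_0>2$ depending only on $\lambda$ and $\Lambda$.
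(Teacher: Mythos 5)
Your decomposition $u=v+w$, with $v$ solving the pathwise deterministic equation and $w$ carrying the noise, is not the route the paper takes, and as written it has a genuine gap that the paper's construction is specifically designed to avoid. The problem is the absorption step. Once you control $v$ via \eqref{eq:Lpdet} pathwise and try to bound $w$ by a stochastic maximal $\L^p$-regularity estimate for $A$, you arrive at something of the form
\begin{align}
\|u\|_{\L^p(\Omega\times(0,T);V)} \leq \|v\| + c_p\bigl(\Lambda(B)\|u\|_{\L^p(\Omega\times(0,T);V)} + \|g\|\bigr),
\end{align}
and to close you need $c_p\,\Lambda(B) < 1$. Nothing in Assumption \ref{ass:ABintro} makes this product small: the coercivity condition only gives $\Lambda(B)^2 \lesssim \Lambda(A)-\lambda$, and the constant $c_p$ in stochastic maximal $\L^p$-regularity for $A$ is in no way small. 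So the Gronwall/absorption as you describe it does not close. Moreover, the stochastic maximal $\L^p$-regularity input itself is not available: the self-adjointness and $\H^\infty$-calculus argument you invoke gives maximal regularity for a \emph{fixed} (autonomous) self-adjoint operator, whereas here $A(t,\omega)$ is only progressively measurable with no regularity in $t$, and you need constants that are uniform over all admissible $(A,B)$. This is exactly the obstruction the paper singles out. Your fallback — a Gehring--Meyer reverse Hölder argument using Itô's formula on $\lb Au,u\rb$ — is not actually carried out, and as a plan it looks problematic: Itô's formula for $\lb A(t)u(t),u(t)\rb$ requires some regularity of $t\mapsto A(t)$, which is not assumed, and it is unclear how to incorporate the $\Omega$-variable into a reverse Hölder/Gehring self-improvement scheme for the stochastic convolution.

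The paper's actual mechanism is different and is what makes the smallness issue disappear. The decomposition used there (Theorem \ref{thm:perturb}) puts the noise term $f\,\d t + g\,\d W$ with the \emph{autonomous reference operator} $\mu A_0$ (the Riesz isomorphism) into one piece, and the time-dependent perturbation $-(A-\mu A_0)u$ into a purely deterministic piece, and the absorption then only requires $C_p\mu^{-1}\|A-\mu A_0\|<1$ together with $c\Lambda(B)<1$ — genuine smallness conditions that do not hold for general $(A,B)$. The smallness is then manufactured via a complex family $A_z=\mu\bigl(F(z)(\mu^{-1}A-A_0)+A_0\bigr)$, $B_z=F(z)^{1/2}B$ with $F(z)=r\e^{z\log(R/r)}$, which is small at $z=\i t$ and equals $(A,B)$ at a specific interior point $\theta$. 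The symmetry $A=A^*$ is used precisely to show this complex family remains uniformly coercive on the strip (Lemmas \ref{lem:dist_A-B} and \ref{lem:elliptic}). Abstract Stein interpolation (Proposition \ref{prop:main}) between the Lions $\L^2$-estimate at $\Re z=0$ and the small-perturbation $\L^q$-estimate at $\Re z=1$ then yields an $\L^{p_0}$-estimate at $z=\theta$ for the original operators, with $p_0>2$ depending only on $\lambda$ and $\Lambda$. The analytic dependence of $z\mapsto u_z$ needed for Stein interpolation is established separately (Proposition \ref{prop:analytic}); this is an ingredient your proposal does not supply either. Finally, the path-regularity assertions in Theorem \ref{thm:main1} are obtained not by Da Prato--Kwapie\'n--Zabczyk factorization but by invoking \cite[Prop. 2.10, 3.8, 3.10]{AVloc} with $\widetilde A=A_0$, $\widetilde B=0$ as reference operators — conceptually similar to what you suggest, but different in execution.

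In short: your step ``handle $w$ by stochastic maximal $\L^p$-regularity and absorb'' fails because there is no smallness to absorb against, and the interpolation machinery (complex family + Stein interpolation + analytic dependence) is precisely what replaces it. You would need to supply an actual argument for the higher-integrability step that is uniform over all measurable-in-$(t,\omega)$ coercive symmetric $A$; the Gehring route, if it works at all in this stochastic setting, would require a substantial development that is not in the proposal.
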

In the above we used the notation $[\cdot, \cdot]_{\theta}$ and $(\cdot, \cdot)_{\theta,p}$ for complex and real interpolation, respectively. The main novelties are $\L^p$-integrability in $\Omega\times (0,T)$ and the improved smoothness of the paths, which is much better than the classical path space $\Cont([0,T];H)$ used in \eqref{eq:classicalvarintro}.

Theorem \ref{thm:introExtra} is a special case of Theorem \ref{thm:main1}, where additional linear estimates are stated as well. As mentioned, we cannot use Sneiberg's lemma
to prove Theorem \ref{thm:introExtra} . Instead we will use abstract Stein interpolation. The proof was inspired by the recent work \cite{BoEg} on $\L^p$-bounds for heat semigroups.

The symmetry condition can be written equivalently as $A^* = A$. We do not know if the symmetry condition is needed in Theorem \ref{thm:introExtra}. It is certainly not necessary. For many operators $A$ satisfying Assumption \ref{ass:ABintro}, one can always apply Theorem \ref{thm:introExtra} to its symmetric part $\frac{A+A^*}{2}$ and use a perturbation argument $A = \frac{A+A^*}{2} + R$ with $R = \frac{A-A^*}{2}$ (see \cite[Theorem 3.2]{AV21_SMR_torus}). The only thing which needs to be checked is the following relative boundedness condition: for every $\eps>0$ there exists a $C_{\varepsilon}$ such that
\[\|Rv\|_{V^*}\leq \varepsilon \|v\|_V + C_{\varepsilon}\|v\|_H.\]
For instance for $2m$-th elliptic differential operators with smooth coefficients in space the highest order differentiation cancels out and $R$ turns out to be of order $2m-1$, and thus it satisfies the above relative boundedness condition.

Theorem \ref{thm:introExtra} implies that the $\L^2$-theory in \cite{AVvar} can be extended to $\L^p$-theory for some range $p\in (2, p_0]$. Although this leads to minimal changes in the proof, it has many consequences for applications since for several problems it turns out that any $p>2$ is sufficient. For instance this occurs for SPDEs in dimension $d=2$. Due to the fact that taking for instance $H = \H^1$,  $V = \H^2$ and $V^* = \L^2$, the space $H$ does not embed into $\L^\infty$ as this instance of the Sobolev embedding fails. Using $\L^p$-theory instead one only needs that $[H,V]_{1-\nicefrac{2}{p}} = \H^{2-\nicefrac1p}$ embeds into $\L^\infty$, which is true in dimension $d=2$. We will investigate these applications in a subsequent paper.

\subsection*{Universal compactness}

Thanks to Theorem \ref{thm:introExtra} we obtain a compactness/tightness result for the solution mapping $(f,g,u_0)\mapsto u$ for \eqref{eq:SEEintro} for the path space $\Cont([0,T];H)$. We expect that this has many consequences. In earlier works tightness of laws is obtained with $H$ replaced by $V^*$ or even larger spaces for different but related settings (cf.~\cite{DHV} and \cite{rockner2022well}).  Tightness plays a key role in the stochastic compactness method (see \cite{Hofmanovaetal}) for obtaining weak solutions. Using a smaller path space can give more information on the weak solution and its approximation.

\begin{theorem}[Universal compactness for variational problems]\label{thm:varcomp}
Suppose that the embedding $V\hookrightarrow H$ is compact. Let $\lambda,\Lambda,T>0$, $M\geq 0$ and $K> 0$ be fixed. Then there exists a $p_0>2$ only depending on $\lambda,\Lambda,T\vee 1$ such that for all $p\in (2, p_0]$, the laws $\{\mathscr{L}(u):u\}$ on $C([0,T];H)$ are tight, where $u$ runs over all strong solutions to \eqref{eq:SEEintro} and all $(A,B)$ satisfying Assumption \ref{ass:ABintro} with $A = A^*$, and all $f\in \L^p_\cF(\Omega \times (0,T); V^*)$, $g\in \L^p_\cF(\Omega \times (0,T); \HS(U,H))$ and $u_0 \in \L^p_{\cF_0}(\Omega; (H,V)_{1-\nicefrac{2}{p},p})$ satisfying $\| f \| \leq K$, $\| g \| \leq K$ and $\| u_0 \| \leq K$ in the respective norms.
\end{theorem}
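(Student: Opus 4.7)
The strategy is to convert the improved integrability/regularity from Theorem~\ref{thm:introExtra} into uniform-in-$(A,B)$ tightness on $C([0,T];H)$ by combining Markov's inequality with the Arzelà--Ascoli theorem. Fix any $p\in(2,p_0]$, where $p_0>2$ is the value furnished by Theorem~\ref{thm:introExtra} (which depends only on $\lambda,\Lambda$ and hence is uniform over the admissible class of pairs $(A,B)$). Choose any $\delta\in(1/p,1/2)$, set the Hölder exponent $\alpha:=\delta-1/p>0$, and the target space $X:=[H,V]_{1-2\delta}$.

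\textbf{Step 1: Uniform a priori bound.} For every admissible $(A,B,f,g,u_0)$, the symmetry condition $A=A^*$ allows Theorem~\ref{thm:introExtra} (or its quantitative companion, Theorem~\ref{thm:main1}) to provide one constant $C=C(p,\lambda,\Lambda,M,T)$, independent of the particular choice within the admissible class, such that
\[
\|u\|_{L^p(\Omega;C^\alpha([0,T];X))} \leq C\bigl(\|u_0\|_{L^p(\Omega;(H,V)_{1-\nicefrac{2}{p},p})} + \|f\|_{L^p(\Omega\times(0,T);V^*)} + \|g\|_{L^p(\Omega\times(0,T);\HS(U,H))}\bigr) \leq 3CK,
\]
where the last inequality uses the hypothesis $\|u_0\|,\|f\|,\|g\|\leq K$.

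\textbf{Step 2: Compact auxiliary embedding.} Because $V\hookrightarrow H$ is compact and the spaces are Hilbertian, the interpolation inclusion $X=[H,V]_{1-2\delta}\hookrightarrow H$ is also compact; this is the standard interpolation of compact operators for Hilbert couples (alternatively, it follows from the spectral representation of the positive self-adjoint operator on $H$ whose form domain is $V$). Consequently, for each $R>0$ the set
\[
\mathcal{K}_R := \bigl\{ v \in C([0,T];H) : \|v\|_{C^\alpha([0,T];X)} \leq R \bigr\}
\]
is equicontinuous as a family of $H$-valued maps (since $\|v(t)-v(s)\|_H \lesssim \|v(t)-v(s)\|_X \leq R|t-s|^\alpha$) and pointwise contained in a fixed ball of $X$, which is relatively compact in $H$. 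By the vector-valued Arzelà--Ascoli theorem, $\mathcal{K}_R$ is relatively compact in $C([0,T];H)$.

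\textbf{Step 3: Tightness via Markov.} Markov's inequality and Step~1 give
\[
\P\bigl(u \notin \mathcal{K}_R\bigr) \leq R^{-p}\,\|u\|_{L^p(\Omega;C^\alpha([0,T];X))}^{p} \leq \Bigl(\frac{3CK}{R}\Bigr)^{p},
\]
which tends to $0$ as $R\to\infty$ uniformly over all admissible $u$. This is exactly tightness of the family of laws on $C([0,T];H)$.

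\textbf{Main obstacle.} The heavy lifting is already done by Theorem~\ref{thm:introExtra}: it produces the crucial bound in a Hölder-in-time, $X$-valued path space with constants depending only on the quantified data. Once that is available, the tightness argument is standard bookkeeping. The only mildly delicate point is Step~2, i.e.\ citing compactness of the interpolation embedding, but this is a classical fact in the Hilbert setting and causes no real difficulty.
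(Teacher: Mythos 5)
Your proposal is correct and follows essentially the same route as the paper's proof: invoke the $\L^p(\Omega;\Cont^{\theta-\nicefrac{1}{p}}([0,T];[H,V]_{1-2\theta}))$ bound from Theorem~\ref{thm:main1}, apply Chebyshev/Markov, and conclude via compactness of the H\"older space into $\Cont([0,T];H)$ by the vector-valued Arzel\`a--Ascoli theorem. The only (minor) difference is that you spell out why the intermediate embedding $[H,V]_{1-2\delta}\hookrightarrow H$ is compact, a point the paper leaves implicit.
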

As before, the condition $A = A^*$ can be weakened by a perturbation argument.

To illustrate the power of Theorem \ref{thm:varcomp} we will show that it can be effectively used in the stochastic compactness method to prove global existence of a system of quasi-linear SPDEs (see Theorem \ref{thm:quasi2} below). A technical issue in applying the stochastic compactness method is that the a.s.\ limit, obtained by tightness and the Skorohod embedding theorem, needs to be identified as a solution. The following advantages turn up as a consequence of the uniform estimate we obtain in the proof of Theorem \ref{thm:introExtra}:
\begin{enumerate}[(1)]
\item Tightness in $\Cont([0,T];H)$ (if $V\hookrightarrow H$ is compact);
\item uniform estimates in $\L^p(\Omega; \Cont([0,T];H))$;
\item uniform estimates in $\L^p(\Omega\times (0,T);V)$.
\end{enumerate}
Here we let $u$ run over all solutions with data $(A,B,f,g,u_0)$ as in Theorem \ref{thm:varcomp}. As a consequence of (1) we obtain subsequences which converge in $\Cont([0,T];H)$. As a consequence of (2) and (3) we obtain weak compactness in $\L^p(\Omega\times (0,T);V)$ and uniform integrability of $\{\|u\|_{\Cont([0,T];H)}^2: u\}$ and $\{\|u\|_{\L^2(0,T;V)}^2: u\}$, where $u$ runs over all strong solutions for the given data $(A,B,f,g,u_0)$
The uniform integrability can be effectively combined with Vitali's convergence theorem.

\subsection*{Application to a quasi-linear system of SPDEs}
On an open and bounded set $\Dom\subseteq \R^d$ consider the quasilinear system
	\begin{align}
		\label{eq:qlintro}
		\tag{QLP}
		\left\{\quad
		\begin{aligned}
			\d u^\alpha  &= \big[ \partial_i (a^{\alpha\beta}_{ij}(u) \partial_j u^\beta) + \partial_i \Phi^\alpha_i(u) +\phi^\alpha(u) \big] \d t
			\\  & \qquad \qquad
			+ \sum_{n\geq 1}  \big[ b^{\alpha\beta}_{n,j}(u^\beta) \partial_j u^\beta + g^\alpha_{n}(u) \big] \d w_n,\ \ &\text{ on }\Dom,\\
			u^\alpha &= 0, \ \ &\text{ on }\partial\Dom,
			\\
			u^\alpha(0)&=u^\alpha_{0},\ \ &\text{ on }\Dom.
		\end{aligned}\right.
	\end{align}

We state a special case of Theorem \ref{thm:quasi2}.
 \begin{theorem}[Weak existence for quasi-linear systems]\label{thm:quasi2intro}
		Let $h > 1$. Suppose Assumption~\ref{ass:ql}. Then there is $p_0 > 2$, depending on $T\vee1$, $\lambda$, $\Lambda$, $q$ and $C$ from the assumption such that, if $p\in (2, p_0]$ and $q\in (ph, \infty)$, then, given $u_0\in \L^{p}_{\cF_0}(\Omega;\B^{1-\nicefrac2p}_{2,p,0}(\Dom))\cap \L^q_{\cF_0}(\Omega \times \Dom)$, there exists a weak solution $(\wt{u}, \wt{W}, \wt{\Omega}, \wt{\cF}, \wt{\P}, (\wt{\cF}_t)_{t\geq 0})$ of \eqref{eq:qlintro}.
	\end{theorem}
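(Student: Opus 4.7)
The plan is to follow the stochastic compactness method, with Theorems \ref{thm:introExtra} and \ref{thm:varcomp} providing the crucial linear inputs. Work in the Gelfand triple $V = \H^1_0(\Dom)^N \subset H = \L^2(\Dom)^N \subset V^*$, where $N$ is the number of components. First, construct an approximating sequence $(u_n)$ whose elements fit the linear setting \eqref{eq:SEEintro}: truncate (and, if convenient, mollify) the nonlinear coefficients $a,b,\Phi,\phi,g$ at level $n$, combined with a Galerkin projection if needed, so that the frozen pair $(A_n, B_n)$ is linear, symmetric, coercive and satisfies Assumption \ref{ass:ABintro} with constants independent of $n$ thanks to Assumption \ref{ass:ql}. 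Classical variational $\L^2$-theory then produces $u_n$ globally on $[0,T]$.

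Next, apply Theorem \ref{thm:introExtra} to extract a uniform bound
\[
\sup_n \bigl( \|u_n\|_{\L^p(\Omega\times(0,T); V)} + \|u_n\|_{\L^p(\Omega; \Cont([0,T]; (H,V)_{1-\nicefrac{2}{p},p}))} \bigr) < \infty
\]
for some $p \in (2, p_0]$. Because the nonlinear coefficients carry growth of order $h$, an independent uniform $\L^q(\Omega\times(0,T)\times\Dom)$-bound is also needed; this can be obtained by an It\^o-formula argument applied to $\|u_n\|_{\L^q(\Dom)}^q$, exploiting the $\L^q$-moment of $u_0$ together with the coercivity/growth hypotheses in Assumption~\ref{ass:ql}. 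The condition $q > p h$ then guarantees that compositions of the form $a(u_n)\nabla u_n$ and $b(u_n)\nabla u_n$ are uniformly bounded in $\L^p(\Omega\times(0,T);\L^2(\Dom))$ and $\L^p(\Omega\times(0,T);\HS)$ respectively.

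The universal compactness result Theorem \ref{thm:varcomp} then yields tightness of $\{\mathscr{L}(u_n)\}$ on $\Cont([0,T]; H)$. Joined with the trivial tightness of the law of $W$ on an appropriate path space and weak compactness of the stochastic integrands, Skorohod's representation theorem (in the Jakubowski extension if necessary) produces a new probability space $(\wt\Omega, \wt\cF, \wt\P)$ carrying copies $(\wt u_n, \wt W_n)$ with the same joint laws as $(u_n, W)$ and converging almost surely to a limit $(\wt u, \wt W)$, where $\wt W$ is a cylindrical Brownian motion with respect to the augmented filtration generated by $(\wt u, \wt W)$.

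Finally, identify $\wt u$ as a weak solution of \eqref{eq:qlintro}. The uniform $\L^p$-bound from Theorem \ref{thm:introExtra} together with the $\L^q$-moment bound delivers uniform integrability, which combined with almost sure convergence in $\Cont([0,T]; H)$ and continuity of the coefficients permits passage to the limit in the drift terms via Vitali's convergence theorem. I expect the principal obstacle to be identification of the stochastic integral: the integrands $b^{\alpha\beta}_{n,j}(\wt u_m^\beta)\partial_j \wt u_m^\beta$ converge only weakly in $\L^2(\wt\Omega\times(0,T); \HS)$, which is generally insufficient for continuity of the It\^o integral. The standard resolution is a martingale characterisation (as in \cite{Hofmanovaetal}): here the $\L^p$-integrability supplied by Theorem \ref{thm:introExtra} — rather than the classical $\L^2$-estimate \eqref{eq:classicalvarintro} — is exactly what is needed to ensure uniform integrability of the quadratic variation terms, allowing the martingale property to survive passage to the limit and yielding that $(\wt u, \wt W, \wt\Omega, \wt\cF, \wt\P, (\wt\cF_t))$ is a weak solution.
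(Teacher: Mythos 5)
Your proposal follows the same overall scheme as the paper: approximate the nonlinearities (the paper first mollifies $a,b$ and proves a Lipschitz version, then truncates $\Phi,\phi$ to reduce the general case to the Lipschitz one, without any Galerkin projection --- it uses the quasilinear well-posedness of \cite{AVvar} directly), obtain uniform $\L^p$-bounds by freezing the coefficients and invoking Theorem~\ref{thm:introExtra}, obtain uniform $\L^q$-bounds by an It\^o-formula argument applied to approximants $\psi_m$ of $|\cdot|^q$ (so your outline is accurate; a minor correction is that $q>ph$ is needed to control $\phi(u),\Phi(u)$ in $\L^p(\Omega\times(0,T);\L^2(\Dom))$, not $a(u)\nabla u$ or $b(u)\nabla u$, which are bounded simply by uniform boundedness of $a,b$), then tightness, Skorohod, and identification via the martingale method.

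There is, however, a genuine gap in your resolution of the stochastic integral identification, which you correctly recognise as the main obstacle. Uniform integrability of the quadratic variation terms, granted by the $\L^p$-bounds, is not by itself sufficient: the martingale characterisation still requires \emph{almost sure} convergence of
\begin{align}
\int_0^t \lb b^{\alpha\beta}_{n,j}(\wt u_m^\beta)\,\partial_j \wt u_m^\beta, \xi\rb^2 \d r ,
\end{align}
which is a product of two merely weakly convergent sequences. Vitali upgrades a.s.\ convergence to $\L^1(\wt\Omega)$-convergence, but it does not produce a.s.\ convergence from weak convergence. The paper's resolution is the antiderivative device: with $\bar b_{n,j}^{\alpha\beta}(\cdot,y)=\int_0^y b_{n,j}^{\alpha\beta}(\cdot,z)\d z$ one has $b_{n,j}^{\alpha\beta}(\wt u^\beta)\partial_j\wt u^\beta = \partial_j\bar b^{\alpha\beta}_{n,j}(\wt u^\beta) - \Theta^{\alpha\beta}_{n,j}(\cdot,\wt u^\beta)$, so after integrating by parts onto $\xi$ the \emph{strong} convergence of $\wt u_m$ in $\Cont([0,T];\L^2(\Dom))$ suffices. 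This hinges crucially on the structural hypothesis in Assumption~\ref{ass:ql} that $b^{\alpha\beta}_{n,j}$ depends only on the single component $u^\beta$ (together with the diagonal structure); the authors explicitly stress that this is the reason full nonlinear coupling in $b$ cannot be allowed. Without some replacement for this mechanism your argument does not close at the identification step.
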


Besides that we are able to treat highly coupled systems, even in the case of scalar quasilinear equations the result of Theorem \ref{thm:quasi2intro} contains new features:
\begin{enumerate}[(a)]
\item less regularity on the initial data is required;
\item equations on domains with boundary condition can be considered;
\item gradient/transport noise terms are included.
\end{enumerate}

Often when applying the stochastic compactness method only the torus is considered for simplicity. Actually, equations on domains with for instance Dirichlet boundary condition can lead to complications:
\begin{itemize}
\item Bootstrapping regularity can be problematic due the presence of boundary values;
\item It is often unclear in what extrapolation space to formulate compactness/tightness.
\end{itemize}

\subsection*{Open problems}
Given the well-posedness theory and the deterministic extrapolation results it is natural to state the following open problem.
\begin{problem}
Do Theorems \ref{thm:introExtra} and \ref{thm:varcomp} hold without the symmetry condition $A^* = A$ ?
\end{problem}

The well-posedness theory of Lions is usually stated in the setting of nonlinear operators $(A,B)$ satisfying a continuity, monotonicity and coercivity condition. We covered only
the linear setting in Theorems \ref{thm:introExtra} and \ref{thm:varcomp}. This leads to the following natural question.
\begin{problem}
Do Theorems \ref{thm:introExtra} and \ref{thm:varcomp} hold in the setting of nonlinear monotone operators $(A,B)$ ?
\end{problem}
This problem seems to be open even in the deterministic setting.

\subsection*{Organization of the paper}

In Section \ref{sec:main_result1} we present the precise assumptions and state a more general version of Theorem \ref{thm:introExtra}, and we derive Theorem \ref{thm:varcomp} from it. In Section~\ref{sec:L2} we present a result on analytic dependency
of solutions on the equation.
This will be needed later on in order to apply Stein interpolation. In Section \ref{sec:Lp} we discuss a case in which we have $\L^p(\Omega\times (0,T);V)$-estimates for all $p\in [2, \infty)$, which is used as one of the endpoints in the Stein interpolation; the other endpoint is $\L^2(\Omega\times (0,T);V)$.  Finally, in Section \ref{sec:ext} we prove the main result Theorem~\ref{thm:main1}, a more general version of Theorem~\ref{thm:introExtra}.

The main weak existence result on quasi-linear systems is stated in Section \ref{sec:main_result2}. First, a proof in the case of Lipschitz nonlinearities will be given in Section \ref{sec:ql_lip} via Theorem \ref{thm:varcomp} and the stochastic compactness method. This intermediate result has its own value as it imposes even less structural assumptions. The general case is covered in Section \ref{sec:ql_growing}, again via an a priori estimate in $\L^q$ and the stochastic compactness method.

\subsection*{Notation}

\begin{itemize}
\item The notation $a \lesssim_P b$ means that there is a constant $C$ only depending on the parameter $P$ such that $a\leq C b$.

\item $(V, H, V^*)$ is the notation for the Gelfand triple of Hilbert spaces.
\item $(\cdot,\cdot)_{\theta,p}$ stands for real interpolation.
\item $[\cdot,\cdot]_\theta$ stands for complex interpolation.
\item $W$ denotes a cylindrical Brownian motion on the real separable Hilbert space $U$.
\item $\calL_2$ denotes the Hilbert--Schmidt operators.
\end{itemize}
For further unexplained notation the reader is referred to \cite{AVvar,HNVW1,LR}.

\subsubsection*{Acknowledgements}
The authors thank Max Sauerbrey for useful discussions on the stochastic compactness method and the referee for helpful comments and careful reading.

	\section{Main extrapolation result for linear equations}

	\label{sec:main_result1}

	We consider a linear stochastic partial differential equation of the form
	\begin{align}
		\label{eq:sto_heat}
		\tag{P}
		\d u(t) + A(t)u(t) \d t = f(t) \d t + (B(t) u(t) + g(t)) \d W(t), \quad u(0) = u_0.
	\end{align}

	Let us make precise the setting and assumptions for this part.
	\begin{ass}
		\label{ass:SP}
		Fix $0<T<\infty$. Let $U$ be a real separable Hilbert space, and $H$ and $V$ be complex separable Hilbert spaces, where $V \subseteq H$ continuously and densely. Fix a probability space $\Omega$ with filtration $\cF = (\cF_t)_{0 \leq t\leq T}$ and let $W$ be a real-valued $U$-cylindrical Brownian motion adapted to $\cF$. Consider $A\colon \Omega \times (0,T) \to \cL(V, V^*)$ and $B\colon \Omega \times (0,T) \to \cL(V, \HS(U,H))$, where $\HS(U,H)$ denotes the space of Hilbert Schmidt operators from $U$ to $H$, both progressively measurable. Suppose that there exist $\Lambda(A), \Lambda(B), \lambda > 0$ and $M \geq 0$ such that pointwise on $[0,T]\times \Omega$ and $v\in V$ one has
		\begin{align}
			\label{eq:coerc}
			\Re\, \lb Av, v \rb - \frac{1}{2} \| Bv \|_{\HS(U,H)}^2 \geq \lambda \| v \|_V^2 - M \| v \|_H^2,
		\end{align}
		and
		\begin{align}
			\| Av \|_{V^*} \leq \Lambda(A) \| v \|_V \quad \& \quad \| Bv \|_{\HS(U,H)} \leq \Lambda(B) \| v \|_V.
		\end{align}
		Put $\Lambda = \Lambda(A) + \Lambda(B)$.
	\end{ass}

	The main result of this part is the following.

	\begin{theorem}
		\label{thm:main1}
		Suppose that Assumption~\ref{ass:SP} holds and $\lb Au, v \rb = \lb Av, u\rb $ pointwise in $[0,T]\times \Omega$. Then there exists a $p_0>2$ such that, for all $p\in [2, p_0]$, $f\in \L^p_\cF(\Omega \times (0,T); V^*)$, $g\in \L^p_\cF(\Omega \times (0,T); \HS(U,H))$ and $u_0 \in \L^p_{\cF_0}(\Omega; (H,V)_{1-\nicefrac{2}{p},p})$, problem~\eqref{eq:sto_heat} admits a unique solution $u \in \L^p(\Omega \times (0,T); V)$, and for any $\theta \in [0,\nicefrac{1}{2})$ there is a constant $K_\theta$ depending only on $\Lambda(A)$, $\Lambda(B)$, $\lambda$, $M$, $T\vee 1$, $\theta$ such that
		\begin{align}
			\| u \|_{\L^p(\Omega; \H^{\theta,p}(0,T; [H, V]_{1-2\theta}))} \leq K_\theta C_{u_0,f,g},
		\end{align}
		where
		\begin{align}
			C_{u_0,f,g} = \| u_0 \|_{\L^p(\Omega; (H,V)_{1-\nicefrac{2}{p},p})} + \| f \|_{\L^p(\Omega \times (0,T); V^*)} + \| g \|_{\L^p(\Omega \times (0,T); \HS(U,H))}.
		\end{align}
		Also, there is a constant $K_0$ depending only on $\Lambda(A)$, $\Lambda(B)$, $\lambda$, $M$, $T\vee 1$, and $p$ such that
		\begin{align}
			\| u \|_{\L^p(\Omega; \Cont([0,T]; (H, V)_{1-\nicefrac{2}{p},p}))} \leq K_0 C_{u_0,f,g}.
		\end{align}
		Moreover, for $\theta \in (\nicefrac{1}{p},\nicefrac{1}{2})$, there is a constant $K_\theta$ depending only on $\Lambda(A)$, $\Lambda(B)$, $\lambda$, $T\vee 1$, and $\theta$ such that
		\begin{align}
			\| u \|_{\L^p(\Omega; \Cont^{\theta-\nicefrac{1}{p}}([0,T]; [H, V]_{1-2\theta}))} &\leq K_\theta C_{u_0,f,g}.
		\end{align}
	\end{theorem}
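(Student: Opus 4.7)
The plan is to establish the main $\L^p(\Omega\times(0,T);V)$-estimate by abstract Stein interpolation, as announced in the introduction, and then to derive the auxiliary regularity statements from it. The two endpoints of the interpolation are the classical variational estimate \eqref{eq:classicalvarintro} (which handles $p=2$ under Assumption \ref{ass:SP}) and an $\L^q(\Omega\times(0,T);V)$-estimate valid for all $q\in[2,\infty)$ but only in a restricted situation---for instance when $A$ is time-independent so that deterministic $\L^p$-maximal regularity applies and the noise term is controlled by stochastic maximal $\L^p$-regularity. This second endpoint is exactly what Section~\ref{sec:Lp} supplies, with constants depending on $q$ only through the structural parameters $\lambda,\Lambda,M$ and $T\vee 1$.

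Next I would connect the two endpoints by an analytic family. Here the hypothesis $A=A^*$ is pivotal: it makes (a suitable shift of) $A$ a non-negative self-adjoint operator $V\to V^*$, whose imaginary powers and functional calculus produce bounded transformations on the scale $[H,V]_\theta$. I would construct a family $\{(A_z,B_z)\}_{z\in S}$ on the closed strip $S=\{0\le\Re z\le 1\}$ such that $(A_0,B_0)=(A,B)$, the pair $(A_1,B_1)$ falls into the favourable endpoint class of Section~\ref{sec:Lp}, the coercivity and boundedness constants remain uniformly controlled on the vertical boundaries of $S$, and the parametrised solution $z\mapsto u_z\in \L^2(\Omega\times(0,T);V)$ depends analytically on $z$ for each fixed data $(u_0,f,g)$; the latter property is precisely the content of Section~\ref{sec:L2}. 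Stein's interpolation theorem then produces an open interval $[2,p_0]$ of indices together with the base estimate $\|u\|_{\L^p(\Omega\times(0,T);V)}\lesssim C_{u_0,f,g}$, where the implicit constant depends only on $\lambda,\Lambda,M,T\vee 1$ and $p$.

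With the base estimate in place, the remaining assertions are read off the equation. Writing $u(t)=u_0+\int_0^t(f-Au)\,ds+\int_0^t(Bu+g)\,dW$, the drift integrand lies in $\L^p(\Omega\times(0,T);V^*)$ and the diffusion integrand in $\L^p(\Omega\times(0,T);\HS(U,H))$; stochastic maximal $\L^p$-regularity yields $\H^{\theta,p}$-in-time regularity with values in $[H,V]_{1-2\theta}$ for every $\theta\in[0,\nicefrac12)$, and the standard trace theorem for the mixed-derivative space $\H^{1,p}(0,T;V^*)\cap\L^p(0,T;V)$ gives continuity into the real interpolation space $(H,V)_{1-\nicefrac2p,p}$. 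The H\"older estimate for $\theta\in(\nicefrac1p,\nicefrac12)$ follows by combining the previous $\H^{\theta,p}$-estimate with the pathwise Sobolev embedding $\H^{\theta,p}(0,T;X)\hookrightarrow\Cont^{\theta-\nicefrac1p}([0,T];X)$ applied with $X=[H,V]_{1-2\theta}$.

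The main obstacle is the construction underlying the second paragraph: assembling an analytic family $\{(A_z,B_z)\}_{z\in S}$ that simultaneously (i) covers $(A,B)$ at $\Re z=0$ and the favourable reference pair at $\Re z=1$, (ii) keeps the coercivity and boundedness constants uniform on the strip, (iii) interacts correctly with the It\^o integral through the $Bu$-term, and (iv) yields analyticity of the parametrised $\L^2$-solution. The symmetry $A=A^*$ is precisely the lever that makes such a holomorphic deformation possible; without it the functional-calculus machinery disappears and Stein interpolation becomes inapplicable, which is why the paper isolates this as the central open problem recorded after the theorem.
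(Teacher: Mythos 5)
The high-level strategy is the paper's --- Stein interpolation between the $\L^2$ bound of Proposition~\ref{prop:lions} and an $\L^q$ endpoint supplied by Section~\ref{sec:Lp}, with the analytic dependence from Section~\ref{sec:L2} feeding the hypotheses of abstract Stein interpolation, followed by a regularity bootstrap. But the specific construction of the analytic family you sketch does not close, and the failure is not cosmetic.

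You place the original pair at the boundary of the strip, asking that $(A_0,B_0)=(A,B)$ at $\Re z=0$ and that $(A_1,B_1)$ be the favourable reference pair at $\Re z=1$. Stein interpolation bounds $T_\theta$ on the interpolated scale only at \emph{interior} points $\theta\in(0,1)$; at $\Re z=0$ you learn nothing beyond what you assumed there. With your placement the interpolation produces $\L^{q_\theta}$ estimates for the family members $(A_\theta,B_\theta)$, $\theta\in(0,1)$, none of which is the original $(A,B)$. The paper avoids this by arranging for $(A,B)$ to appear at an interior parameter: with $F(z)=r\e^{z\log(R/r)}$ and $A_z=\mu\bigl(F(z)(\mu^{-1}A-A_0)+A_0\bigr)$, $B_z=F(z)^{1/2}B$ as in Definition~\ref{def:perturb}, one has $F(\theta_0)=1$ at $\theta_0=-\log r/\log(R/r)\in(0,1)$, so $A_{\theta_0}=A$ and $B_{\theta_0}=B$; the line $\Re z=0$, where $|F|=r$ is small, is the small-perturbation endpoint to which Theorem~\ref{thm:perturb} applies, and the line $\Re z=1$ is the $\L^2$ endpoint where $R(1-\rho)<1$ still guarantees coercivity.

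Relatedly, your appeal to imaginary powers and functional calculus of $A$ to build the family and to see where $A=A^*$ enters is off target. The paper's deformation is a plain linear holomorphic perturbation and needs no functional calculus of $A$ at all; the symmetry is exploited purely algebraically in Lemmas~\ref{lem:dist_A-B} and~\ref{lem:elliptic}: it forces $\mu^{-1}a[v]-\|v\|_V^2$ to be a real non-positive number, so that $\Re\bigl(F(z)(\mu^{-1}a[v]-\|v\|_V^2)\bigr)\geq |F(z)|(\mu^{-1}a[v]-\|v\|_V^2)$, and this (together with the choice of $r$ and $R$) is precisely what keeps the coercivity constant uniformly controlled over $\overline{S}$. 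Bounded imaginary powers of $A$ would be an extra hypothesis the theorem carefully avoids.

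Your derivation of the $\H^{\theta,p}$, $\Cont([0,T];(H,V)_{1-\nicefrac{2}{p},p})$, and H\"older estimates from the $\L^p(\Omega\times(0,T);V)$ bound is compatible with the paper's argument (regard $Au$ as part of the data, apply stochastic maximal regularity for the reference operator $A_0$, then a trace/Sobolev embedding; the paper cites the relevant results from~\cite{AVloc}), and you omit only the reduction to $M=0$ via $\e^{\lambda t}u(t)$. But the Stein step, as you have set it up, does not deliver an estimate for the original $(A,B)$, and that is where the proof must be repaired.
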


The symbol $\H^{\theta,p}$ denotes the (vector-valued) Bessel potential space (see \cite{HNVW1,HNVW3}), and the subscripts $\cF$ and $\cF_0$ refer to the subspaces of $\L^p$ of $\cF$-progressively measurable and $\cF_0$-adapted functions.

	\begin{remark}[Real problems]
		In virtue of complexification, the result does also apply to problems over real Hilbert spaces. We are going to use this fact later on.
	\end{remark}

As a consequence of Theorem \ref{thm:main1} we can already prove the compactness/tightness result of Theorem \ref{thm:varcomp}.
\begin{proof}[Proof of Theorem \ref{thm:varcomp}]
Let $S$ denote the set of solutions described in Theorem \ref{thm:varcomp}. Fix $\theta\in (1/p,1/2)$. By Theorem \ref{thm:main1}, $S$ is bounded in $\L^p(\Omega; \Cont^{\theta-\nicefrac{1}{p}}([0,T]; [H, V]_{1-2\theta}))$ by $K_{\theta} C_{u_0, f, g}$. Let $\varepsilon>0$.
Choose $R>0$
such that $K_{\theta}^p C_{u_0, f, g}^p R^{-p} \leq \varepsilon$. Then for all $u\in S$,
\[\P(\|u\|_{\Cont^{\theta-\nicefrac{1}{p}}([0,T]; [H, V]_{1-2\theta})}\geq R)\leq R^{-p}\mathbb{E}\|u\|_{\Cont^{\theta-\nicefrac{1}{p}}([0,T]; [H, V]_{1-2\theta})}^p\leq \varepsilon.\]
By the vector-valued Arz\'ela--Ascoli theorem \cite[Theorem III.3.1]{Lang}, the embedding
\[\Cont^{\theta-\nicefrac{1}{p}}([0,T]; [H, V]_{1-2\theta})\hookrightarrow \Cont([0,T];H)\]
is compact, and therefore $\{\calL(u):u\in S\}$ is tight.
\end{proof}

	\section{$\L^2$-estimates and analytic dependency}
	\label{sec:L2}

	The central finding of this section is that solutions to a complex family of variational problems have analytic dependence provided this was the case for the complex family, see Proposition~\ref{prop:analytic} and Corollary \ref{cor:analytic}. Before, we will briefly discuss well-posedness of~\eqref{eq:sto_heat} in a complex setting.

	To ease notation, we introduce data and solution spaces for the variational setting.

	\begin{definition}
		\label{def:spaces1}
		Define the data spaces $\E \coloneqq \L^2_\cF(\Omega \times (0,T); V^*)$ and $\Egam \coloneqq \L^2_\cF(\Omega \times (0,T); \HS(U,H))$, and the solution space $\V \coloneqq \L^2_\cF(\Omega \times (0,T); V) \cap \L^2_\cF(\Omega; \Cont([0,T];H))$, where the subscript $\cF$ indicates the subspace of progressively measurable functions.
	\end{definition}

	\subsection{A (complex) variational setting}
	\label{subsec:variation}

	The following variational well-posedness result is well-known in its real formulation~\cite{LR}. The complex version follows by \enquote{forgetting} the complex structure. For instance, we can interpret a complex vector space $H$ as a real vector space if we equip it with the inner product $(\cdot | \cdot)_{H_\R} = \Re (\cdot | \cdot)_H$ and so on.

	\begin{proposition}
		\label{prop:lions}
		Let $f\in \E$, $g\in \Egam$. Suppose Assumption~\ref{ass:SP} holds. Then~\eqref{eq:sto_heat} with $u_0 \in \L^2_{\cF_0}(\Omega;H)$ has a unique solution $u \in \V$, and there exists a constant $C_L$ depending on $\Lambda$, $\lambda$, $M$, and $T\vee 1$, such that
		\begin{align}
			\label{eq:lions}
			\| u \|_{\V} \leq C_L \bigl( \| u_0 \|_{\L^2(\Omega;H)} + \| f \|_{\E} + \| g \|_{\Egam} \bigr).
		\end{align}
	\end{proposition}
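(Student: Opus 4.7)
The plan is to reduce to the classical real-valued variational well-posedness theorem, as found in~\cite[Ch.~4]{LR}. The key observation is that every complex separable Hilbert space $K$ may be viewed as a real separable Hilbert space $K_\R$ by forgetting complex scalar multiplication and equipping the underlying real vector space with the inner product $\Re (\cdot, \cdot)_K$. The induced norm coincides with the complex one, so the continuous dense embedding $V \hookrightarrow H$ transfers verbatim to $V_\R \hookrightarrow H_\R$, the Riesz identification $H \simeq H^*$ gives $H_\R \simeq (H_\R)^*$, and $(V_\R, H_\R, (V_\R)^*)$ becomes a real Gelfand triple whose duality pairing is $\Re \lb \cdot, \cdot \rb$.

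Next I would verify that Assumption~\ref{ass:SP} provides exactly the hypotheses of the real variational setting. Any $A(t,\omega) \in \cL(V, V^*)$ is a bounded $\R$-linear map $V_\R \to (V_\R)^*$ with the same operator norm, and likewise for $B(t,\omega)$. The Hilbert--Schmidt norms coincide since $\sum_n \| Bv\, e_n \|_H^2 = \sum_n \| Bv\, e_n \|_{H_\R}^2$ for any orthonormal basis $(e_n)$ of the real space $U$. Because the coercivity condition~\eqref{eq:coerc} is already phrased through $\Re \lb Av, v \rb$, it is precisely the standard real coercivity with identical constants $\lambda$ and $M$. The $U$-cylindrical Brownian motion $W$ is real and $\cF$-adapted by hypothesis, so no modification of the noise or the stochastic integral is required.

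Finally, I would apply the classical real-valued variational existence and uniqueness theorem to obtain a unique progressively measurable $u \in \L^2_\cF(\Omega \times (0,T); V_\R) \cap \L^2_\cF(\Omega; \Cont([0,T]; H_\R))$ satisfying the a priori bound~\eqref{eq:lions}, with a constant $C_L$ depending only on $\Lambda$, $\lambda$, $M$, and $T \vee 1$, exactly as it does in the real-valued proof (an Itô formula plus Gronwall argument that uses only the quantitative inputs just identified). Because real and complex norms agree, this $u$ is the claimed solution in $\V$, and uniqueness transfers under the same identification. I do not expect any serious obstacle; the only point requiring diligence is checking that the operator and Hilbert--Schmidt norms survive complexification, which they do because they depend only on the common underlying modulus.
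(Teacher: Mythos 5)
Your proposal follows exactly the same route as the paper: the paper states that the real case is classical (citing~\cite{LR}) and that the complex case follows by forgetting the complex structure and equipping the spaces with the inner product $\Re(\cdot|\cdot)$, which is precisely the reduction you carry out. Your write-up is a more detailed elaboration of the same idea, and the points you flag (preservation of operator and Hilbert--Schmidt norms, coercivity already phrased via $\Re\lb\cdot,\cdot\rb$) are indeed the ones that need checking.
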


	\subsection{Analytic dependence of solutions}
	\label{subsec:analytic}

	In the following result we show analytic dependence of solutions.

	\begin{proposition}
		\label{prop:analytic}
		Let $O \subseteq \C$ be open and for $z\in O$ let $A_z$ and $B_z$ be operator functions that satisfy Assumption~\ref{ass:SP} uniformly in $z$ and that depend analytically on $z$ in the uniform operator topology. Furthermore, let $f \in \E$, $g \in \Egam$ and $u_0\in H$. Then, for fixed $z$, the unique solution $u_z$ of the problem
		\begin{align}
			\label{eq:analytic}
			\d u(t) + A_z(t)u(t) \d t = f(t) \d t + (B_z(t) u(t) + g(t)) \d W(t), \quad u(0) = u_0,
		\end{align}
		gives rise to an analytic function $O \ni z \mapsto u_z \in \V$.
	\end{proposition}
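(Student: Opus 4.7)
The plan is to show that $z\mapsto u_z$ is complex-differentiable at every point $z_0\in O$ as a map into the Banach space $\V$; by the standard theory of Banach-space-valued holomorphic functions, differentiability on an open subset of $\C$ implies analyticity, so this suffices. Fix $z_0\in O$ together with a closed disk $\{|z-z_0|\le r\}\subset O$. Thanks to the uniform-operator-topology analyticity of $z\mapsto A_z$ and $z\mapsto B_z$, the Cauchy integral formula (applied pointwise in $(t,\omega)$) produces operator derivatives $A'_{z_0}$ and $B'_{z_0}$ which are progressively measurable and whose operator norms are bounded uniformly in $(t,\omega)$; moreover the difference quotients $\tfrac{A_z-A_{z_0}}{z-z_0}$ and $\tfrac{B_z-B_{z_0}}{z-z_0}$ converge to them as $z\to z_0$ in the essential supremum over $[0,T]\times\Omega$ of the respective operator norm.

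Next, define the candidate derivative $v\in\V$ as the unique solution, furnished by Proposition~\ref{prop:lions}, of the variational problem
\begin{align}
\d v + A_{z_0} v\,\d t = -A'_{z_0} u_{z_0}\,\d t + \bigl(B_{z_0} v + B'_{z_0} u_{z_0}\bigr)\d W,\qquad v(0)=0.
\end{align}
Here $A'_{z_0} u_{z_0}\in\E$ and $B'_{z_0} u_{z_0}\in\Egam$, thanks to the uniform boundedness of the derivatives together with $u_{z_0}\in\L^2_\cF(\Omega\times(0,T);V)$. Subtracting the equations for $u_z$ and $u_{z_0}$, dividing by $z-z_0$, and subtracting the equation for $v$, a direct computation shows that
\begin{align}
R_z\coloneqq \tfrac{u_z-u_{z_0}}{z-z_0}-v
\end{align}
solves a variational problem of type~\eqref{eq:analytic} with coefficients $(A_z,B_z)$, zero initial condition, and inhomogeneities (in $\E$ and $\Egam$) built from the quantities $A_z-A_{z_0}$ and $\tfrac{A_z-A_{z_0}}{z-z_0}-A'_{z_0}$ (and their $B$-analogues), multiplied either by $v$ or by $u_{z_0}$.

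To finish, apply Proposition~\ref{prop:lions} to the problem for $R_z$: since Assumption~\ref{ass:SP} holds uniformly in $z$, the constant in the estimate is independent of $z$ on the closed disk, and therefore $\|R_z\|_\V$ is bounded by a fixed constant times the sum of the $\E$- and $\Egam$-norms of the inhomogeneities. Combining this with the uniform-in-$(t,\omega)$ convergence of the difference quotients and with $v,u_{z_0}\in \L^2_\cF(\Omega\times(0,T);V)$ yields $\|R_z\|_\V\to 0$ as $z\to z_0$, giving complex differentiability at $z_0$ with derivative $v$. The principal technical point is to verify the uniform-in-$(t,\omega)$ convergence of the operator difference quotients to $A'_{z_0}$ and $B'_{z_0}$; this is a Cauchy-estimate consequence of the uniform-operator-topology analyticity applied on the fixed closed disk where the family is uniformly bounded.
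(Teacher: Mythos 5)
Your proof is correct, and it takes a genuinely different route from the paper's. The paper proves analyticity in two steps: first it establishes continuity of $z\mapsto u_z\in\V$ by estimating $u_{z+h}-u_z$, and then it shows that the difference quotients $D^h u$ form a Cauchy family in $\V$ as $h\to 0$ (using the Step~1 continuity to control the term $u_{z+h_1}-u_{z+h_2}$ that appears when subtracting two difference-quotient equations), concluding by completeness. You instead \emph{explicitly construct} the candidate derivative $v$ as the solution of the formally differentiated equation
\begin{align}
\d v + A_{z_0} v\,\d t = -A'_{z_0} u_{z_0}\,\d t + \bigl(B_{z_0} v + B'_{z_0} u_{z_0}\bigr)\d W,\qquad v(0)=0,
\end{align}
and then verify directly that the remainder $R_z=\tfrac{u_z-u_{z_0}}{z-z_0}-v$ solves a problem with coefficients $(A_z,B_z)$ and inhomogeneities $-(A_z-A_{z_0})v-\bigl(\tfrac{A_z-A_{z_0}}{z-z_0}-A'_{z_0}\bigr)u_{z_0}$ (and the $B$-analogues), which go to zero in $\E\times\Egam$ by the operator-valued Cauchy estimates on a fixed disk. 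Both arguments lean on exactly the same two inputs — the $z$-uniform a priori estimate of Proposition~\ref{prop:lions} (uniform because Assumption~\ref{ass:SP} holds uniformly in $z$) and the uniform convergence of the operator difference quotients coming from analyticity in the uniform operator topology. Your version is arguably cleaner: it avoids the separate continuity step (since $v$ and $u_{z_0}$ are fixed once $z_0$ is fixed, you never need $u_z\to u_{z_0}$ in $\V$), and it gives an explicit formula for $u_z'$, which the Cauchy-in-$\V$ argument of the paper leaves implicit. The paper's version avoids having to guess the correct form of the derivative, which is a minor convenience. One point to make sure is tight in a write-up: the progressive measurability of $A'_{z_0}$ and $B'_{z_0}$ (which you assert via the pointwise Cauchy integral formula) is needed for $A'_{z_0}u_{z_0}\in\E$ and $B'_{z_0}u_{z_0}\in\Egam$, and hence for $v$ to be well-defined via Proposition~\ref{prop:lions}; this is fine but worth a sentence.
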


	\begin{proof}
		Since we consider differences below, we have zero initial data in the equations in this proof. For fixed $z$, Proposition~\ref{prop:lions} yields a unique solution $u_z$ of~\eqref{eq:analytic}.

		\textbf{Step 1}: $O \ni z\mapsto u_z \in \V$ is continuous.

		Let $z \in O$ and $h$ small enough so that $z+h \in O$. The function $v \coloneqq u_{z+h} - u_z$ is the unique solution of
		\begin{align}
				\d v(t) + A_{z+h} v(t) \d t = -(A_{z+h}-A_z) u_z \d t + (B_{z+h} v(t) + (B_{z+h}-B_z) u_z)\d W(t).
		\end{align}
		Therefore, by Proposition~\ref{prop:lions},
		\begin{align}
			\| v \|_{\V} &\leq C_L \bigl( \| (A_{z+h}-A_z) u_z \|_{\E} + \| (B_{z+h} - B_z) u_z \|_{\Egam} \bigr).
		\end{align}
		Hence, continuity of $A_z$ and $B_z$ yield the claim.

		\textbf{Step 2}: $O \ni z\mapsto u_z \in \V$ is analytic.

		Write $D^h$ for difference quotients of $u$, $A$, and $B$ with respect to $z$, for example $D^h u = \frac{u(z+h)-u(z)}{h}$.
		One has that $D^h u$ is the unique solution of
		\begin{align}
			\d v(t) + A_z v \d t = -(D^h A_z) u_{z+h} \d t + ( B_z v + (D^h B_z) u_{z+h} ) \d W(t).
		\end{align}
		We show that $D^h u$ is a Cauchy sequence in $\V$. Then, by the very definition of complex differentiability, $z \mapsto u_z$ is analytic. To this end, consider $h_1 \neq h_2$. The difference $w \coloneqq D^{h_1} u - D^{h_2} u$ solves
		\begin{align}
			\d w(t) + A_z w \d t = -\Bigl( \bigl(D^{h_1} A - D^{h_2} A \bigr) u_{z+h_1} + (D^{h_2} A) (u_{z+h_1} - u_{z+h_2}) \Bigr) \d t \\
			+ \Bigl( B_z w + \bigl( D^{h_1} B - D^{h_2} B \bigr) u_{z+h_1} + (D^{h_2} B) (u_{z+h_1} - u_{z+h_2}) \Bigr) \d W(t).
		\end{align}
		A calculation using Proposition~\ref{prop:lions} shows
		\begin{align}
			&\| D^{h_1}u - D^{h_2} u \|_{\V} \\
			\leq{} &C_L^2 \Bigl( \| D^{h_1}A - D^{h_2}A \|_{\cL(V, V^*)} + \| D^{h_1} B - D^{h_2} B \|_{\cL(V, \HS(U,H))} \Bigr) (\| f \|_{\E} + \| g \|_{\Egam}) \\
			&\quad+ C_L \Bigl(\| D^{h_2} A \|_{\cL(V, V^*)} + \| D^{h_2} B \|_{\cL(V, \HS(U,H))} \Bigr) \| u_{z+h_1} - u_{z+h_2} \|_{\V}.
		\end{align}
		First, the difference quotients of $A$ and $B$ are Cauchy sequences for $A$ and $B$ are holomorphic. Second, the difference quotients of $A$ and $B$ are, again by analyticity, uniformly bounded. Third, $\| u_{z+h_1} - u_{z+h_2} \|_{\V}$ tends to zero by Step~1. We conclude that $D^h u$ is a Cauchy sequence as claimed.
	\end{proof}

\begin{corollary}\label{cor:analytic}
		Let $O \subseteq \C$ be open and for $z\in O$ let $A_z$ and $B_z$ be operator functions that satisfy Assumption~\ref{ass:SP} uniformly in $z$ and that depend analytically on $z$ in the uniform operator topology. Furthermore, let $f:O\to \E$, $g: O\to \Egam$ and $u_0:O\to H$ be analytic. Then, for fixed $z$, the unique solution $u_z$ of the problem
		\begin{align}\label{eq:uzcorana}
			\d u(t) + A_z(t)u(t) \d t = f_z(t) \d t + (B_z(t) u(t) + g_z(t)) \d W(t), \quad u(0) = u_0,
		\end{align}
		gives rise to an analytic function $O \ni z \mapsto u_z \in \V$.\end{corollary}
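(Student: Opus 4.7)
The plan is to reduce Corollary \ref{cor:analytic} to Proposition \ref{prop:analytic} via the linearity of the solution map in the data $(f, g, u_0)$ for fixed operators $(A_z, B_z)$. Since the data depends analytically on $z$, one can expand it locally as a power series around any $z_0 \in O$ and apply Proposition \ref{prop:analytic} term by term to each coefficient, which is a constant (in $z$) datum.

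Fix $z_0 \in O$ and choose $r > 0$ with $\overline{D(z_0, r)} \subset O$. By vector-valued analyticity, expand
\[
f_z = \sum_{k \geq 0} f^{(k)} (z-z_0)^k, \quad g_z = \sum_{k \geq 0} g^{(k)} (z-z_0)^k, \quad u_0(z) = \sum_{k \geq 0} u_0^{(k)} (z-z_0)^k,
\]
with Cauchy estimates $\|f^{(k)}\|_\E, \|g^{(k)}\|_{\Egam}, \|u_0^{(k)}\|_H \leq M r^{-k}$ for a common constant $M$ depending on $r$. For each $k$, let $w^{(k)}_z \in \V$ be the unique solution, given by Proposition \ref{prop:lions}, of the variational problem with operators $(A_z, B_z)$ and (constant in $z$) data $(f^{(k)}, g^{(k)}, u_0^{(k)})$. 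By Proposition \ref{prop:analytic}, $z \mapsto w^{(k)}_z \in \V$ is analytic on $O$ for every $k$.

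Set $u_z^N \coloneqq \sum_{k=0}^{N} (z - z_0)^k w^{(k)}_z$. By linearity of the equation in the data for fixed $(A_z, B_z)$, $u_z^N$ solves \eqref{eq:uzcorana} with data $\sum_{k=0}^N (z-z_0)^k (f^{(k)}, g^{(k)}, u_0^{(k)})$; as $N \to \infty$, this truncated data converges in the relevant norms to $(f_z, g_z, u_0(z))$, so the a priori estimate \eqref{eq:lions} forces $u^N_z \to u_z$ in $\V$ (identification via well-posedness). On the other hand, applying \eqref{eq:lions} to each $w^{(k)}_z$, with $C_L$ uniform in $z$ by the uniformity of Assumption \ref{ass:SP}, gives
\[
\|(z-z_0)^k w^{(k)}_z\|_{\V} \leq C_L \bigl(\|u_0^{(k)}\|_H + \|f^{(k)}\|_\E + \|g^{(k)}\|_{\Egam}\bigr) |z-z_0|^k \leq 3 C_L M (|z-z_0|/r)^k,
\]
which is summable uniformly on every subdisk $\{|z - z_0| \leq \rho\}$ with $\rho < r$. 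Since each summand is analytic in $z$ and the series converges locally uniformly in $\V$, the limit $u_z$ is analytic on $D(z_0, r)$; as $z_0 \in O$ was arbitrary, $z \mapsto u_z$ is analytic on all of $O$.

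The only step requiring some care is the identification of $u_z$ with the sum $\sum (z-z_0)^k w^{(k)}_z$, where one combines linearity of the SPDE in the data with well-posedness and uniqueness from Proposition \ref{prop:lions}. Everything else is standard: vector-valued Cauchy estimates, the uniformity of $C_L$ across $z$, and the fact that a locally uniform limit of Banach-valued analytic functions is itself analytic.
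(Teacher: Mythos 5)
Your proposal is correct, but it takes a genuinely different route from the paper's. The paper's own proof is short and operator-theoretic: it observes that Proposition~\ref{prop:analytic} exactly says the solution map $z\mapsto S_z\in\cL(\E\times\Egam\times H,\V)$ is analytic in the \emph{strong} operator topology, invokes the general fact that this implies analyticity in the \emph{uniform} operator topology (citing~\cite[Prop.~A.3]{ABHN}), and then concludes $z\mapsto S_z(f_z,g_z,u_{0,z})$ is analytic as a composition of analytic maps. You instead bypass the abstract strong-to-uniform upgrade by expanding the data in a Taylor series around each $z_0$, applying Proposition~\ref{prop:analytic} term by term to each constant-in-$z$ coefficient, and recombining; the Cauchy estimates for vector-valued analytic functions plus the $z$-uniform a priori bound~\eqref{eq:lions} give locally uniform convergence of the partial sums in $\V$, and well-posedness identifies the limit with $u_z$. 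In effect your argument is a hands-on proof of the very implication the paper pulls off the shelf (applying a strongly analytic operator family to an analytic argument yields an analytic function), so it is more elementary and self-contained at the cost of being longer. Both arguments lean on the same two ingredients in the same roles — Proposition~\ref{prop:analytic} for the operator dependence and the uniformity of $C_L$ in $z$ for summability/boundedness — and both are sound. One small point worth making explicit in your write-up: after noting $u_z^N\to u_z$, it helps to say in one line that the estimate is applied to the \emph{difference} equation solved by $u_z-u_z^N$, whose data is the tail of the Taylor series; this is what "identification via well-posedness" actually uses.
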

\begin{proof}
From Proposition \ref{prop:analytic} it follows that $S_z:O\to \calL(\E\times \Egam\times H,\V)$ given by $S_z = u_z$, where $u_z$ is the solution to \eqref{eq:analytic}, is analytic in the strong operator topology. Therefore, $S$ is analytic in the uniform operator topology (see \cite[Proposition A.3]{ABHN}). Now the analyticity follows as the solution to \eqref{eq:uzcorana} is given by $S_z (f_z, g_z, u_{0,z})$ and is the composition of analytic functions.
\end{proof}

	\section{$\L^p$-estimates for small perturbations of the autonomous case}
	\label{sec:Lp}

	In this section, we establish $\L^p$-theory by means of a perturbation result with the autonomous case. The quantities in the assumption of Theorem~\ref{thm:perturb} look cumbersome at first glance, but we will see in Section~\ref{sec:ext} that they are, in fact, very closely tied to the concept of coercivity.

	Throughout this section, fix some $p>2$. We extend Definition~\ref{def:spaces1}.
	\begin{definition}
		\label{def:spaces2}
		Put $\Ep \coloneqq \L^p_\cF(\Omega \times (0,T); V^*)$ and $\Epgam \coloneqq \L^p_\cF(\Omega \times (0,T); \HS(U,H))$ for the data spaces and $\Vp \coloneqq \L^p_\cF(\Omega \times (0,T); V) \cap \L^p_\cF(\Omega; \Cont([0,T]; (H,V)_{1-\nicefrac{2}{p},p}))$ for the solution space.
	\end{definition}

	Let us introduce a reference operator for the perturbation argument.

	\begin{definition}
		Consider the operator $A_0 \colon V \to V^*$ given by $\lb A_0(u), v \rb \coloneqq (u | v)_V$.
	\end{definition}

	\begin{remark}
		The operator $A_0$ is invertible, positive and self-adjoint.
	\end{remark}

The following deterministic maximal $L^p$-regularity result holds.
	\begin{lemma}\label{lem:A0apriori}
		There is a constant $C_p$ such that, for all $\mu > 0$, $f \in \L^p(0,T; V^*)$, and $u$ a strong solution to $u' + (\mu A_0) u = f$ with $u(0) = 0$, one has the a-priori estimate
		\begin{align}
			\mu \| u \|_{\L^p(0,T; V)} \leq C_p \| f \|_{\L^p(0,T; V^*)}.
		\end{align}
	\end{lemma}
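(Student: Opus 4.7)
First I would observe that, by the very definition $\lb A_0 u, v\rb = (u|v)_V$, the operator $A_0\colon V\to V^*$ is the Riesz isomorphism between $V$ and $V^*$; it is in particular an isometric isomorphism with $\|A_0 u\|_{V^*} = \|u\|_V$ for every $u \in V$. Realised as an unbounded operator on the Hilbert space $V^*$ with domain $V$, $A_0$ is positive and self-adjoint, and hence so is $\mu A_0$ for every $\mu>0$. By the classical Hilbert space maximal regularity result \cite[Theorem 17.2.26~(4)]{HNVW3}, such operators admit maximal $\L^p$-regularity on the half line $(0,\infty)$ for every $p\in(1,\infty)$.

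The plan is then to extend $f$ by zero to $\tilde f \in \L^p(0,\infty;V^*)$, apply half line maximal $\L^p$-regularity to produce a strong solution $\tilde u$ of $\tilde u' + \mu A_0 \tilde u = \tilde f$ with $\tilde u(0)=0$ on $(0,\infty)$, identify it with $u$ on $(0,T)$ by uniqueness of strong solutions, and use the isometry $\|A_0 v\|_{V^*}=\|v\|_V$ to rewrite $\|\mu A_0 \tilde u\|_{\L^p(0,\infty;V^*)} = \mu \|\tilde u\|_{\L^p(0,\infty;V)}$. The only substantive point is ensuring that the resulting constant is independent of $\mu$. I would achieve this via the time rescaling $\tilde v(s)\coloneqq \tilde u(s/\mu)$, under which the equation becomes $\tilde v' + A_0 \tilde v = \mu^{-1}\tilde f(\cdot/\mu)$ on $(0,\infty)$; the bookkeeping of the $\L^p$-norms under $s=\mu t$ produces exactly the factor of $\mu$ claimed and involves only the autonomous maximal regularity constant of $A_0$. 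Equivalently, one can invoke the scale invariance of the bounded $H^\infty$-calculus for sectorial operators, since the substitution $w=z/\mu$ maps the family $\{z(z-\mu A_0)^{-1}\}$ to $\{w(w-A_0)^{-1}\}$.

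The main obstacle is precisely the isolation of this $\mu$-independence of the constant; once it is in hand, the remainder of the argument is a direct application of Hilbert space maximal regularity together with the Riesz isometry $\|A_0 v\|_{V^*}=\|v\|_V$ and the uniqueness of strong solutions on the half line. No time regularity of the coefficients enters, consistent with the fact that $A_0$ is autonomous.
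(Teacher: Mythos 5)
Your proposal is correct and follows essentially the same route as the paper: both use that $A_0$ is a positive self-adjoint Riesz isomorphism to invoke Hilbert-space maximal $\L^p$-regularity on the half line, obtain the $\mu$-independence of the constant by a scaling argument, transfer to the finite interval $(0,T)$, and then translate $\|\mu A_0 u\|_{V^*}$ into $\mu\|u\|_V$ via the isometry. The paper merely compresses the rescaling step by citing the proof of \cite[Theorem 17.2.26]{HNVW3} and uses \cite[Lemma 17.2.16]{HNVW3} for the restriction to $(0,T)$, whereas you spell these out explicitly.
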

\begin{proof}
The fact that $A_0$ has maximal $L^p$-regularity on $\R_+$ follows from de Simon's result (see \cite[Corollary 17.3.8]{HNVW3}). From the proof of \cite[Theorem 17.2.26]{HNVW3} it follows that $\mu A_0$ has maximal $L^p$-regularity on $\R_+$ with the same constant $C_p$. Therefore, by \cite[Lemma 17.2.16]{HNVW3}), $\mu A_0$ has maximal $L^p$-regularity on $(0,T)$ with the same constant $C_p$. This gives the desired bound with constant independent of $\mu$
\end{proof}

Next we will present a similar result in the stochastic setting, but without the optimal scaling in $\mu$ as this will not be needed later.
	\begin{lemma}
		\label{lem:A0SMR}
		For any $\mu > 0$, the operator $\mu A_0$ admits stochastic maximal $\L^p$-regularity, that is to say, there is $c(p, \mu) > 0$ such that for all $f\in \Ep$ and $g\in \Epgam$ there is a unique strong solution $u \in \V$ to
		\begin{align}
			\d u(t) + \mu A_0 u \d t = f(t) \d t + g(t) \d W(t), \qquad u(0) = 0,
		\end{align}
		satisfying the estimate
		\begin{align}
			\| u \|_{\Vp} \leq c(p, \mu) \bigl(\| f\|_{\Ep} + \| g \|_{\Epgam} \bigr).
		\end{align}
	\end{lemma}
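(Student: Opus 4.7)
The plan is to reduce to the classical stochastic maximal $\L^p$-regularity (SMR) theory on Hilbert spaces. By definition $\lb A_0 u, v\rb = (u|v)_V$, so $A_0$ is the Riesz isomorphism $V \to V^*$; equivalently, via the form construction, it is the positive self-adjoint operator on $H$ associated with the inner product on $V$, with form domain $V = \dom(A_0^{\nicefrac{1}{2}})$. For any $\mu > 0$, $\mu A_0$ inherits this structure and in particular has a bounded $\H^\infty$-calculus on $H$ of angle $0$. The Hilbert space SMR theory for such operators (see \cite{AVvar} or \cite[Chapter 17]{HNVW3}) then yields, directly at the $\mu A_0$-level, existence and uniqueness of the strong solution $u$ on $\R_+$ together with the a priori estimate with some constant $c(p, \mu) > 0$. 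Restricting to $(0,T)$ after extending $f$ and $g$ by zero past $T$ and truncating the solution yields the stated statement on $(0,T)$. Uniqueness also follows from Proposition~\ref{prop:lions} applied to the difference of two candidate solutions.

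The path regularity $u \in \L^p(\Omega; \Cont([0,T]; (H, V)_{1-\nicefrac{2}{p}, p}))$ at the trace level is part of the same SMR package for zero initial data: the natural trace space produced by SMR is the real interpolation space $(H, \dom(A_0))_{\nicefrac{1}{2} - \nicefrac{1}{p}, p}$. Using $V = \dom(A_0^{\nicefrac{1}{2}}) = [H, \dom(A_0)]_{\nicefrac{1}{2}}$, which is available in the Hilbert space setting via the functional calculus, and the reiteration theorem for real interpolation, one identifies this space with $(H, V)_{1-\nicefrac{2}{p}, p}$.

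If one prefers not to invoke SMR for $\mu A_0$ as a black box, the $\mu$-dependence can be produced explicitly by a time rescaling: setting $v(s) \coloneqq u(s/\mu)$ and $\wt W(s) \coloneqq \mu^{\nicefrac{1}{2}} W(s/\mu)$, the process $\wt W$ is again a $U$-cylindrical Brownian motion with respect to the rescaled filtration, and $v$ solves
\begin{align}
  \d v(s) + A_0 v(s) \, \d s = \mu^{-1} f(s/\mu) \, \d s + \mu^{-\nicefrac{1}{2}} g(s/\mu) \, \d \wt W(s), \quad v(0) = 0,
\end{align}
on $(0, \mu T)$. Applying SMR for $A_0$ on $(0,\mu T)$ to $v$ and changing variables back produces the claim with an explicit dependence on $\mu$ and $T$, although sharp $\mu$-dependence is not needed in the sequel.

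I do not anticipate a serious obstacle: all ingredients are standard and the variational setting $V = \dom(A_0^{\nicefrac{1}{2}})$ is precisely the archetypal example in the Hilbert-space SMR theory. The only mild subtlety is the identification of the trace space, which is why I foreground it above; the rescaling and the verification that $\wt W$ is cylindrical with respect to the rescaled filtration are routine time-change observations.
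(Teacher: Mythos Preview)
Your proposal is correct and follows essentially the same route as the paper: both arguments use that $\mu A_0$ is positive self-adjoint on $H$ (with form domain $V$), hence has a bounded $\H^\infty$-calculus of angle $0$, and then invoke the Hilbert-space stochastic maximal $\L^p$-regularity theory. The only organisational difference is that the paper first splits off the deterministic part $f$ via Lemma~\ref{lem:A0apriori} and then treats the stochastic part $g$ alone by identifying $H$ with $\L^2(\R)$ and applying \cite{vNVW}, whereas you invoke SMR as a single black box for both $f$ and $g$; your extra remarks on the trace-space identification and the time-rescaling alternative are correct but not needed for the paper's purposes.
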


	\begin{proof}
	Let $\mu > 0$. By linearity and Lemma \ref{lem:A0apriori}  it suffices to consider $f=0$. As $\mu A_0$ is positive and self-adjoint on $V^*$, it is also positive and self-adjoint on $H$ (see \cite[Proposition 1.24]{Ouh}). Therefore, it has a bounded $\H^\infty$-calculus with constant $1$
(see \cite[Proposition 10.2.23]{HNVW2}). Moreover, $H$ is, as a separable Hilbert space, isomorphic to $\L^2(\R)$. Therefore, from the definition of $A_0$ and from~\cite{vNVW} we obtain
\[\mu^{1/2} \|u\|_{\Vp} = \|(\mu A_0)^{1/2} u\|_{\L^p(\Omega\times\R_+;H)} \leq c(p) \|g\|_{\L^p(\Omega\times\R_+;\calL_2(U,H))} = c(p) \|g\|_{\Epgam}.\]
	\end{proof}

	\begin{theorem}
		\label{thm:perturb}
		Let $f\in \Ep$ and $g\in \Epgam$. Employ Assumptions~\ref{ass:SP} with $M=0$ and recall the constant $\Lambda(B)$. Suppose that there are $\eps, \delta > 0$ such that
		\begin{enumerate}
			\item for some $\mu > 0$ one has $C_p \mu^{-1} \| A - \mu A_0 \|_{\cL(V,V^*)} \leq 1-\eps$, where $C_p$ is the constant from Lemma~\ref{lem:A0apriori}, and
			\item $c \Lambda(B) \leq 1-\delta$, where $c=c(\Lambda,\mu,T,\eps,p)$. \label{it:perturb_B}
		\end{enumerate}
		Then there exists a unique strong solution $u\in \Vp$ to~\eqref{eq:sto_heat} with $u_0 = 0$ satisfying the estimate
		\begin{align}
			\| u \|_{\Vp} \leq \frac{c}{\delta} \bigl( \| f \|_{\Ep} + \| g \|_{\Epgam} \bigr).
		\end{align}
	\end{theorem}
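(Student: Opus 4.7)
The approach is a Banach fixed-point argument in $\Vp$, using the reference operator $\mu A_0$ as a pivot. The plan has two stages: first, promote the stochastic maximal $\L^p$-regularity of $\mu A_0$ (Lemma~\ref{lem:A0SMR}) to the operator $A$ itself via a Neumann-series perturbation controlled by hypothesis~(i); second, absorb the $Bu$-noise term through a further contraction in $\Vp$, controlled by hypothesis~(ii).

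\textbf{Stage~1 (stochastic MR for $A$).} For $F\in\Ep$ and $G\in\Epgam$, I would consider $du+Au\,dt=F\,dt+G\,dW$ with $u(0)=0$ rewritten as $du+\mu A_0 u\,dt=(F+(\mu A_0-A)u)\,dt+G\,dW$. For $v\in\L^p_{\cF}(\Omega\times(0,T);V)$, let $\Phi(v)$ be the strong solution of the same problem with $u$ replaced by $v$ in the perturbation term; Lemma~\ref{lem:A0SMR} guarantees $\Phi(v)\in\Vp$. Since $\Phi(v_1)-\Phi(v_2)$ solves a purely deterministic equation driven by $(\mu A_0-A)(v_1-v_2)$, Lemma~\ref{lem:A0apriori} applied $\omega$-wise and hypothesis~(i) give
\begin{align*}
\|\Phi(v_1)-\Phi(v_2)\|_{\L^p(\Omega\times(0,T);V)}
&\leq C_p\mu^{-1}\|A-\mu A_0\|_{\cL(V,V^*)}\|v_1-v_2\|_{\L^p(\Omega\times(0,T);V)} \\
&\leq(1-\epsilon)\|v_1-v_2\|_{\L^p(\Omega\times(0,T);V)},
\end{align*}
so $\Phi$ is a contraction on $\L^p(\Omega\times(0,T);V)$. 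Its fixed point $u$ solves the original equation and satisfies $\|u\|_{\L^p(\Omega\times(0,T);V)}\leq\epsilon^{-1}\|\Phi(0)\|_{\L^p(\Omega\times(0,T);V)}$, which Lemma~\ref{lem:A0SMR} bounds by $\epsilon^{-1}c(p,\mu)(\|F\|_{\Ep}+\|G\|_{\Epgam})$. Re-substituting this $\L^p(V)$-bound into the right-hand side of the perturbed equation and applying Lemma~\ref{lem:A0SMR} once more yields the stochastic maximal $\L^p$-regularity estimate for $A$:
\begin{equation*}
\|u\|_{\Vp}\leq c\bigl(\|F\|_{\Ep}+\|G\|_{\Epgam}\bigr),\qquad c=c(\Lambda,\mu,T,\epsilon,p),
\end{equation*}
with $c$ exactly the constant referenced in hypothesis~(ii).

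\textbf{Stage~2 (fixed point for the full problem).} I would then define $T\colon\Vp\to\Vp$ by $T(v)=u$, where $u$ is the unique strong solution (from Stage~1) of $du+Au\,dt=f\,dt+(Bv+g)\,dW$ with $u(0)=0$. Linearity and Stage~1 give
\begin{equation*}
\|T(v_1)-T(v_2)\|_{\Vp}\leq c\|B(v_1-v_2)\|_{\Epgam}\leq c\Lambda(B)\|v_1-v_2\|_{\Vp}\leq(1-\delta)\|v_1-v_2\|_{\Vp}
\end{equation*}
by hypothesis~(ii), so $T$ is a strict contraction. The Banach fixed-point theorem then supplies a unique $u\in\Vp$ with $T(u)=u$, and combining the contraction with $\|T(0)\|_{\Vp}\leq c(\|f\|_{\Ep}+\|g\|_{\Epgam})$ yields the claimed bound $\|u\|_{\Vp}\leq(c/\delta)(\|f\|_{\Ep}+\|g\|_{\Epgam})$.

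The main obstacle will be Stage~1: unwinding the Neumann iteration carefully enough to produce a stochastic maximal regularity constant for $A$ of the form $c(\Lambda,\mu,T,\epsilon,p)$ that matches the constant appearing in hypothesis~(ii); this matching is what allows (ii) to be formulated as a single clean smallness condition. Stage~2 is then a routine contraction argument.
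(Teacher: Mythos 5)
Your proof is correct and reaches the same estimate, but Stage~1 takes a genuinely different route from the paper. The paper establishes well-posedness of the $B=0$ problem via the \emph{method of continuity}: it defines the interpolated family $A_\theta = (1-\theta)\mu A_0 + \theta A$, notes hypothesis~(i) is preserved along the homotopy, and then reduces to an a~priori estimate for any given strong solution $u$, proved by writing $u = v + w$ with $v$ the solution of the reference problem and $w$ satisfying a pathwise deterministic equation with right-hand side $-(A - \mu A_0)u$; absorption then gives $\|w_\omega\| \le \frac{1-\eps}{\eps}\|v_\omega\|$. It also has to separately invoke~\cite[Step~1 in Thm.~3.9]{PV19} to guarantee progressive measurability of the given solution before integrating over $\Omega$. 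Your Stage~1 instead runs a direct Banach fixed-point (Neumann-series) argument on $\L^p_\cF(\Omega\times(0,T);V)$: the map $\Phi$ shifts the perturbation term to a frozen input $v$, and $\Phi(v_1)-\Phi(v_2)$ solves a deterministic equation with data $(\mu A_0 - A)(v_1-v_2)$, so the same pathwise application of Lemma~\ref{lem:A0apriori} and hypothesis~(i) gives contraction constant $1-\eps$ directly. This avoids both the interpolated family $A_\theta$ and the separate measurability step (the fixed point lives by construction in the progressively measurable class), making it slightly more self-contained; what the paper's version buys is conformity with the standard method-of-continuity template in the stochastic maximal regularity literature, which separates a~priori bounds from existence — a useful separation in settings where the direct contraction is unavailable, though not needed here. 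Your Stage~2 (absorbing $Bu$ via a second contraction using hypothesis~(ii)) coincides with the paper's Step~3. One small point worth making explicit in a write-up: after obtaining the $\L^p(\Omega\times(0,T);V)$ bound by the fixed point, you correctly observe that re-substituting and applying Lemma~\ref{lem:A0SMR} once more yields the full $\Vp$-bound with the constant $c$; this re-substitution is also implicitly needed in the paper's proof (its display~\eqref{eq:grad_est} only covers the $\L^p(\Omega\times(0,T);V)$ part of the $\Vp$-norm), so your version is if anything a bit more explicit on this point.
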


	\begin{proof}
		We tacitly impose the initial condition $u(0)=0$ whenever we speak about equations in this proof. First, we investigate the problem with $B=0$, that is to say, we want to show that there exists a unique strong solution $u\in \Vp$ to the stochastic problem
		\begin{align}
			\label{eq:Llin}
			\d u(t) + A(t)u(t) \d t = f(t) \d t + g(t) \d W(t)
		\end{align}
		satisfying the estimate
		\begin{align}
			\label{eq:apriori}
			\| u \|_{\Vp} \leq c (\| f \|_{\Ep} + \| g \|_{\Epgam})
		\end{align}
		with a constant $c$ depending only on $\eps$, $\Lambda$, $\mu$, $T\vee 1$, and $p$. This constant is the constant postulated in hypothesis~\ref{it:perturb_B} of the theorem.

		\textbf{Step 1}: Reduction to an a-priori estimate.

		For $\theta \in [0,1]$ consider $A_\theta \coloneqq (1-\theta) \mu A_0 + \theta A$. Since $A_\theta - \mu A_0 = \theta (A - \mu A_0)$, one has $$C_p \mu^{-1} \| A_\theta - \mu A_0 \|_{\cL(V, V^*)} = C_p \mu^{-1} \theta \| A - \mu A_0 \|_{\cL(V, V^*)} \leq 1 - \eps.$$

		Therefore, in the light of a stochastic version of the method of continuity (see \cite[Prop. 3.10]{PV19}), it suffices to show the a priori estimate~\eqref{eq:apriori} for any strong solution $u$ of~\eqref{eq:Llin}.

		\textbf{Step 2}: Show the a-priori estimate.

		Let $u$ a strong solution of~\eqref{eq:Llin}. By Lemma~\ref{lem:A0SMR} there is a strong solution $v \in \Vp$ of $$\d v(t) + (\mu A_0) v(t) \d t = f(t) \d t + g(t) \d W(t)$$ satisfying the estimate
		\begin{align}
			\label{eq:est_v}
			\| v \|_{\Vp} \leq c(p, \mu) (\| f\|_{\Ep} + \| g \|_{\Epgam}).
		\end{align}
		Define $w \coloneqq u - v \in \Vp$. Fix $\omega \in \Omega$. Use the shorthand notation $w_\omega(t) = w(\omega,t)$, and so on. By construction, $w_\omega$ is almost surely a strong solution to the deterministic problem $$\partial_t w_\omega + (\mu A_0) w_\omega = -(A_\omega - \mu A_0) u_\omega.$$ Lemma~\ref{lem:A0apriori} lets us compute
		\begin{align}
			\mu \| w_\omega \|_{\L^p(0,T; V)} &\leq C_p \| (A_\omega - \mu A_0) u_\omega \|_{\L^p(0,T; V^*)} \\
			&\leq C_p \| A_\omega - \mu A_0 \|_{\cL(V,V^*)} \Bigl( \| v_\omega \|_{\L^p(0,T; V)} + \| w_\omega \|_{\L^p(0,T; V)} \Bigr).
		\end{align}
		Divide this by $\mu$ and use the assumption to give
		\begin{align}
			\| w_\omega \|_{\L^p(0,T; V)} &\leq C_p \mu^{-1} \| (A_\omega - \mu A_0) \|_{\cL(V,V^*)} \Bigl( \| v_\omega \|_{\L^p(0,T; V)} + \| w_\omega \|_{\L^p(0,T; V)} \Bigr) \\
			&\leq  (1-\eps) \| v_\omega \|_{\L^p(0,T; V)} + (1-\eps) \| w_\omega \|_{\L^p(0,T; V)}.
		\end{align}
		Absorb the second term on the right-hand side into the left-hand side to deduce
		\begin{align}
			\| w_\omega \|_{\L^p(0,T; V)} \leq \frac{1-\eps}{\eps}\| v_\omega \|_{\L^p(0,T; V)}.
		\end{align}
		According to~\cite[Step~1 in Thm.~3.9]{PV19}, $u$ is progressively measurable. Hence, averaging the last bound over $\Omega$ yields
		\begin{align}
			\| w \|_{\L^p(\Omega \times (0,T); V)} \leq \frac{1-\eps}{\eps} \| v \|_{\L^p(\Omega \times (0,T); V)}.
		\end{align}
		In conjunction with $u=v+w$ and~\eqref{eq:est_v} we obtain in summary
		\begin{align}
			\label{eq:grad_est}
			\| u \|_{\L^p(\Omega \times (0,T); V)} &\leq \| v \|_{\L^p(\Omega \times (0,T); V)} + \| w \|_{\L^p(\Omega \times (0,T); V)} \leq \frac{c(p,\mu)}{\eps} \bigl( \| f\|_{\Ep} + \| g \|_{\Epgam} \bigr).
		\end{align}

		\textbf{Step 3}: Including $B$.

		As is standard (see~\cite[Prop.~3.10]{PV19}, for instance), we solve the problem~\eqref{eq:sto_heat} with $u_0 = 0$,
		\begin{align}
			\label{eq:Lnonlin}
			\d u(t) + A(t)u(t) \d t = f(t) \d t + (B(t)u(t) + g(t)) \d W(t),
		\end{align}
		using a fixed-point argument. For $\Phi \in \Vp$, replace $B u$ by $B \Phi$ in~\eqref{eq:Lnonlin}. This problem possesses a unique solution in $\Vp$ in virtue of the first part of the proof. Write $R(\Phi)$ for it. To show unique solvability of~\eqref{eq:Lnonlin}, it suffices to show that $R$ is a strict contraction.

		Let $\Phi_1, \Phi_2 \in \Vp$. By linearity, $R(\Phi_1) - R(\Phi_2)$ is the unique strong solution to the problem $$\d u(t) + A(t)u(t) \d t = B(t)(\Phi_1(t) - \Phi_2(t)) \d W(t).$$ Hence,~\eqref{eq:apriori} with $f=0$ and $g = B (\Phi_1 - \Phi_2)$ gives
		\begin{align}
			\| R(\Phi_1) - R(\Phi_2) \|_{\Vp} &\leq C \| B (\Phi_1 - \Phi_2) \|_{\Epgam} \leq C \Lambda(B) \| \Phi_1 - \Phi_2 \|_{\Vp}.
		\end{align}
		Thus, $R$ is a strict contraction by the assumption $C \Lambda(B) \leq 1-\delta$.

		For the estimate, apply the a-priori estimate~\eqref{eq:apriori} once more to find
		\begin{align}
			\| u \|_{\Vp} &\leq C( \| f \|_{\Ep} + \| g \|_{\Epgam} + \| Bu \|_{\Epgam}) \leq C( \| f \|_{\Ep} + \| g \|_{\Epgam}) + (1-\delta) \| u \|_{\Vp}.
		\end{align}
		We can absorb the second term of the right-hand side into the left-hand side to conclude.
	\end{proof}

	\section{Extrapolation of integrability in time}
	\label{sec:ext}

	Our main result follows from an interpolation argument using abstract Stein interpolation~\cite{AbstractStein}. For details on interpolation the reader is  referred to \cite{BeLo,HNVW1,Tr1}. The last two sections constitute the endpoint cases of this interpolation. Before we can conclude in Section~\ref{subsec:stein}, we have to construct an analytic family of auxiliary problems first.

	\begin{ass}
		\label{ass:symmetric}
		For fixed $t$, the form $V\times V\ni (u,v) \mapsto \lb A(\omega,t)u, v \rb$ is hermitian almost surely. To ease notation, write $a(u,v) \coloneqq a_{\omega,t}(u,v) \coloneqq \lb A(\omega,t)u, v \rb$.
	\end{ass}
This symmetry assumption is used in a crucial way in Lemmas \ref{lem:dist_A-B} and \ref{lem:elliptic}.

	\begin{conv}
		We ignore the dependence on $(\omega,t)$ in our notation. For instance, we let $A$ denote the operator $V\to V^*$ given by $\lb Au, v \rb \coloneqq \lb A(\omega, t)u, v \rb$. Also, we put $a(u,v) \coloneqq \lb Au, v \rb$ and $a[u] \coloneqq a(u,u)$.
	\end{conv}

	The following lemma translates upper bounds on the diagonal of $V\times V$ to the whole form. That the optimal constant is $1$ is a consequence of Assumption~\ref{ass:symmetric}, but is actually not needed later. A simpler polarization argument without Assumption \ref{ass:symmetric} would lead to an extra factor $2$.

	\begin{lemma}
		\label{lem:diag}
Suppose that Assumption \ref{ass:symmetric} holds. If for some $c>0$ one has $|a[u]| \leq c \| u \|_V^2$, then also $|a(u,v)| \leq c \| u \|_V \| v \|_V$ for all $u,v \in V$.
	\end{lemma}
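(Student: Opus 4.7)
The plan is to use the standard polarization-plus-homogenization argument for hermitian sesquilinear forms. The hypothesis controls $a$ on the diagonal; hermiticity upgrades this to control on all of $V\times V$ without loss of constant.

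First, I would record the consequence of Assumption~\ref{ass:symmetric}: for every $u\in V$ the value $a[u]=a(u,u)$ is real, and for any $u,v\in V$ and any $\theta\in\R$ the elementary expansion
\begin{equation}
a[u+e^{\i\theta}v]-a[u-e^{\i\theta}v] = 4\Re\bigl(e^{-\i\theta}a(u,v)\bigr)
\end{equation}
holds, as the $a[u]$ and $a[v]$ terms cancel and the mixed terms combine via $a(v,u)=\overline{a(u,v)}$.

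Next, I would choose $\theta$ so that $e^{-\i\theta}a(u,v)=|a(u,v)|$, so that the right-hand side equals $4|a(u,v)|$. Applying the hypothesis $|a[w]|\leq c\|w\|_V^2$ to each of the two real numbers on the left gives
\begin{equation}
4|a(u,v)| \leq c\bigl(\|u+e^{\i\theta}v\|_V^2 + \|u-e^{\i\theta}v\|_V^2\bigr) = 2c\bigl(\|u\|_V^2+\|v\|_V^2\bigr)
\end{equation}
by the parallelogram identity in the Hilbert space $V$. This already delivers the arithmetic-mean bound $|a(u,v)|\leq \tfrac{c}{2}(\|u\|_V^2+\|v\|_V^2)$.

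Finally, I would homogenize: for $u,v$ nonzero and $t>0$, substitute $(tu, t^{-1}v)$ to obtain
\begin{equation}
|a(u,v)| = |a(tu, t^{-1}v)| \leq \tfrac{c}{2}\bigl(t^2\|u\|_V^2 + t^{-2}\|v\|_V^2\bigr),
\end{equation}
and optimize by setting $t^2=\|v\|_V/\|u\|_V$, yielding $|a(u,v)|\leq c\|u\|_V\|v\|_V$. The degenerate cases where $u=0$ or $v=0$ are trivial.

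There is no real obstacle here; the only subtlety is ensuring that the factor $1$ (rather than $2$) is obtained, which is exactly why Assumption~\ref{ass:symmetric} is invoked — it is this hermiticity that makes the cross terms in the polarization cancel cleanly so that the parallelogram law gives the bound $2c(\|u\|_V^2+\|v\|_V^2)$ on the right-hand side rather than a larger quantity.
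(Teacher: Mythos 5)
Your proof is correct and follows essentially the same route as the paper: choose a unimodular phase to make $a(u,v)$ real and positive, exploit hermiticity so only the two real polarization terms survive, bound each by the diagonal hypothesis and combine via the parallelogram law to get the arithmetic-mean estimate $\tfrac{c}{2}(\|u\|_V^2+\|v\|_V^2)$, then homogenize by scaling $(u,v)\mapsto (tu,t^{-1}v)$. The only cosmetic difference is that you derive the two-term identity directly, while the paper writes out the full four-term polarization identity and then discards the imaginary part; both are the same computation.
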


	\begin{proof}
		Let $u,v \in V$. We can of course assume $u \neq 0 \neq v$. There is a complex number $z$ with $|z|=1$ such that $za(u,v)$ is real and positive. By the polarization identity one has
		\begin{align}
			\label{eq:pola}
			z a(u,v) = a(zu, v) = \frac{1}{4} \bigl(a[zu+v] - a[zu-v] + \i a[zu+\i v] - \i a[zu-\i v] \bigr).
		\end{align}
		Since $a$ is hermitian, $a[u]$ is real for any $u \in V$. Hence, taking the real part of~\eqref{eq:pola} gives $$|a(u,v)| = z a(u,v) = \frac{1}{4} \bigl(a[zu+v] - a[zu-v] \bigr).$$ Expanding the quadratic forms in conjunction with the bound on the diagonal leads to
		\begin{align}
			\label{eq:pola2}
			|a(u,v)| = \frac{1}{4} \bigl( 2 a[zu] + 2 a[v] \bigr) \leq \frac{c}{2} \bigl( \| u \|_V^2 + \| v \|_V^2 \bigr).
		\end{align}
It remains to use $|a(u,v)| = |a(\lambda u,\lambda^{-1}v)|$, apply \eqref{eq:pola2}, and minimize over $\lambda>0$.
	\end{proof}

	\subsection{Complex family of auxiliary problems}
	\label{subsec:aux}

	\begin{lemma}
		\label{lem:dist_A-B}
		Suppose that Assumption~\ref{ass:SP} holds with $M=0$ and that Assumption \ref{ass:symmetric} holds.
		Put $\mu = \Lambda(A)$ and $\rho = \frac{\lambda}{\Lambda(A)}$. Then one has $$\Bigl| \mu^{-1} \Bigl( a[v] - \frac{1}{2} \| B v \|_{\HS(U,H)}^2 \Bigr) - \| v \|_V^2 \Bigr| \leq (1-\rho) \| v\|_V^2.$$
	\end{lemma}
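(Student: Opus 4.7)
The plan is to show that the real number $\mu^{-1}(a[v]-\tfrac12\|Bv\|_{\HS(U,H)}^2)$ lies in the interval $[\rho,1]\cdot\|v\|_V^2$; the claimed inequality then follows trivially because the distance of any point in $[\rho,1]$ to $1$ is at most $1-\rho$.

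First I would observe that the quantity in question is actually real: Assumption~\ref{ass:symmetric} says $a$ is Hermitian, so $a[v]=\overline{a[v]}$, and of course $\|Bv\|_{\HS(U,H)}^2\in\mathbb{R}$. This justifies dropping the real parts in what follows.

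Next I would establish the two one-sided bounds. The lower bound
\[
a[v]-\tfrac12\|Bv\|_{\HS(U,H)}^2 \;\geq\; \lambda\|v\|_V^2 \;=\; \rho\,\mu\,\|v\|_V^2
\]
is just the coercivity assumption \eqref{eq:coerc} with $M=0$, combined with the definitions $\mu=\Lambda(A)$ and $\rho=\lambda/\Lambda(A)$. The upper bound
\[
a[v]-\tfrac12\|Bv\|_{\HS(U,H)}^2 \;\leq\; a[v] \;=\; \langle Av,v\rangle \;\leq\; \|Av\|_{V^*}\|v\|_V \;\leq\; \Lambda(A)\|v\|_V^2 \;=\; \mu\|v\|_V^2
\]
uses nonnegativity of $\|Bv\|_{\HS(U,H)}^2$ together with Cauchy--Schwarz and the boundedness part of Assumption~\ref{ass:SP}. (It is here that the upper bound is genuinely on $a[v]$ itself rather than $|a[v]|$, which is why the factor $1$ on the right comes out sharp and not $2$; Lemma~\ref{lem:diag} is not needed at this stage, only the real-valuedness granted by Assumption~\ref{ass:symmetric}.)

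Dividing through by $\mu>0$ gives
\[
\rho\,\|v\|_V^2 \;\leq\; \mu^{-1}\!\left(a[v]-\tfrac12\|Bv\|_{\HS(U,H)}^2\right) \;\leq\; \|v\|_V^2,
\]
so the difference with $\|v\|_V^2$ lies in $[-(1-\rho)\|v\|_V^2,\,0]$, proving the lemma. Note $\rho\leq 1$ follows automatically, since coercivity plus boundedness force $\lambda\leq\Lambda(A)$. There is no real obstacle here: the only conceptual point is recognising that Assumption~\ref{ass:symmetric} is exactly what makes the quantity real so that the two-sided sandwich above has meaning.
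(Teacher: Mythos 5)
Your proof is correct and is essentially the same as the paper's: both rest on the sandwich $\lambda\|v\|_V^2 \le a[v]-\tfrac12\|Bv\|_{\HS(U,H)}^2 \le \Lambda(A)\|v\|_V^2$ (lower bound from coercivity, upper bound from $\|Bv\|^2\ge 0$ plus boundedness and the real-valuedness supplied by Assumption~\ref{ass:symmetric}), after which one divides by $\mu=\Lambda(A)$. The only cosmetic difference is that the paper presents this as an optimization over a scaling parameter $s$ that recovers $\mu=\Lambda(A)$ and $\rho=\lambda/\Lambda(A)$ as the optimal choice, whereas you simply plug in the value of $\mu$ already stated in the lemma; the underlying inequalities are identical.
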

	\begin{proof}
		To ease notation, put $c[v] \coloneqq a[v] - \frac{1}{2} \| B v \|_{\HS(U,H)}^2$. By Assumptions~\ref{ass:SP} (with $M=0$) and~\ref{ass:symmetric} one has $\lambda \| v \|_V^2 \leq c[v] \leq \Lambda(A) \| v \|_V^2$. Let $s \geq 0$. Assumption~\ref{ass:symmetric} implies that $s c[v] - \| v \|_V^2$ is real, hence
		\begin{align}
		\label{eq:max}
			| s c[v] - \| v \|_V^2| = \max( s c[v] - \| v \|_V^2, \| v \|_V^2 - s c[v]).
		\end{align}
		Consider the first term. Since $$s c[v] - \| v \|_V^2 \leq (s \Lambda(A) - 1) \| v \|_V^2,$$ the maximum in~\eqref{eq:max} coincides with the second term for any choice $s \leq \frac{1}{\Lambda(A)}$. On the other hand, $$\| v \|_V^2 - s c[v] \leq (1-s \lambda)\| v\|_V^2,$$ so the right-hand side of the last estimate is minimal for the maximal admissible choice $s = (\Lambda(A))^{-1}$. Put $\mu = s^{-1} = \Lambda(A)$ and $\rho = s\lambda =\frac{\lambda}{\Lambda(A)}$. In summary, we obtain
		\begin{align}
			| \mu^{-1} c[v] - \| v \|_V^2 | \leq (1-\rho) \| v \|_V^2. &\qedhere
		\end{align}
	\end{proof}

	The same calculation but applied to $a[v]$ instead of $a[v] - \frac{1}{2} \| B v \|_{\HS(U,H)}^2$ in conjunction with Lemma~\ref{lem:diag} gives the following.
	\begin{corollary}
		\label{cor:dist_A}
		Suppose that Assumption~\ref{ass:SP} holds with $M=0$ and that Assumption \ref{ass:symmetric} holds.	For the same $\mu$ and $\rho$ as in Lemma~\ref{lem:dist_A-B} one has $\| \mu^{-1} A - A_0 \|_{\cL(V,V^*)} \leq 1-\rho$.
	\end{corollary}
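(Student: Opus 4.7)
The plan is to essentially redo the computation of Lemma~\ref{lem:dist_A-B} with the term $\tfrac12\|Bv\|_{\HS(U,H)}^2$ deleted, and then to upgrade the resulting diagonal bound to a bilinear bound via Lemma~\ref{lem:diag}. The whole argument is a few lines, so I will just spell out the order.

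First I would check that the operator $T \coloneqq \mu^{-1}A - A_0$ satisfies the hypothesis of Lemma~\ref{lem:diag}. The form $(u,v)\mapsto \langle A_0 u,v\rangle = (u\mid v)_V$ is Hermitian by definition of the inner product on $V$, and Assumption~\ref{ass:symmetric} says that $(u,v)\mapsto a(u,v)$ is Hermitian. Hence so is $(u,v)\mapsto \langle Tu,v\rangle = \mu^{-1}a(u,v) - (u\mid v)_V$, and in particular $\langle Tv,v\rangle\in\R$.

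Next I would establish the diagonal bound. Assumption~\ref{ass:SP} with $M=0$ together with $\tfrac12\|Bv\|_{\HS(U,H)}^2\geq 0$ yields $a[v]\geq \lambda\|v\|_V^2$, and boundedness of $A$ together with Cauchy--Schwarz gives $|a[v]|\leq \|Av\|_{V^*}\|v\|_V\leq \Lambda(A)\|v\|_V^2$. Since $a[v]$ is real, this means $\lambda\|v\|_V^2\leq a[v]\leq \Lambda(A)\|v\|_V^2$. Choosing $\mu = \Lambda(A)$ and $\rho = \lambda/\Lambda(A)$ as in Lemma~\ref{lem:dist_A-B}, one has
\begin{align}
\mu^{-1}a[v]-\|v\|_V^2 &\leq 0 \leq (1-\rho)\|v\|_V^2,\\
\|v\|_V^2-\mu^{-1}a[v] &\leq (1-\rho)\|v\|_V^2,
\end{align}
so $|\langle Tv,v\rangle|=|\mu^{-1}a[v]-\|v\|_V^2|\leq (1-\rho)\|v\|_V^2$. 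This is exactly the computation \eqref{eq:max} run with $s=\mu^{-1}$, $c[v]$ replaced by $a[v]$.

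Finally I would invoke Lemma~\ref{lem:diag} with $c = 1-\rho$ applied to the Hermitian form $(u,v)\mapsto \langle Tu,v\rangle$ (for which the diagonal bound has just been established) to conclude $|\langle Tu,v\rangle|\leq (1-\rho)\|u\|_V\|v\|_V$ for all $u,v\in V$, which is the claimed operator-norm bound $\|\mu^{-1}A-A_0\|_{\cL(V,V^*)}\leq 1-\rho$. There is no genuine obstacle here; the only subtle point is noticing that $T$ inherits the symmetry needed for Lemma~\ref{lem:diag}, which is precisely why Assumption~\ref{ass:symmetric} is being imposed in this subsection.
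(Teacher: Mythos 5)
Your argument is correct and is precisely what the paper intends: rerun the computation of Lemma~\ref{lem:dist_A-B} with $a[v]$ in place of $a[v]-\tfrac12\|Bv\|_{\HS(U,H)}^2$ to get the diagonal bound, then pass to the full bilinear bound via Lemma~\ref{lem:diag}. You also correctly note the one small point worth making explicit, namely that the form $(u,v)\mapsto\lb(\mu^{-1}A-A_0)u,v\rb$ is Hermitian so that the polarization argument of Lemma~\ref{lem:diag} applies to it.
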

	We use $\mu$ and $\rho$ from the lemma to define complex perturbations of $A$ and $B$.
	\begin{definition}
		\label{def:perturb}
		With $\mu$ and $\rho$ from Lemma~\ref{lem:dist_A-B}, fix numbers $0<r<1<R$ satisfying
		\begin{enumerate}
			\item $R (1-\rho) < 1$, and \label{it:R}
			\item $r \min(C_p (1-\rho), c \Lambda(B)) < 1$, where $C_p$ is the constant from Lemma~\ref{lem:A0apriori} and $c$ is the constant from Theorem~\ref{thm:perturb}. \label{it:r}
		\end{enumerate}
		Moreover, with the holomorphic function $F(z) = r \e^{z \log(R/r)}$ defined on the open unit strip $S = \{z\in \C: 0<\Re(z)<1\}$, define
		\begin{align}
			A_z \coloneqq \mu \bigl(F(z) (\mu^{-1} A - A_0) + A_0\bigr) \quad \& \quad B_z \coloneqq F(z)^\frac{1}{2} B.
		\end{align}
	\end{definition}
	\begin{lemma}
		\label{lem:elliptic}
Suppose that Assumption~\ref{ass:SP} holds with $M=0$ and that Assumption \ref{ass:symmetric} holds.	Then the coercivity condition~\eqref{eq:coerc} in Assumption~\ref{ass:SP} is satisfied for $A_z$ and $B_z$, uniformly in $z\in \overline{S}$. To be more precise, the implied constant depends only on $\lambda$, $\Lambda(A)$, and $R$.
	\end{lemma}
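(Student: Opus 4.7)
The plan is to verify the coercivity inequality directly by computing the quadratic form attached to $(A_z,B_z)$ and then regrouping terms so that the decomposition $a[v] = c[v] + \tfrac{1}{2}\|Bv\|_{\HS(U,H)}^2$, with $c[v] := a[v] - \tfrac{1}{2}\|Bv\|_{\HS(U,H)}^2 \geq \lambda \|v\|_V^2$, can be exploited. Using $\lb A_0 v, v\rb = \|v\|_V^2$ together with $\|B_z v\|_{\HS(U,H)}^2 = |F(z)|\,\|Bv\|_{\HS(U,H)}^2$, and invoking Assumption~\ref{ass:symmetric} to ensure $a[v]$ is real, I would take real parts to reach
\[
\Re\lb A_z v, v\rb - \tfrac{1}{2}\|B_z v\|_{\HS(U,H)}^2 = \Re(F(z))\,a[v] + \mu(1-\Re(F(z)))\|v\|_V^2 - \tfrac{|F(z)|}{2}\|Bv\|_{\HS(U,H)}^2.
\]

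The central algebraic step I have in mind is the regrouping
\[
\Re(F(z))\,a[v] - \tfrac{|F(z)|}{2}\|Bv\|_{\HS(U,H)}^2 = \bigl(\Re(F(z)) - |F(z)|\bigr)\,a[v] + |F(z)|\,c[v],
\]
obtained from $\tfrac{1}{2}\|Bv\|_{\HS(U,H)}^2 = a[v]-c[v]$. This pairs $c[v]$ with the nonnegative factor $|F(z)|$ and $a[v]$ with the nonpositive factor $\Re(F(z))-|F(z)|$. I would then apply the lower bound $c[v]\geq \lambda\|v\|_V^2$ and, respecting the sign flip, the upper bound $a[v]\leq \Lambda(A)\|v\|_V^2 = \mu\|v\|_V^2$ from Assumption~\ref{ass:SP}. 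After adding the remaining $\mu(1-\Re(F(z)))\|v\|_V^2$, the $\Re(F(z))$-dependent contributions cancel and the estimate collapses to
\[
\Re\lb A_z v, v\rb - \tfrac{1}{2}\|B_z v\|_{\HS(U,H)}^2 \geq \bigl[\mu - |F(z)|(\mu-\lambda)\bigr]\|v\|_V^2 = \mu\bigl[1 - |F(z)|(1-\rho)\bigr]\|v\|_V^2.
\]

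Since the modulus of $F$ depends only on $\Re(z)$ and satisfies $|F(z)| \leq R$ on $\overline{S}$, and Definition~\ref{def:perturb}(i) has been calibrated so that $R(1-\rho) < 1$, I conclude
\[
\Re\lb A_z v, v\rb - \tfrac{1}{2}\|B_z v\|_{\HS(U,H)}^2 \geq \mu\bigl[1 - R(1-\rho)\bigr]\|v\|_V^2,
\]
with positive coercivity constant depending only on $\Lambda(A)$, $\lambda$ (via $\rho$), and $R$ (and with $M'=0$). The main subtlety I anticipate is identifying the correct pairing in the regrouping step: pairing $|F(z)|$ with $a[v]$ instead would not allow $c[v]$'s lower bound and $a[v]$'s upper bound to be simultaneously invoked under the sign-reversing multiplication by $\Re(F(z))-|F(z)|\leq 0$. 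With the pairing above the two two-sided controls on the form combine cleanly, and Definition~\ref{def:perturb}(i) emerges as exactly the smallness condition needed to keep the surviving coefficient positive.
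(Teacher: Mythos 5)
Your proof is correct, and the final bound $\Re\lb A_z v, v\rb - \tfrac12\|B_z v\|^2 \geq \mu[1-R(1-\rho)]\|v\|_V^2$ matches the paper's. The approach is essentially the same: both proofs exploit that $a[v]$ is real (from Assumption~\ref{ass:symmetric}), the inequality $\Re F(z) \leq |F(z)|$, and the two bounds $c[v] \geq \lambda\|v\|_V^2$ and $a[v] \leq \Lambda(A)\|v\|_V^2$. The one genuine difference is organizational: the paper first replaces $\Re F(z)$ by $|F(z)|$ against the nonpositive quantity $\mu^{-1}a[v]-\|v\|_V^2$, factors out $|F(z)|$, and then invokes Lemma~\ref{lem:dist_A-B} as a packaged two-sided estimate $|\mu^{-1}c[v]-\|v\|_V^2|\leq(1-\rho)\|v\|_V^2$; you instead regroup $\Re F(z)\,a[v]-\tfrac{|F(z)|}{2}\|Bv\|^2$ into $(\Re F(z)-|F(z)|)a[v]+|F(z)|c[v]$ and feed in the upper bound on $a[v]$ and lower bound on $c[v]$ separately, so that Lemma~\ref{lem:dist_A-B} is never explicitly needed. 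Your version is a touch more self-contained and avoids the (slightly wasteful) step $1+|F(z)|x\geq 1-|F(z)||x|$, but the arithmetic content is identical and you use exactly the same quantities that Lemma~\ref{lem:dist_A-B} repackages.
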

	\begin{proof}
		Write $\| \cdot \|_{\HS} \coloneqq \| \cdot \|_{\HS(U,H)}$. Recall $\mu = \Lambda(A)$ from Lemma~\ref{lem:dist_A-B}. Consequently, $\| \mu^{-1} A \|_{\cL(V,V^*)} \leq 1$. Since $a$ is hermitian by Assumption \ref{ass:symmetric}, it follows that $\mu^{-1} a[v] - \| v \|_V^2$ is real and non-positive. Recall $$\mu^{-1} A_z = F(z)(\mu^{-1} A - A_0) + A_0.$$ Calculate for $\| v \|_V = 1$ that
		\begin{align}
			\Re \mu^{-1} \bigl( a_z[v] - \frac{1}{2} \| B_z(v) \|_{\HS}^2 \bigr)
			&={} 1 + \bigl(\Re F(z)\bigr) (\mu^{-1} a[v] - \| v \|_V^2) -  \frac{|F(z)|}{2\mu} \| B(v) \|_{\HS}^2 \\
			&\geq{} 1 + |F(z)| (\mu^{-1} a[v] - \| v \|_V^2) -  \frac{|F(z)|}{2\mu} \| B(v) \|_{\HS}^2 \\
			&={} 1 + |F(z)| \Bigl(\mu^{-1} \bigl( a[v] - \frac{1}{2} \| B(v) \|_{\HS}^2 \bigr) - \| v\|_V^2 \Bigr) \\
			&\geq{} 1 - |F(z)| \Bigl|\mu^{-1} \bigl(a[v] - \frac{1}{2} \| B(v) \|_{\HS}^2\bigr) - \| v\|_V^2 \Bigr|,
		\end{align}
		where we exploited in the first line that $\mu^{-1} a[v] - \| v \|_V^2$ is real and in the second line that it is non-positive.
		Therefore, Lemma~\ref{lem:dist_A-B} reveals
		\begin{align}
			\Re \mu^{-1} \bigl( a_z[v] - \frac{1}{2} \| B_z(v) \|_{\HS}^2 \bigr) \geq 1 - |F(z)|(1-\rho),
		\end{align}
		so that the right-hand side is strictly positive in virtue of Definition~\ref{def:perturb}~\ref{it:R} and since $|F(z)|\leq r\e^{\log(R/r)} = R$. Eventually, multiplying by $\mu$ completes the proof.
	\end{proof}
	Now consider on the unit strip $S$ the complex family of problems
	\begin{align}
		\label{eq:sto_heat_z}
		\tag{$P_z$}
		\d u(t) + A_z(t)u(t) \d t = f(t) \d t + (B_z(t) u(t) + g(t)) \d W(t)
	\end{align}
	with initial condition $u(0)=0$.

	By construction, the mappings $z\mapsto A_z$ and $z\mapsto B_z$ are analytic, and Lemma~\ref{lem:elliptic} assures that Assumption~\ref{ass:SP} is satisfied uniformly in $z$. We conclude with Proposition~\ref{prop:analytic} that $z\mapsto u_z \in \V$, where $u_z$ is the unique solution of~\eqref{eq:sto_heat_z}, is analytic.

	\subsection{Conclusion by Stein interpolation}
	\label{subsec:stein}

	The following proposition is the basis for our main result.

	\begin{proposition}
		\label{prop:main}
		Under Assumptions~\ref{ass:SP} with $M=0$ and~\ref{ass:symmetric} there exists a $p_0>2$ such that, for all $p\in [2, p_0]$, $f\in \E^p$ and $g\in \Epgam$, problem~\eqref{eq:sto_heat} with $u_0 = 0$ admits a unique solution $u \in \V^p$, and there is a constant $C$ depending only on $\Lambda(A)$, $\Lambda(B)$, $\lambda$, $T\vee 1$ such that
		\begin{align}
			\| u \|_{\V^p} \leq C \bigl( \| f \|_{\E^p} + \| g \|_{\Epgam} \bigr).
		\end{align}
	\end{proposition}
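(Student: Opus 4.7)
The plan is to apply abstract Stein interpolation \cite{AbstractStein} to the family of solution maps $S_z\colon (f,g)\mapsto u_z$, where $u_z\in \V$ is the unique solution of the auxiliary problem~\eqref{eq:sto_heat_z} constructed in Section~\ref{subsec:aux}. Analyticity of $z\mapsto u_z$ on the open strip $S$ has already been furnished by Proposition~\ref{prop:analytic}, so the argument reduces to producing two endpoint bounds, uniform in $\Im z$, on the two boundary lines of the strip. On the line $\Re z=1$, where $|F(z)|=R$, Definition~\ref{def:perturb}~\ref{it:R} guarantees $R(1-\rho)<1$, and Lemma~\ref{lem:elliptic} supplies uniform coercivity with $M=0$ for $(A_z,B_z)$. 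Proposition~\ref{prop:lions} therefore delivers the $\L^2$-endpoint $\|u_z\|_{\V}\lesssim\|f\|_{\E}+\|g\|_{\Egam}$, with constants depending only on $\Lambda(A)$, $\Lambda(B)$, $\lambda$, and $T\vee 1$.

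On the opposite line $\Re z=0$, where $|F(z)|=r$, Corollary~\ref{cor:dist_A} combined with the definition of $A_z$ yields $\|A_z-\mu A_0\|_{\cL(V,V^*)}\leq \mu r(1-\rho)$, while the definition of $B_z$ gives $\Lambda(B_z)=r^{1/2}\Lambda(B)$. I fix $p_1>2$ close enough to $2$ that the constant $C_{p_1}$ from Lemma~\ref{lem:A0apriori} is controlled, using continuity of $C_p$ in $p$; then Definition~\ref{def:perturb}~\ref{it:r} ensures that both hypotheses of Theorem~\ref{thm:perturb} hold for $(A_z,B_z)$ at integrability $p_1$. That theorem produces the $\L^{p_1}$-endpoint $\|u_z\|_{\L^{p_1}(\Omega\times(0,T);V)}\lesssim \|f\|_{\L^{p_1}(\Omega\times(0,T);V^*)}+\|g\|_{\L^{p_1}(\Omega\times(0,T);\HS(U,H))}$, uniformly in $z$ on this line.

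With both endpoint bounds in hand, abstract Stein interpolation between the two lines gives, at every interior $\Re z=\theta\in(0,1)$, a bound for $S_z$ into $\L^{p_\theta}(\Omega\times(0,T);V)$ from the corresponding $\L^{p_\theta}$-data spaces, with $1/p_\theta=(1-\theta)/p_1+\theta/2$. Choosing $\theta_0 = \log(1/r)/\log(R/r)\in (0,1)$ yields $F(\theta_0)=1$, whence $A_{\theta_0}=A$, $B_{\theta_0}=B$, and $u_{\theta_0}$ solves the original problem; setting $p_0:=p_{\theta_0}\in (2, p_1)$ gives the $\L^{p_0}(\Omega\times(0,T);V)$-part of the claimed estimate, and the full range $p\in[2,p_0]$ then follows by Riesz--Thorin interpolation between this $\L^{p_0}$-bound and the $\L^2$-bound of Proposition~\ref{prop:lions}. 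The remaining $\L^p(\Omega;\Cont([0,T];(H,V)_{1-\nicefrac{2}{p},p}))$-component of the $\V^p$-norm is extracted a posteriori from the equation using standard trace results for stochastic maximal regularity on the Gelfand triple $(V,H,V^*)$, and existence/uniqueness of $u\in \V^p$ follow by combining the new a priori estimate with Proposition~\ref{prop:lions} and a density argument.

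The main technical obstacle will be verifying the hypotheses of abstract Stein interpolation in this mixed deterministic-stochastic variational setting: the source and target couples must be chosen so that $[\L^2,\L^{p_1}]_\theta=\L^{p_\theta}$ holds on both sides, and the analyticity of $z\mapsto u_z\in \V$ given by Proposition~\ref{prop:analytic} must be strong enough to fuel the Phragm\'en--Lindel\"of argument underlying Stein's method in this context. A secondary bookkeeping difficulty is the simultaneous calibration of $p_1$, $r$, and $R$: $R>1$ must satisfy $R(1-\rho)<1$, while $r<1$ has to meet both clauses of Definition~\ref{def:perturb}~\ref{it:r} at a common admissible $p_1>2$, which in turn relies on continuity in $p$ of the constants $C_p$ and $c(\Lambda,\mu,T,\eps,p)$ near $p=2$.
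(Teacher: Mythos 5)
Your proposal is correct and follows essentially the same route as the paper: apply abstract Stein interpolation to the analytic family $z\mapsto u_z$ from~\eqref{eq:sto_heat_z}, using the uniform $\L^2$-bound of Proposition~\ref{prop:lions} on one boundary line (where $|F|=R$ and Lemma~\ref{lem:elliptic} supplies coercivity via Definition~\ref{def:perturb}~\ref{it:R}), the $\L^q$-bound of Theorem~\ref{thm:perturb} on the line $\Re z=0$ (where $|F|=r$, so both smallness hypotheses hold via Definition~\ref{def:perturb}~\ref{it:r} and Corollary~\ref{cor:dist_A}), and then choosing $\theta_0=-\log r/\log(R/r)$ to hit $F(\theta_0)=1$, followed by interpolation with the $\L^2$ case to cover the whole range $p\in[2,p_0]$. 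The one cosmetic remark: your worry about ``continuity of $C_p$ in $p$ near $p=2$'' is unnecessary — $q>2$ can be fixed arbitrarily, $C_q$ is then simply a finite constant, and $r$ is chosen small enough relative to it afterwards; this is how Definition~\ref{def:perturb}~\ref{it:r} is meant to be read.
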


	\begin{proof}
		\textbf{Step 1}: Applying abstract Stein interpolation to~\eqref{eq:sto_heat_z}.

		For $z\in \overline{S}$ define an operator $T_z \colon \E \times \Egam \to \V$ which maps a data pair $(f,g)$ to the unique solution $u$ of~\eqref{eq:sto_heat_z}. By linearity of the equation, $T_z$ is linear and the family of operator $(T_z)_{z\in \overline{S}}$ is uniformly bounded according to Proposition~\ref{prop:lions}.

		Fix some $q>2$. We apply the results of the last section with $p$ replaced by $q$. Let $(f,g) \in \Eq \times \Eqgam \subseteq \E \times \Egam$. We claim that $T_{\i t}(f,g) \in \Vq$ uniformly. Indeed, this follows from Theorem~\ref{thm:perturb} if we check its smallness assumptions. Hypothesis~\ref{it:r} follows immediately from Definition~\ref{def:perturb}. Moreover, since $A_0 - \mu^{-1} A_z = F(z)(A_0 - \mu^{-1} A)$, it follows from Corollary~\ref{cor:dist_A} that
		\begin{align}
			C_q \| A_0 - \mu^{-1} A_{\i t} \|_{\cL(V,V^*)} = C_q |F(\i t)| \| A_0 - \mu^{-1} A \|_{\cL(V,V^*)} \leq r C_q (1-\rho).
		\end{align}
		Therefore, by choice of $r$ in Definition~\ref{def:perturb}, $C_q \| A_0 - \mu^{-1} A_{\i t} \|_{\cL(V,V^*)} < 1$ uniformly in $z=\i t$.

		Finally, we have argued in Section~\ref{subsec:analytic} that for fixed $(f,g)$ the map $S \ni z \mapsto T_z(f,g) \in \V$ is holomorphic. In summary, this allows us to invoke abstract Stein interpolation~\cite{AbstractStein} to deduce
		\begin{align}
			\| u_\theta \|_{\V^{q_\theta}} \leq C_\theta \bigl( \| f \|_{\E^{q_\theta}} + \| g \|_{\mathbb{E}^{q_\theta}_{\frac12}} \bigr), \quad \theta \in (0,1), f\in \Eq, g\in \Eqgam,
		\end{align}
		where $\frac{1}{q_\theta} = \frac{1-\theta}{2} + \frac{\theta}{q}$ and $C_\theta = C_L^{1-\theta} (c/\delta)^\theta$, $u_\theta$ is the unique solution to~\eqref{eq:sto_heat_z} with $z=\theta$, and where we use the constants from Proposition~\ref{prop:lions} and Theorem~\ref{thm:perturb}.

		\textbf{Step 2}: Specializing to~\eqref{eq:sto_heat}.

		Choose $\theta = -\frac{\log(r)}{\log(R/r)} \in (0,1)$. Then $F(\theta) = 1$ and we find $A_\theta = A$ and $B_{\theta} = B$. Write $p_0 \coloneqq q_\theta$, then Step~1 shows
		\begin{align}
			\| u \|_{\V^{p_0}} \leq C \bigl( \| f \|_{\E^{p_0}} + \| g \|_{\mathbb{E}^{p_0}_\frac{1}{2}} \bigr), \quad f\in \Eq, g\in \Eqgam,
		\end{align}
		where $2 < p_0 < q$ and $u$ is the unique solution to~\eqref{eq:sto_heat} with data $(f,g)$. By an approximation argument, we extend well-posedness to $(f,g) \in \E^{p_0} \times \mathbb{E}^{p_0}_\frac{1}{2}$.

Now to obtain the statement for $p\in (2, p_0]$, one can either use complex interpolation with the case $p=2$, or lower the value of $r$ in the above proof.
	\end{proof}

	Now we can prove the main regularity result for linear equations stated in Theorem~\ref{thm:main1}.

	\begin{proof}[Proof of Theorem~\ref{thm:main1}]
		By considering $\e^{\lambda t} u(t)$ for suitable $\lambda\in \R$ we can reduce to the case $M=0$ in Assumption~\ref{ass:SP}. Then Proposition~\ref{prop:main} yields a unique strong solution to~\eqref{eq:sto_heat} with $u_0 = 0$ that satisfies the $\L^p$-estimate corresponding to $\theta = 0$ in the theorem. Still with $u_0 = 0$, we can first deduce higher regularity in time from~\cite[Prop.~3.8]{AVloc}, using the reference operators $\wt{B} = 0$ and $\wt{A}=A_0$. In a second step, we include initial values $u_0 \in \L^p_{\cF_0}(\Omega; (H,V)_{1-\nicefrac{2}{p},p})$ in virtue of~\cite[Prop.~3.10]{AVloc}. At both stages,~\cite[Prop.~2.10]{AVloc} can be used to get the maximal estimate missing in the case $\theta = 0$.
	\end{proof}

	\section{Quasilinear problem and well-posedness result}
	\label{sec:main_result2}

	On an open and bounded set $\Dom\subseteq \R^d$ consider the quasilinear system
	\begin{align}
		\label{eq:ql}
		\tag{QL}
		\left\{\quad
		\begin{aligned}
			\d u^\alpha  &= \big[ \partial_i (a^{\alpha\beta}_{ij}(u) \partial_j u^\beta) + \partial_i \Phi^\alpha_i(u) +\phi^\alpha(u) \big] \d t
			\\  & \qquad \qquad
			+ \sum_{n\geq 1}  \big[ b^{\alpha\beta}_{n,j}(u^\beta) \partial_j u^\beta + g^\alpha_{n}(u) \big] \d w_n,\ \ &\text{ on }\Dom,\\
			u^\alpha &= 0, \ \ &\text{ on }\partial\Dom,
			\\
			u^\alpha(0)&=u^\alpha_{0},\ \ &\text{ on }\Dom,
		\end{aligned}\right.
	\end{align}
	where $W = (w_n(t)\,:\,t\geq 0)_{n\geq 1}$ are independent standard Brownian motions on a probability space $\Omega$, $i,j = 1, \dots, d$, and $\alpha,\beta = 1, \dots, N$ for some system size $N \geq 1$. For simplicity, we employed Einstein's summation convention above and in the sequel. We will ignore the system size $N$ in the notation for function spaces if this cannot cause any confusion. Let $\cF$ be the filtration on $\Omega$ induced by $W$.

	Let $h > 1$. For our main result with growing non-linearities, Theorem~\ref{thm:quasi2}, we introduce the following assumption. As an intermediate step, we also treat \enquote{the case $h=1$} in Theorem~\ref{thm:quasi1}, with reduced structural assumptions, and thus of independent interest.

	\begin{ass}[$h$]\label{ass:ql}
		\begin{enumerate}[{\rm(1)}]
			\item \label{ass:2ndorder10} The system is diagonal, that is $a_{ij}^{\alpha\beta}$ and $b_j^{\alpha\beta}$ are non-zero only when $\alpha = \beta$. Write $a_{ij}^{\alpha} \coloneqq a_{ij}^{\alpha\alpha}$ and $b_j^{\alpha} \coloneqq b_j^{\alpha\alpha}$. For fixed $\alpha$, the matrix $a^\alpha$ does not need to be diagonal.
			\item\label{ass:2ndorder11} The coefficients $a^{\alpha}_{ij}\colon (0,T) \times \Dom \times \R^N \to \R$ and $b^{\alpha}_j \coloneqq (b^{\alpha}_{n,j})_{n\geq 1} \colon (0,T) \times \Dom \times \R\to \ell^2$ are continuous in the last component, measurable, and uniformly bounded by a constant $\Lambda$. Moreover, $\sup_{t,x,y} |\partial_j b_{n,j}^{\alpha\beta}(t,x,y)| < \infty$.
			\item\label{ass:2ndorder12}
			One has $a^{\alpha}_{ij} = a^{\alpha}_{ji}$ and there exists $\lambda>0$ such that, for all $t\in (0,T)$, $x\in \Dom$, and $y\in \R^N$,
			$$
			\big(a^{\alpha}_{ij}(t,x,y)-\frac12 b^{\alpha}_{n,i}(t,x,y^\alpha) b^{\alpha}_{n,j}(t,x,y^\alpha) \big)
			\xi_i^\alpha \xi_j^\alpha
			\geq  \lambda |\xi|^2, \quad \xi\in \R^{dN}.
			$$
			\item\label{ass:2ndorder13} The non-linearities $\Phi^\alpha_i \colon (0,T) \times \Dom \times \R^N \to \R$ and $\phi^\alpha \colon (0,T) \times \Dom \times \R^N \to \R$ are measurable, locally Lipschitz in the last component, and satisfy the following growth condition: there exists a constant $C$ such that,
			\begin{align}\label{eq:condPhiphi} \tag{G}
				|\Phi(t,x,y)| + |\phi(t,x,y)|&\leq C(1+|y|^{h}), \quad  t\in (0,T), x\in \Dom,  y\in \R^N.
			\end{align}
			\item\label{ass:2ndorder14} The non-linearity $\phi$ is dissipative in the following sense: there exists a constant $C$ such that one has for all $t\in (0,T)$, $x\in \Dom$, $y\in \R^N$ and $\alpha=1,\dots,N$ that
			\begin{align}\label{eq:phi_dissip}
				\phi^\alpha(t,x,y) y^\alpha \leq C(|y|^2+1).
			\end{align}
The nonlinearity $\Phi$ is dissipative in the following sense: \[\Phi^{\alpha}(t,x,y) = \overline{\Phi}^{\alpha}(y^{\alpha}) + \wh{\Phi}^\alpha(t,x,y),\]
where $\overline{\Phi}_i:\R\to \R^N$ is measurable, and $\wh{\Phi}_i:(0,T) \times \Dom \times \R^N\to \R^N$ is measurable and Lipschitz continuous in the last component (uniformly with respect to $t$ and $x$). 
			\item\label{ass:2ndorder15} The stochastic non-linearity $g^\alpha \coloneqq (g^\alpha_n)_{n \geq 1} \colon (0,T) \times \Dom \times \R^N \to \ell^2$ is measurable, Lipschitz continuous and of linear growth in the last component (uniformly with respect to the first two components).
		\end{enumerate}
	\end{ass}

	\begin{remark}
		Some discussion of the hypotheses is in order.
		\begin{enumerate}
			\item When we work with non-linearities of linear growth, see Section~\ref{sec:ql_lip}, then the diagonal structure assumed in~\ref{ass:2ndorder10} is not needed. This is why~\eqref{eq:ql} is formulated for the non-diagonal case.
			\item The non-linearities for $a$, $\Phi$ and $\phi$ lead to a coupling between the components even when working with a diagonal structure.
			\item To identify the limit in the stochastic compactness method, it seems crucial that $b_j^{\alpha\beta}$ only depends on the $\beta$th component. In fact, it is a surprising strength of our approach that this structural assumption is not needed for $a$.
		\end{enumerate}
	\end{remark}

	\begin{remark}[Diagonal structure]
		\label{rem:diagonal}
		Instead of imposing a diagonal structure, we could also strengthen the coercivity condition as follows: for $q\in [0,2(h-1)+\eps]$ assume that there exists $\lambda_q>0$ such that, for all $t\in (0,T)$, $x\in \Dom$, and $y\in \R^N$,
		$$
		|y^\alpha|^q \Big(a^{\alpha}_{ij}(t,x,y)-\frac12 b^{\alpha}_{n,i}(t,x,y^\alpha) b^{\alpha}_{n,j}(t,x,y^\alpha) \Big)
		\xi_i^\alpha \xi_j^\alpha
		\geq  \lambda_q |y^\alpha|^q|\xi^\alpha|^2 \quad \text{ for all }\xi\in \R^{dN}.
		$$
		For instance, such a condition can be ensured for perturbations of diagonal systems in which $|a^{\alpha\beta}_{ij}| \leq c (1+|y|^q)^{-1}$ whenever $\alpha \neq \beta$, where $c$ is sufficiently small. For clarity of exposition, we only work with the diagonal case. This case is also the most important one in reaction--diffusion equations. However, we do not know whether the structural condition we have made is essential. It is needed at a technical point in the proof in the identification of the limit in the stochastic compactness method.
	\end{remark}

	To make precise the homogeneous Dirichlet boundary condition in~\eqref{eq:ql}, we introduce the following function spaces.
	\begin{definition}
		Write $\H^1_0(\Dom)$ for the closure in $\H^1(\Dom)$ of $\Cont^\infty_0(\Dom)$, the smooth and compactly supported functions in $\Dom$. Moreover, for $s\in (0,1)$ put
		\begin{align}
			\H^s_0(\Dom) \coloneqq [\L^2(\Dom), \H^1_0(\Dom)]_s \quad \& \quad \B^s_{2,p,0}(\Dom) \coloneqq (\L^2(\Dom), \H^1_0(\Dom))_{s,p}.
		\end{align}
	\end{definition}

	\begin{remark}
		The spaces $\H^s_0(\Dom)$ and $\B^s_{2,p,0}(\Dom)$ can often be identified with closed subspaces of the usual Bessel potential and Besov spaces on $\Dom$.
	\end{remark}

	The main result on the quasi-linear system \eqref{eq:ql} reads as follows.

	\begin{theorem}[Weak existence]\label{thm:quasi2}
		Let $h > 1$. Suppose Assumption \ref{ass:ql} holds. Then there is $p_0 > 2$, depending only on $C, \Lambda, \lambda$ from the assumption as well as dimensions, such that, if $p\in (2, p_0]$ and $q\in (ph, \infty)$, then, given $u_0\in \L^{p}_{\cF_0}(\Omega;\B^{1-\nicefrac2p}_{2,p,0}(\Dom))\cap \L^q_{\cF_0}(\Omega \times \Dom)$, there exists a weak solution $(\wt{u}, \wt{W}, \wt{\Omega}, \wt{\cF}, \wt{\P}, (\wt{\cF}_t)_{t\geq 0})$ of \eqref{eq:ql} in the sense of Definition~\ref{def:weak_sol}. One has the estimates
		\begin{align}
			\label{eq:quasi2a} \| \wt{u} \|_{\L^p(\wt{\Omega}; \Cont([0,T]; \B^{1-\nicefrac{2}{p}}_{2,p,0}(\Dom)))}&\lesssim_\theta 1+\|u_0\|_{\L^{q}(\Omega;\L^{q}(\Dom))}^h + \|u_0\|_{\L^{p}(\Omega;\B^{1-\nicefrac2p}_{2,p,0}(\Dom))}, \\
			\label{eq:quasi2b} \|\wt{u} \|_{\L^q(\wt{\Omega} \times (0,T) \times \Dom)} &\lesssim 1 + \|u_0\|_{\L^{q}(\Omega \times \Dom)}^h.
		\intertext{Moreover, for all $\theta\in [0,1/2)$ one has the bound}
			\label{eq:quasi2c}\|\wt{u}\|_{\L^{p}(\wt{\Omega}; \H^{\theta,p}(0,T;\H^{1-2\theta}_0(\Dom)))}&\lesssim_\theta 1+\|u_0\|_{\L^{q}(\Omega;\L^{q}(\Dom))}^h + \|u_0\|_{\L^{p}(\Omega;\B^{1-\nicefrac2p}_{2,p,0}(\Dom))}.
		\end{align}
	\end{theorem}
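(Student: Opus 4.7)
The approach is the stochastic compactness method, using Theorem~\ref{thm:quasi1} (the Lipschitz case treated in Section~\ref{sec:ql_lip}) as a building block and the universal compactness Theorem~\ref{thm:varcomp} as the source of tightness. For each $n\in\N$, I would truncate $\Phi$ and $\phi$ by composing with a smooth cut-off $\chi_n\colon\R^N\to\R^N$ that equals the identity on $\{|y|\leq n\}$ and vanishes outside $\{|y|\leq 2n\}$; writing $\Phi_n = \Phi\circ\chi_n$ and $\phi_n = \phi\circ\chi_n$, the truncated nonlinearities are globally Lipschitz and bounded while preserving pointwise the dissipativity of Assumption~\ref{ass:ql}\ref{ass:2ndorder14}. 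Theorem~\ref{thm:quasi1} then supplies a weak solution $u_n$ of the truncated system for every $n$, and the task reduces to deriving uniform estimates strong enough to extract a converging subsequence and identify the limit.

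The key uniform estimate is the $\L^q$-bound~\eqref{eq:quasi2b}, obtained by an It\^o energy estimate. Applying It\^o's formula to $\int_\Dom |u_n^\alpha|^q$ (after a suitable regularization) and summing over $\alpha$, the second-order and It\^o-correction terms produce a non-negative quantity thanks to the diagonal structure~\ref{ass:2ndorder10} and the strengthened coercivity noted in Remark~\ref{rem:diagonal}; the drift $\phi_n(u_n)$ is absorbed via~\eqref{eq:phi_dissip}; and the divergence-form term is split according to $\Phi=\overline{\Phi}+\widehat{\Phi}$, where the antiderivative structure of $\overline{\Phi}^\alpha(y^\alpha)$ makes its contribution vanish after integration by parts against $|u_n^\alpha|^{q-2}u_n^\alpha$, while $\widehat{\Phi}$ is controlled by Young's inequality using its Lipschitz character. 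Gronwall then yields~\eqref{eq:quasi2b} uniformly in $n$. For~\eqref{eq:quasi2a} and~\eqref{eq:quasi2c}, I would reinterpret the truncated system as the linear problem~\eqref{eq:sto_heat} with coefficients $\wt{A}_n = A(u_n)$, $\wt{B}_n = B(u_n)$ and forcings $f_n = \partial_i\Phi_n(u_n)+\phi_n(u_n)$, $g_n = g(u_n)$. By the growth condition~\eqref{eq:condPhiphi} and the $\L^q$-bound of the previous step (here is where $q>ph$ matters), $f_n$ and $g_n$ are uniformly bounded in the data spaces of Theorem~\ref{thm:main1}, and the symmetry of $a^\alpha_{ij}$ ensures $\wt{A}_n^* = \wt{A}_n$. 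Theorem~\ref{thm:main1} then delivers~\eqref{eq:quasi2a} and~\eqref{eq:quasi2c} uniformly in $n$.

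With these uniform estimates in hand, Theorem~\ref{thm:varcomp} gives tightness of $\{\Law(u_n)\}$ on $\Cont([0,T];\L^2(\Dom))$, and Skorohod's representation (in Jakubowski's generalized form, to accommodate the additional weak topologies inherited from~\eqref{eq:quasi2a}--\eqref{eq:quasi2c}) provides a new stochastic basis $(\wt{\Omega},\wt{\cF},\wt{\P},(\wt{\cF}_t)_t,\wt{W})$ and almost-sure limits $\wt{u}_n\to\wt{u}$ in $\Cont([0,T];\L^2(\Dom))$. The zero-order nonlinearities $\Phi_n(\wt{u}_n)$, $\phi_n(\wt{u}_n)$, $g_n(\wt{u}_n)$ pass to their natural limits by Vitali's convergence theorem, whose equi-integrability hypothesis is supplied precisely by the uniform $\L^q$-bound with $q>ph$ against the $h$-growth in~\eqref{eq:condPhiphi}. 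The main obstacle is identification of the quasilinear second-order and stochastic terms, where only weak compactness of $\nabla\wt{u}_n$ is available: for the diffusive term, strong $\L^2_{t,x,\omega}$-convergence of $a^{\alpha\beta}_{ij}(\wt{u}_n)$ combined with weak convergence of $\partial_j\wt{u}_n^\beta$ suffices after pairing with a test function; but for the transport-noise term $b^{\alpha\beta}_{n,j}(\wt{u}_n^\beta)\partial_j\wt{u}_n^\beta$, the diagonal structure~\ref{ass:2ndorder10} is crucial, as it ensures that the coefficient depends on the very component being differentiated, which is what enables the standard strong/weak martingale identification. This is the step that truly forces the structural hypotheses of the theorem, and in its absence no monotone or compensated-compactness argument appears to be available.
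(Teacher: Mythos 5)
Your overall strategy coincides with the paper's: truncate the nonlinearities to reach the Lipschitz case, derive a uniform $\L^q$ bound by an It\^o energy argument, freeze the truncated solution to obtain uniform $\L^p$ bounds from Theorem~\ref{thm:main1}, and conclude by the stochastic compactness method on a Polish space that carries both the strong $\Cont([0,T];\L^2)$ topology and the weak $\L^p(0,T;\H^1_0)$ topology. Two points of divergence are worth noting. First, you truncate via a smooth radial cut-off rather than the paper's ball projection $P_R$. Both can be made to work, but your assertion that the cut-off ``preserves pointwise the dissipativity'' is not automatic: because the scaling factor $\eta_n(|y|)$ passes through all values in $(0,1)$ on the annulus $n<|y|<2n$, the naive estimate $\phi^\alpha(\chi_n(y))y^\alpha\leq C\eta_n|y|^2+C/\eta_n$ degenerates as $\eta_n\to 0$ while $|y|$ stays bounded, and one must supplement it with the growth bound $|\phi|\leq C(1+|y|^h)$ in the regime where $\eta_n|y|$ is small. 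The paper's projection avoids this entirely since it never shrinks the argument towards the origin. You also do not address how the decomposition $\Phi^\alpha=\overline{\Phi}^\alpha(y^\alpha)+\wh{\Phi}^\alpha(y)$ from Assumption~\ref{ass:ql}\ref{ass:2ndorder14} behaves under composition with the cut-off, which requires a little care since $\chi_n^\alpha(y)$ is not a function of $y^\alpha$ alone.

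The genuine gap is in the identification of the transport-noise term, where your appeal to ``the standard strong/weak martingale identification'' does not cover what actually must be shown. In Lemma~\ref{lem:martingales1} one needs almost-sure convergence of $\int_0^t\lb b^{\alpha\beta}_{n,j}(\wt{u}^\beta_m)\partial_j\wt{u}^\beta_m,\xi\rb^2\,\d r$; since this expression is quadratic in $\nabla\wt{u}_m$, the strong-times-weak pairing that disposes of the $a$-term is unavailable (a weak limit of a square need not be the square of the weak limit). The paper resolves this with the antiderivative trick: introducing $\bar b^{\alpha\beta}_{n,j}(r,x,y)=\int_0^y b^{\alpha\beta}_{n,j}(r,x,z)\d z$ and rewriting $b^{\alpha\beta}_{n,j}(\wt u^\beta)\partial_j\wt u^\beta=\partial_j\bar b^{\alpha\beta}_{n,j}(\wt u^\beta)-\Theta^{\alpha\beta}_{n,j}(\cdot,\wt u^\beta)$, so that after integrating by parts against $\xi$ the expression depends on $\wt u^\beta$ alone and strong convergence in $\Cont([0,T];\L^2(\Dom))$ suffices. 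This is the step that makes the structural requirement that $b^{\alpha\beta}_{n,j}$ depends only on the differentiated component $y^\beta$ essential; note that this is not the same as the diagonal structure of Assumption~\ref{ass:ql}\ref{ass:2ndorder10}, which you conflate with it: the diagonal structure ($a^{\alpha\beta}=b^{\alpha\beta}=0$ for $\alpha\neq\beta$) is used only in Proposition~\ref{prop:ito_trace} to apply the coercivity componentwise against the weights $\psi_m''(v^\alpha)$, whereas the $y^\beta$-only dependence of $b^{\alpha\beta}_{n,j}$ is already built into the form of~\eqref{eq:ql} and is what makes the antiderivative trick possible, even in the non-diagonal Lipschitz case of Section~\ref{sec:ql_lip}. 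Without this trick, the identification of the limit for the quadratic variation would fail, so this is a missing idea rather than a matter of exposition.
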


	Our notion of a weak solution employed in Theorem~\ref{thm:quasi2} is the following.

	\begin{definition}\label{def:weak_sol}
		Call the tuple $(\wt{u}, \wt{W}, \wt{\Omega}, \wt{\cF}, \wt{\P}, (\wt{\cF}_t)_{t\geq 0})$ a \emph{weak solution} of \eqref{eq:ql} if  $\wt{W}=(\wt{w}_n)_{n\geq 1}$ is an $\ell^2$-cylindrical Brownian motion on $\wt{\Omega}$, $\wt{u}(0)$ has the same distribution as $u_0$,
		$\wt{u}\in \L^2(0,T;\H^1_0(\Dom)) \cap \Cont([0,T];\L^2(\Dom))$ almost surely and is $(\wt{\cF}_t)$-progressively measurable,
		$\Phi^\alpha_i(\wt{u}), \phi^\alpha(\wt{u})\in \L^1(0,T;\L^1(\Dom))$, $g(\wt{u}) \in \L^2(0,T;\L^2(\Dom;\ell^2))$, and for all $\xi\in \Cont^\infty_0(\Dom)$ and $t\in [0,T]$, one has almost surely
		\begin{align}
			\lb \wt{u}^\alpha(t), \xi\rb  = \lb \wt{u}^\alpha(0),\xi\rb &+ \int_0^t -\lb a^{\alpha\beta}_{ij}(\wt{u}) \partial_j \wt{u}, \partial_i \xi\rb  - \lb \Phi^\alpha_i(\wt{u}), \partial_i \xi\rb + \lb \phi^\alpha(\wt{u}), \xi\rb \d t
			\\
			&+ \sum_{n\geq 1}  \int_0^t \lb b^{\alpha\beta}_{n,j}(\wt{u}) \partial_j \wt{u}, \xi\rb + \lb g^\alpha_{n}(\wt{u}),\xi\rb \d w_n.
		\end{align}
	\end{definition}

	The proof of Theorem~\ref{thm:quasi2} will occupy the rest of this article.

\begin{remark}
In the scalar case $N=1$ with $b = 0$ and $\phi=0$ a result such as Theorem \ref{thm:quasi2} is proved in \cite{DHV} in the case of periodic boundary conditions. Moreover, even the degenerate case and initial data in $\L^1$ are considered via a different solution concept called kinetic solutions. In this framework uniqueness is proved as well. It would be interesting to see if some of these results can be obtained in the generality of Theorem \ref{thm:quasi2}.

In the scalar case $N=1$ one can also make a comparison to \cite[Example 4.1]{rockner2022well} in case the parameter $\alpha$ used there satisfies $\alpha=2$. The main differences in this special case are that, in Theorem  \ref{thm:quasi2}, we allow systems. Moreover, our condition on the gradient noise is more flexible, and for $N=1$ it coincides with the classical stochastic parabolicity condition (cf.\ \cite[Assumption 5.9(2)]{AVvar}). To compare the condition on the gradient noise more precisely, note that the condition in \cite[Theorem 3.2]{rockner2022well} reduces to $\chi L_B<2L_A$, where $\chi\geq h+1$, where $h\geq 1$ is the growth of our nonlinearity. Moreover, $L_A$ and $L_B$  are uniform constants over the coefficients. Our condition in Assumption \ref{ass:ql}\ref{ass:2ndorder12} is pointwise in $x$ and $y$ instead of uniform. Moreover, the growth of $\phi$ and $\Phi$ does not influence this coercivity condition in any way. 
\end{remark}

	\section{Quasilinear problem with Lipschitz non-linearities}
	\label{sec:ql_lip}

	First, we study~\eqref{eq:ql} under the following set of hypotheses. In Section~\ref{sec:ql_growing} we are going to reduce the general case to the current one in virtue of an approximation procedure.

	\begin{ass}\label{ass:ql_lip}
		\begin{enumerate}[{\rm(1)}]
			\item\label{ass:2ndorder21} The coefficient functions $a^{\alpha\beta}_{ij}\colon (0,T) \times \Dom \times \R^N \to \R$ and $b^{\alpha\beta}_j \coloneqq (b^{\alpha\beta}_{n,j})_{n\geq 1} \colon (0,T) \times \Dom \times \R\to \ell^2$ are continuous in the last component, measurable, and uniformly bounded by a constant $\Lambda$.
Moreover, $\sup_{t,x,y} |\partial_j b_{n,j}^{\alpha\beta}(t,x,y)| < \infty$. \item\label{ass:2ndorder22}
			One has $a^{\alpha\beta}_{ij} = a^{\beta\alpha}_{ji}$ and there exists $\lambda>0$ such that, for all $t\in (0,T)$, $x\in \Dom$, and $y\in \R^N$,
			$$
			\big(a^{\alpha\beta}_{ij}(t,x,y)-\frac12 b^{\gamma\alpha}_{n,i}(t,x,y^\alpha) b^{\gamma\beta}_{n,j}(t,x,y^\beta) \big)
			\xi_i^\alpha \xi_j^\beta
			\geq  \lambda |\xi|^2 \quad \text{ for all }\xi\in \R^{dN}.
			$$
			\item\label{ass:2ndorder23} The non-linearities $\Phi^\alpha_i \colon (0,T) \times \Dom \times \R^N \to \R$ and $\phi^\alpha \colon (0,T) \times \Dom \times \R^N \to \R$, and the stochastic non-linearity $g^\alpha \coloneqq (g^\alpha_n)_{n \geq 1} \colon (0,T) \times \Dom \times \R^N \to \ell^2$ are measurable, Lipschitz continuous and of linear growth in the last component (uniformly with respect to the first two components).
		\end{enumerate}
	\end{ass}

	Observe that Assumption~\ref{ass:ql_lip}~\ref{ass:2ndorder23} is the combination of Assumption~\ref{ass:ql}~\ref{ass:2ndorder13} with $h=1$ and Assumption~\ref{ass:ql}~\ref{ass:2ndorder15}. Assumption~\ref{ass:ql}~\ref{ass:2ndorder14} becomes obsolete.

	We are going to show the following well-posedness result.

	\begin{theorem}
		\label{thm:quasi1}
		Suppose Assumption \ref{ass:ql_lip} holds. There is $p_0 > 2$, depending only on $C, \Lambda, \lambda$ from the assumption as well as dimensions, such that if $p\in (2, p_0]$, then, given $u_0\in \L^{p}_{\cF_0}(\Omega;\B^{1-\nicefrac2p}_{2,p,0}(\Dom))$, there exists a weak solution $(\wt{u}, \wt{W}, \wt{\Omega}, \wt{\cF}, \wt{\P}, (\wt{\cF}_t)_{t\geq 0})$ of \eqref{eq:ql}. Moreover, for all $\theta\in [0,1/2)$ one has
		\begin{align}
			\label{eq:quasi1a}
			\| \wt{u} \|_{\L^p(\wt{\Omega}; \Cont([0,T]; \B^{1-\nicefrac{2}{p}}_{2,p,0}(\Dom)))} + \|\wt{u}\|_{\L^{p}(\wt{\Omega};\H^{\theta,p}(0,T;\H^{1-2\theta}_0(\Dom)))}\lesssim_\theta 1+ \|u_0\|_{\L^{p}(\Omega;\B^{1-\nicefrac2p}_{2,p,0}(\Dom))}.
		\end{align}
	\end{theorem}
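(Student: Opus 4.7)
The strategy is the classical stochastic compactness method, assembled around the linear results of Theorems~\ref{thm:main1} and~\ref{thm:varcomp}: build approximate solutions by freezing the quasilinear coefficients, extract uniform $\L^p$-bounds from the extrapolation theorem, pass to tight laws via the universal compactness result, and identify the limit as a weak solution. For the approximation, I would mollify the coefficients $a^{\alpha\beta}_{ij}$ and $b^{\alpha\beta}_j$ in the last variable so that they become smooth while retaining the uniform boundedness and coercivity from Assumption~\ref{ass:ql_lip} (since the mollification is only in the argument at which they are evaluated, the pointwise ellipticity is preserved), and then run an iteration scheme: given $u^{(n)}$, define $u^{(n+1)}$ as the unique strong solution (Proposition~\ref{prop:lions}) of the linear equation obtained by evaluating $a,b,\Phi,\phi,g$ at $u^{(n)}$. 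Each $u^{(n+1)}$ then solves a genuine linear variational equation on $V=\H^1_0(\Dom)$ whose operators $A(\cdot,u^{(n)})$ and $B(\cdot,u^{(n)})$ satisfy Assumption~\ref{ass:SP} with the constants $\lambda,\Lambda$ from Assumption~\ref{ass:ql_lip} uniformly in $n$, and for which $A^* = A$ by the symmetry $a^{\alpha\beta}_{ij}=a^{\beta\alpha}_{ji}$.

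\textbf{Uniform estimates and tightness.} Apply Theorem~\ref{thm:main1} to the $n$-th iterate. The linear growth of $\Phi,\phi,g$ together with the boundedness of $\Dom$ yields
\[\|\partial_i \Phi(u^{(n)})+\phi(u^{(n)})\|_{\Ep}+\|g(u^{(n)})\|_{\Epgam}\lesssim 1+\|u^{(n)}\|_{\L^p(\Omega\times(0,T);\L^2(\Dom))},\]
and an absorption argument performed on sufficiently short time intervals that are then glued produces $n$-independent bounds in $\L^p(\Omega;\Cont([0,T];\B^{1-2/p}_{2,p,0}(\Dom)))$ and in $\L^p(\Omega;\H^{\theta,p}(0,T;\H^{1-2\theta}_0(\Dom)))$. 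The compact embedding $\H^1_0(\Dom)\hookrightarrow \L^2(\Dom)$ and Theorem~\ref{thm:varcomp} then deliver tightness of $\mathscr{L}(u^{(n)})$ in $\Cont([0,T];\L^2(\Dom))$, and Jakubowski's extension of Skorohod's theorem provides a new stochastic basis on which $(\tilde u_n,\tilde W_n)\to(\tilde u,\tilde W)$ almost surely in $\Cont([0,T];\L^2(\Dom))\times \Cont([0,T];\R^\infty)$, with $\tilde u$ inheriting all of the uniform $\L^p$-bounds above.

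\textbf{Identification.} After passing to a further subsequence one has $\tilde u_n\to\tilde u$ almost everywhere on $\wt\Omega\times(0,T)\times\Dom$. Continuity and boundedness of $a,b$, together with the bounded convergence theorem, upgrade this to strong convergence $a(\tilde u_n)\to a(\tilde u)$ and $b(\tilde u_n)\to b(\tilde u)$ in every $\L^r$ with $r<\infty$. Combined with weak convergence $\nabla\tilde u_n\rightharpoonup \nabla\tilde u$ in $\L^2(\wt\Omega\times(0,T)\times\Dom)$, inherited from the uniform $\H^1_0$-bound, this lets one pass to the limit in the deterministic terms $\langle a(u)\partial_j u,\partial_i\xi\rangle$, $\langle \Phi(u),\partial_i\xi\rangle$, $\langle\phi(u),\xi\rangle$ tested against $\xi\in\Cont^\infty_0(\Dom)$. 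The bound \eqref{eq:quasi1a} follows by weak lower semicontinuity together with the fact that the law of $\tilde u$ coincides with that of the weak limit of $u^{(n)}$.

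\textbf{Main obstacle.} The delicate point is the stochastic integral $\int_0^t b^{\alpha\beta}_{n,j}(\tilde u_n^\beta)\partial_j\tilde u_n^\beta\,\d\tilde w_{n,m}$, because both the integrand and the driving Brownian motion vary with $n$. The structural assumption that $b^{\alpha\beta}_j$ depends only on the single component $y^\beta$ curbs the coupling, and combining strong convergence of $b(\tilde u_n^\beta)$ in $\L^r$, weak convergence of $\partial_j\tilde u_n$ in $\L^2$, and uniform convergence of $\tilde W_n$ identifies the Itô integral against $\tilde W$ in the limit. What makes this argument go through is precisely the $\L^p$-integrability with $p>2$ supplied by Theorem~\ref{thm:main1}: it furnishes the uniform integrability needed for Vitali's theorem to promote convergence in probability of the stochastic integrals to convergence in $\L^2$, a feature unavailable in the classical $\L^2$-theory. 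This is exactly where the extrapolation result pays off in the nonlinear setting and motivates the whole construction.
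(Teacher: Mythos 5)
Your overall scaffolding (mollify coefficients, extract uniform $\L^p$-bounds via the linear extrapolation theorem, tightness in $\Cont([0,T];\L^2)$, Skorohod, identify the limit) matches the paper's. However there are two genuine gaps.

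\textbf{The Picard iteration breaks the identification step.} You propose to build approximants by iterating $u^{(n)} \mapsto u^{(n+1)}$, where $u^{(n+1)}$ solves a \emph{linear} equation with coefficients frozen at $u^{(n)}$. The problem is that after Skorohod you obtain $\wt{u}^{(n)} \to \wt{u}$ and $\wt{u}^{(n+1)} \to \wt{u}'$, but nothing forces $\wt{u} = \wt{u}'$. There is no contraction (the whole point of the compactness method is that fixed-point arguments fail), so the sequence $u^{(n)}$ need not converge, and you cannot conclude that the coefficient index and the solution index converge to the same limit. Consequently you cannot pass from the linear equation for $\wt{u}^{(n+1)}$ with coefficient $a(\cdot,\wt{u}^{(n)})$ to the quasilinear equation for $\wt{u}$. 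The paper avoids this entirely: for each mollification level $m$ it solves the actual \emph{quasilinear} problem \eqref{eq:ql_m} (this requires the critical variational well-posedness of~\cite{AVvar}, verified in Lemma~\ref{lem:ql-reg} under the extra regularization $R_m$ of the argument), so that the same $\wt{u}_m$ appears both inside the coefficients and as the solution, and both converge to the same $\wt{u}$. Uniform $\L^p$-bounds are then obtained a posteriori (Proposition~\ref{prop:unif_bounds1}) by \emph{freezing the solution in its own equation}, turning it into a semilinear problem to which Theorem~\ref{thm:main1} applies; this is a bootstrap, not an a priori estimate on iterates.

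\textbf{The gradient noise term does not pass to the limit the way you claim.} You argue that strong $\L^r$-convergence of $b(\wt{u}_n^\beta)$ and weak $\L^2$-convergence of $\partial_j\wt{u}_n$ suffice to identify the stochastic integral. That combination only gives weak convergence of the product $b(\wt{u}_n^\beta)\partial_j\wt{u}_n^\beta$, which is not enough for the stochastic integral, and certainly not enough for the square of this pairing appearing in the quadratic variation (a square is not weakly continuous). The paper circumvents a direct passage in the stochastic integral by proving a martingale characterization (Lemma~\ref{lem:martingales1}), and the product in the quadratic variation is handled by the antiderivative trick: writing $\bar{b}^{\alpha\beta}_{n,j}(y)=\int_0^y b^{\alpha\beta}_{n,j}(z)\,\d z$ one has $b^{\alpha\beta}_{n,j}(\wt{u})\partial_j\wt{u} = \partial_j\bar{b}^{\alpha\beta}_{n,j}(\wt{u}) - \Theta^{\alpha\beta}_{n,j}(\cdot,\wt{u})$, and after integration by parts against $\xi$ only \emph{strong} convergence of $\wt{u}_m$ in $\Cont([0,T];\L^2(\Dom))$ is needed, which tightness provides. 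It is this rewriting, together with the structural assumption that $b^{\alpha\beta}_j$ depends on the single component $y^\beta$, that makes the argument go through; the $p>2$ integrability then supplies the uniform integrability for Vitali, as you correctly identify. Without the antiderivative trick, your identification step does not close.

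A minor point: convergence of the cylindrical Brownian motion requires embedding $U=\ell^2$ into a larger Hilbert space $\BMSpace_0$ via a Hilbert--Schmidt map (so that $W$ has continuous paths there); $\Cont([0,T];\R^\infty)$ is not quite the right space but this is easily fixed.
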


	\subsection{Smoothing of the coefficients}
	\label{subsec:smoothing}

	Let $\zeta \colon \R \to [0,\infty)$ be a smooth and compactly supported function with integral equal to $1$ and $\rho \colon \R^N \to [0,\infty)$ the $N$-fold product of $\zeta$. For $m \geq 1$, write $\zeta_m$ and $\rho_m$ for the induced mollifier sequences.
	Now, define the quasi-linear operators
	\begin{align}
		{}_m A(t) \colon \L^2(\Dom)^N &\to \cL(\H^1_0(\Dom)^N, \H^{-1}(\Dom)^N), \\
		{}_m B(t) \colon \L^2(\Dom) &\to \cL(\H^1_0(\Dom)^N,\cL_2(\ell^2, \L^2(\Dom)^N))
	\end{align}
	by
	\begin{align}
		{}_m A(t,v) u &= \bigl( \partial_i ({}_m a_{ij}^{\alpha\beta}(t,x,R_m v) \partial_j u^\beta) \bigr)_{\alpha=1}^N, \qquad (u\in \H^1_0(\Dom)^N, v\in \L^2(\Dom)^N), \\
		\lb {}_m B(t,w)u, e_n \rb &= \bigl( {}_m b_{n,j}^{\alpha\beta}(t,x,R_m w^\beta) \partial_j u^\beta \bigr)_{\alpha=1}^N, \qquad (u\in \H^1_0(\Dom)^N, w\in \L^2(\Dom)),
	\end{align}
	where $R_m f = \rho_m \ast E f$, ${}_m a_{ij}^{\alpha\beta}(t,x,\cdot) = \rho_m \ast a_{ij}^{\alpha\beta}(t,x,\cdot)$ and ${}_m b_{n,j}^{\alpha\beta}(t,x,\cdot) = \zeta_m \ast b_{n,j}^{\alpha\beta}(t,x,\cdot)$, where the convolution is taken with respect to the third variable. Here, $E$ denotes the zero extension of a given function, and we apply the convolution in the definition of $R_m$ component-wise if $f$ is vector-valued.

	Consider now the family of equations

	\begin{align}
		\label{eq:ql_m}
		\tag{m-QL}
		\left\{\quad
		\begin{aligned}
			\d u  &= \big[ {}_m A(u)u + \partial_i \Phi_i(u) +\phi(u) \big] \d t
			\\  & \qquad \qquad
			+ \sum_{n\geq 1}  \big[ \lb {}_m B(u)u, e_n \rb + g_{n}(u) \big] \d w_n,\ \ &\text{ on }\Dom,\\
			u &= 0, \ \ &\text{ on }\partial\Dom,
			\\
			u(0)&=u_{0},\ \ &\text{ on }\Dom.
		\end{aligned}\right.
	\end{align}

	Following~\cite[p.~211]{KryAnalytic} we show that the smoothed coefficients satisfy the same coercivity condition as the original ones.

	\begin{lemma}
		\label{lem:smooth_elliptic}
		For fixed $m$, the smoothed coefficients ${}_m a_{ij}^{\alpha\beta}$ and ${}_m b_{n,j}^{\alpha\beta}$ satisfy Assumptions~\ref{ass:ql_lip}~\ref{ass:2ndorder21} and~~\ref{ass:2ndorder22} with the same constants.
	\end{lemma}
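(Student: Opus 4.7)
The verification splits into routine properties and a coercivity estimate, the latter being the substantive step.

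For the routine part of Assumption~\ref{ass:ql_lip}\ref{ass:2ndorder21} together with the symmetry $a^{\alpha\beta}_{ij} = a^{\beta\alpha}_{ji}$, everything follows from classical properties of mollification. The uniform bound $\Lambda$ transfers to ${}_m a$ and ${}_m b$ because $\zeta_m$ and $\rho_m$ are probability densities, so convolution preserves $L^\infty$-bounds; measurability in $(t,x)$ follows from Fubini; continuity in the last variable becomes smoothness; the derivative in $\partial_j b^{\alpha\beta}_{n,j}$ is a spatial derivative that commutes with convolution in the last variable; and symmetry only involves the first four indices and is therefore preserved.

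For the coercivity, my plan is to realize the mollifications as expectations. On an auxiliary probability space, introduce a random vector $W = (W^1, \dots, W^N)$ whose components are iid with common density $\zeta_m$, so that $W$ has density $\rho_m$. The tensor-product structure of $\rho_m$, together with the fact that $b^{\gamma\alpha}_{n,i}$ depends only on the $\alpha$-th scalar component of its last argument, gives the representations
\[
{}_m a^{\alpha\beta}_{ij}(t, x, y) = \Exp [a^{\alpha\beta}_{ij}(t, x, y - W)], \qquad {}_m b^{\gamma\alpha}_{n,i}(t, x, y^\alpha) = \Exp [b^{\gamma\alpha}_{n,i}(t, x, y^\alpha - W^\alpha)].
\]
Apply the original coercivity inequality pointwise at the random point $y - W$ and take expectations to obtain
\[
\Big({}_m a^{\alpha\beta}_{ij}(t, x, y) - \tfrac{1}{2} \Exp \bigl[\textstyle \sum_n b^{\gamma\alpha}_{n,i}(y^\alpha - W^\alpha) b^{\gamma\beta}_{n,j}(y^\beta - W^\beta) \bigr]\Big) \xi^\alpha_i \xi^\beta_j \geq \lambda |\xi|^2.
\]

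To close the proof I still need to dominate the expectation on the left by the desired product $\sum_n {}_m b^{\gamma\alpha}_{n,i}(y^\alpha) {}_m b^{\gamma\beta}_{n,j}(y^\beta) \xi^\alpha_i \xi^\beta_j$. Setting $X_n^\gamma \coloneqq \sum_{\alpha, i} b^{\gamma\alpha}_{n,i}(t, x, y^\alpha - W^\alpha) \xi^\alpha_i$, which is real, expanding the square shows that the expectation form equals $\sum_{n, \gamma} \Exp [(X_n^\gamma)^2]$. By linearity of expectation the product form becomes $\sum_{n, \gamma} (\Exp X_n^\gamma)^2$. The required bound is therefore exactly Jensen's inequality $\Exp [(X_n^\gamma)^2] \geq (\Exp X_n^\gamma)^2$ applied to each summand. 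The main subtle point is this final index bookkeeping, which relies on the tensor-product form of $\rho_m$ and the assumption that each $b^{\gamma\alpha}_{n,i}$ depends only on a single scalar $y^\alpha$.
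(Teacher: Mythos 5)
Your proof is correct and takes essentially the same approach as the paper. The paper phrases the key step as Jensen's inequality for the convex function $|\cdot|^2$ applied to the convolution against the probability kernel $\rho_m$, exactly your $\Exp[(X_n^\gamma)^2]\geq(\Exp X_n^\gamma)^2$ in probabilistic notation, and both arguments rely in the same way on the tensor-product structure of $\rho_m$ and on $b^{\gamma\alpha}_{n,i}$ depending only on the single scalar $y^\alpha$.
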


	\begin{proof}
		Fix $m$. By definition of the convolution, symmetry of ${}_m a^{\alpha\beta}_{ij}$ is immediate. The same holds for uniform boundedness, since the convolution kernel has normalized $\L^1$-norm.

		For better readability, suppress dependence on $t$ and $x$ of the coefficients. Introduce the vector notation $\vec{b}_{n,j}^\alpha(y) \coloneqq \bigl(b_{n,j}^{\alpha\beta}(y^\beta)\bigr)_{\beta=1}^N$, likewise for ${}_m \vec{b}_{n,j}^\alpha(y)$. In this way, $\sum_\beta b^{\alpha\beta}_{n,j}(y^\beta) \xi_j^\beta = \vec{b}_{n,j}^\alpha(y) \cdot \xi_j$ and $({}_m b_{n,j}^{\alpha\beta})_{\beta=1}^N = \rho_m \ast \vec{b}_{n,j}^\alpha$, where again we only convolve in the last coordinate.

		Observe first that for $y\in \R^N$,
		\begin{align}
			\sum_n \sum_{i,j} \sum_{\alpha,\beta,\gamma} {}_m b^{\gamma\alpha}_{n,i}(y^\alpha) {}_m b^{\gamma\beta}_{n,j}(y^\beta) \xi_i^\alpha \xi_j^\beta = \sum_n \sum_\gamma \Bigl| \sum_j {}_m \vec{b}^{\gamma}_{j}(y) \cdot \xi_j \Bigr|_2^2,
		\end{align}
		where $|\cdot|_2$ is the Euclidean norm on $\R^N$.
		Write $\boldsymbol{z} \colon \R^N \ni z \mapsto z$ for the identity map.
		Using that the convolution kernel is positive with normalized integral, Jensen's inequality gives
		\begin{align}
			\sum_n \sum_\gamma \Bigl| \sum_j {}_m \vec{b}^{\gamma}_{j}(y) \cdot \xi_j \Bigr|_2^2
			&\leq \sum_n \sum_\gamma \Bigl| \sum_j \vec{b}^{\gamma}_{j}(\boldsymbol{z}) \cdot \xi_j \Bigr|_2^2 \ast \rho_m(y) \\
			&= \sum_n \sum_{i,j} \sum_{\alpha,\beta,\gamma} \Bigl( b^{\gamma\alpha}_{n,i}(\boldsymbol{z}^\alpha) b^{\gamma\beta}_{n,j}(\boldsymbol{z}^\beta) \xi_i^\alpha \xi_j^\beta \Bigr) \ast \rho_m(y).
		\end{align}
		Hence, it follows from Assumptions~\ref{ass:ql_lip}~\ref{ass:2ndorder22} and positivity of the convolution kernel that
		\begin{align}
			& \quad \Big({}_m a^{\alpha\beta}_{ij}(y)-\frac12 {}_m b^{\gamma\alpha}_{n,i}(y^\alpha) {}_m b^{\gamma\beta}_{n,j}(y^\beta) \Big) \xi_i^\alpha \xi_j^\beta  \\
			& \geq \int_{\R^N} \Big(a^{\alpha\beta}_{ij}(z)-\frac12 b^{\gamma\alpha}_{n,i}(z^\alpha) b^{\gamma\beta}_{n,j}(z^\beta) \Big) \xi_i^\alpha \xi_j^\beta \rho_m(y-z) \d z
			\geq \lambda | \xi |^2. \qedhere
		\end{align}
	\end{proof}

Using \cite{AVvar} we show that for each $m$, problem~\eqref{eq:ql_m} admits a unique solution.

	\begin{lemma}
		\label{lem:ql-reg}
		Suppose Assumption~\ref{ass:ql_lip} holds. Then, for each $m$, there is a unique strong solution $u_m \in \L^2(\Omega \times (0,T); \H^1_0(\Dom)) \cap \L^2(\Omega; \Cont([0,T]; \L^2(\Dom)))$ to~\eqref{eq:ql_m}.
	\end{lemma}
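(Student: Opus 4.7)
The plan is to recast \eqref{eq:ql_m} in the Gelfand triple $V=\H^1_0(\Dom)^N\hookrightarrow H=\L^2(\Dom)^N\hookrightarrow V^*$ and then invoke the variational well-posedness result from \cite{AVvar}. Since $m$ is now fixed, the mollification replaces the rough quasilinear dependence by a Lipschitz dependence of the coefficients on the solution in the $\L^2$-norm, so classical variational theory should apply once the required structural hypotheses on the pair $({}_m A, {}_m B)$ are verified.

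First I would check boundedness and coercivity of $({}_m A(t,v), {}_m B(t,v))$ uniformly in $v\in H$. Boundedness is immediate because convolution with a non-negative kernel of unit mass preserves the uniform bound $\Lambda$ on the coefficients, so $\|{}_m A(t,v)\|_{\cL(V,V^*)}+\|{}_m B(t,v)\|_{\cL(V,\HS(\ell^2,H))}\le C\Lambda$. Coercivity with the original constant $\lambda$ is exactly Lemma~\ref{lem:smooth_elliptic}. The crucial step is then to show Lipschitz continuity of $v\mapsto {}_m A(t,v)$ and $v\mapsto {}_m B(t,v)$ from $H$ into the respective operator spaces. Because $\rho_m\in\Cont_0^\infty(\R^N)$ and $\zeta_m\in\Cont_0^\infty(\R)$, one has $\|\partial_y\,{}_m a\|_\infty+\|\partial_y\,{}_m b\|_\infty\le L_m<\infty$, and by Young's inequality $R_m\colon H\to \L^\infty(\Dom)^N$ is bounded with constant $C_m=\|\rho_m\|_{\L^2}$. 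Combining these gives
\[
\|{}_m a(t,\cdot,R_m v)-{}_m a(t,\cdot,R_m w)\|_{\L^\infty(\Dom)}\le L_mC_m\|v-w\|_H,
\]
and analogously for ${}_m b$. Multiplied against a gradient of a test function in $V$, this yields the desired Lipschitz continuity of the coefficient maps, with an $m$-dependent constant (harmless since $m$ is fixed). The Nemytskii maps associated with $\Phi$, $\phi$ and $g$ are Lipschitz of linear growth from $H$ into $V^*$, $V^*$ and $\HS(\ell^2,H)$ respectively by Assumption~\ref{ass:ql_lip}\ref{ass:2ndorder23}. Assembling these ingredients, \eqref{eq:ql_m} fits the hypotheses of the variational well-posedness theorem in \cite{AVvar}, which produces the desired unique strong solution $u_m\in \L^2(\Omega\times(0,T);\H^1_0(\Dom))\cap \L^2(\Omega;\Cont([0,T];\L^2(\Dom)))$.

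The main obstacle I foresee is a matching/bookkeeping issue: making sure the exact form of the hypotheses used by the variational theorem in \cite{AVvar} (in particular whether the Lipschitz/monotonicity condition is imposed on the full pair $({}_m A,{}_m B)$, and how the linear-in-$u$ piece $\lb {}_m B(u)u,e_n\rb$ is combined with the Nemytskii noise $g(u)$ into a single Lipschitz map into $\HS(\ell^2,H)$) is met by the verifications above. If \cite{AVvar} only directly covers linear $A,B$, a safe fallback is a contraction argument: freeze $\bar u$, use Proposition~\ref{prop:lions} to solve the resulting linear equation with frozen coefficients, and show via the $\L^2$ a priori estimate applied to the difference that the resulting solution map is a contraction on $\L^2(\Omega;\Cont([0,T_0];H))\cap \L^2(\Omega\times(0,T_0);V)$ for small $T_0$ (the contraction constant involves $L_mC_m$ and the short-time factor from the estimate), and then iterate in time to reach $T$.
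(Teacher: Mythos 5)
Your proposal takes essentially the same route as the paper: recast \eqref{eq:ql_m} in the Gelfand triple, verify boundedness and coercivity (via Lemma~\ref{lem:smooth_elliptic}), check Lipschitz dependence of the coefficient maps on $u$ in $H=\L^2$ (uniformly in time with an $m$-dependent constant), and then invoke the variational well-posedness result from \cite{AVvar}; your fallback is never needed. The only substantive difference is in how you prove Lipschitz continuity of $v\mapsto{}_m A(t,v)$: you use Young's inequality $\L^2\ast\L^2\to\L^\infty$ directly, whereas the paper passes through a Sobolev embedding $\W^{k,2}\hookrightarrow\L^\infty$ with $k>d/2$ and Young's inequality $\W^{k,1}\ast\L^2\to\W^{k,2}$; both work, and yours is slightly more direct. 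One small point to add: \cite[Thm.~3.5]{AVvar} requires the strengthened coercivity $\lb A_0(v)u,u\rb-(\nicefrac12+\eta)\|B_0(v)u\|^2\geq\theta\|u\|_V^2-M\|u\|_H^2$ with $\eta>0$, whereas Lemma~\ref{lem:smooth_elliptic} only gives $\eta=0$; this is resolved as in the paper by using boundedness of $B_0$ to absorb the extra $\eta\|B_0(v)u\|^2\lesssim\eta\|u\|_V^2$ into the coercivity constant.
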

	\begin{proof}
		Fix $m$. Write $\|\cdot\|_{\HS} = \|\cdot\|_{\HS(\ell^2, \L^2(\Dom))}$. The assertion follows from~\cite[Thm.~3.5]{AVvar} applied with $A_0(v)u \coloneqq A_0(t,v)u \coloneqq {}_m A(t,v)u$ and $B_0(v)u \coloneqq B_0(t,v)u \coloneqq {}_m B(t,v)u$, the non-linearities $F(v) \coloneqq F(t,v) \coloneqq \partial_i \Phi_i(t,v) + \phi(t,v)$ and $G(v)e_n \coloneqq G(t,v)e_n \coloneqq g_{n}(t,v)$, and the cylindrical Brownian motion $We_n \coloneqq w_n$.

		Indeed, note first that, in the light of Lemma~\ref{lem:smooth_elliptic}, all assertions from Assumption~\ref{ass:ql_lip} remain valid for the problem~\eqref{eq:ql_m}. Besides boundedness of $A_0$ and $B_0$, and linear growth of $F$ and $G$ in the respective norms, which readily follow from uniform boundedness of the coefficients in the first two cases, and linear growth of the non-linearities in the last two cases, we have to check the regularity conditions
		\begin{align}
			\label{eq:AVvar_reg1}
			\| A_0(u)w - A_0(v)w \|_{\H^{-1}(\Dom)} + \| B_0(u)w - B_0(v)w \|_{\HS} \lesssim \| u - v \|_{\L^2(\Dom)} \| w \|_{\H^1_0(\Dom)},
		\end{align}
		for all $u,v\in \L^2(D)$ and $w \in \H^1_0(\Dom)$, and
		\begin{align}
			\label{eq:AVvar_reg2}
			\| F(u) - F(v) \|_{\H^{-1}(\Dom)} + \| G(u) - G(v) \|_{\HS} \lesssim \| u - v \|_{\L^2(\Dom)},
		\end{align}
		for all $u,v\in \L^2(D)$,
		as well as the following coercivity condition: there exists $\theta, \eta > 0$ and $M \geq 0$ such that a.s.
		\begin{align}
			\label{eq:AVvar_coerc}
			\lb A_0(v)u, u \rb - (\nicefrac{1}{2} + \eta) \| B_0(v)u \|_{\HS}^2 \geq \theta \| u \|_{\H^1_0(\Dom)}^2 - M \| u \|_{\L^2(\Dom)}^2,
		\end{align}
		for all $u\in \H^1_0(\Dom)$ and $v\in \L^2(\Dom)$.

		The second regularity condition,~\eqref{eq:AVvar_reg2}, follows immediately from Lischitz continuity of $\Phi$, $\phi$, and $g$. For brevity, we present the regularity condition~\eqref{eq:AVvar_reg1} only for $A_0$. The calculation for $B_0$ is similar. Fix an integer $k>\nicefrac{d}{2}$. Using the Sobolev embedding and Young's convolution inequality, calculate
		\begin{align}
			\| A_0(u)w - A_0(v)w \|_{\H^{-1}(\Dom)} &\leq \|{}_m a^{\alpha\beta}_{ij}(R_m u) - {}_m a^{\alpha\beta}_{ij}(R_m v) \|_{\infty} \| \partial_j w^\beta \|_{\L^2(\Dom)}
			\\ &\lesssim \| {}_m a^{\alpha\beta}_{ij} \|_{\Lip} \| R_m u - R_m v \|_{\infty} \| \partial_j w^\beta \|_{\L^2(\Dom)}
			\\ &\lesssim_m \| a^{\alpha\beta}_{ij} \|_{\infty} \| R_m u - R_m v \|_{\W^{k,2}(\Dom)} \| \partial_j w^\beta \|_{\L^2(\Dom)}
			\\ &\lesssim \| \rho_m \|_{\W^{k,1}(\Dom)} \| u - v \|_{\L^2(\Dom)} \| w \|_{\H^1_0(\Dom)}
			\\ &\lesssim_m \| u - v \|_{\L^2(\Dom)} \| w \|_{\H^1_0(\Dom)}.
		\end{align}

		To complete the proof, note first that Assumption~\ref{ass:ql_lip}~\ref{ass:2ndorder22} implies~\eqref{eq:AVvar_coerc} with $\eta = 0$ and $\theta \coloneqq \lambda \eqqcolon M$. Therefore, since $B_0(v) \colon \H^1_0(\Dom)^N \to \L^2(\Dom)^N$ is bounded (uniformly in $v$), we get~\eqref{eq:AVvar_coerc} with $\eta > 0$ if we replace $\theta$ by $\nicefrac{\theta}{2}$.
	\end{proof}

	\subsection{Higher integrability of semilinear equations}
	\label{subsec:semi}

	It will turn out useful in the sequel to consider a semi-linear version of~\eqref{eq:ql}. More precisely, consider
	\begin{align}
		\label{eq:sl}
		\tag{SL}
		\left\{\quad
		\begin{aligned}
			\d u^\alpha  &= \big[ \partial_i (\boldsymbol{a}^{\alpha\beta}_{ij} \partial_j u^\beta) + \partial_i \Phi^\alpha_i(u) +\phi^\alpha(u) \big] \d t
			\\  & \qquad \qquad
			+ \sum_{n\geq 1}  \big[ \boldsymbol{b}^{\alpha\beta}_{n,j} \partial_j u^\beta + g^\alpha_{n}(u) \big] \d w_n,\ \ &\text{ on }\Dom,\\
			u^\alpha &= 0, \ \ &\text{ on }\partial\Dom,
			\\
			u^\alpha(0)&=u^\alpha_{0},\ \ &\text{ on }\Dom.
		\end{aligned}\right.
	\end{align}
	We impose the following assumption, which is similar to Assumption~\ref{ass:ql_lip}.
	\begin{ass}\label{ass:secondorder-semi}
		\begin{enumerate}[{\rm(1)}]
			\item\label{ass:2ndorder31} The coefficients $\boldsymbol{a}^{\alpha\beta}_{ij}\colon \Omega \times (0,T) \times \Dom \to \R$ as well as $\boldsymbol{b}^{\alpha\beta}_j \coloneqq (\boldsymbol{b}^{\alpha\beta}_{n,j})_{n\geq 1} \colon \Omega \times (0,T) \times \Dom \to \ell^2$ are progressively measurable and uniformly bounded by a constant $\Lambda$.
			\item\label{ass:2ndorder32}
			One has $\boldsymbol{a}^{\alpha\beta}_{ij} = \boldsymbol{a}^{\beta\alpha}_{ji}$ and there exists $\lambda>0$ such that, for all $t\in (0,T), x\in \Dom$, one has almost surely
			$$
			\Big(\boldsymbol{a}^{\alpha\beta}_{ij}(t,x)-\frac12 \boldsymbol{b}^{\gamma\alpha}_{n,i}(t,x) \boldsymbol{b}^{\gamma\beta}_{n,j}(t,x) \Big)
			\xi_i^\alpha \xi_j^\beta
			\geq  \lambda |\xi|^2 \quad \text{ for all }\xi\in \R^{dN}.
			$$
			\item\label{ass:2ndorder33} The deterministic non-linearities $\Phi^\alpha_i \colon (0,T) \times \Dom \times \R^N \to \R$ and $\phi^\alpha \colon (0,T) \times \Dom \times \R^N \to \R$ and the stochastic non-linearity $g^\alpha \coloneqq (g^\alpha_n)_{n \geq 1} \colon (0,T) \times \Dom \times \R^N \to \ell^2$ are measurable and Lipschitz continuous and of linear growth in the last variable (uniformly with respect to the first two components).
		\end{enumerate}
	\end{ass}
	A prototypical example for Assumption~\ref{ass:secondorder-semi} is $\boldsymbol{a}^{\alpha\beta}_{ij}(t,x) \coloneqq a^{\alpha\beta}_{ij}(t,x,u(t,x))$ and $\boldsymbol{b}^{\alpha\beta}_{n,j}(t,x) \coloneqq b^{\alpha\beta}_{n,j}(t,x,u^\beta(t,x))$, where $a^{\alpha\beta}_{ij}$ and $b^{\alpha\beta}_{n,j}$ are coefficients for~\eqref{eq:ql} subject to Assumption~\ref{ass:ql_lip} and the progressively measurable function $u \colon \Omega \times (0,T) \times \Dom \to \R^N$ is frozen.

	It is well-known that~\eqref{eq:sl} admits a unique variational solution in $\L^2(\Omega \times (0,T); \H^1_0(\Dom)) \cap \L^2(\Omega; \Cont([0,T]; \L^2(\Dom)))$. As a consequence of Theorem~\ref{thm:main1}, the main result of the first part of this article, we can improve integrability in time of this variational solution to $p>2$.

	\begin{proposition}[Higher integrability for~\eqref{eq:sl}]
		\label{prop:sl_higher_int}
		Suppose Assumption~\ref{ass:secondorder-semi} holds. Then there is some $p_0 > 2$ such that for any $p\in (2,p_0]$ and any $u_0 \in \L^p_{\cF_0}(\Omega; \B^{1-\nicefrac{2}{p}}_{2,p,0}(\Dom))$ there is a unique solution $u \in \L^p(\Omega \times (0,T); \H^1_0(\Dom)) \cap \L^p(\Omega; \Cont([0,T]; \B^{1-\nicefrac{2}{p}}_{2,p,0}(\Dom)))$ to~\eqref{eq:sl} satisfying the estimate
		\begin{align}
			\label{eq:sl_bound1}
			\| u \|_{\L^p(\Omega; \Cont([0,T]; \B^{1-\nicefrac{2}{p}}_{2,p,0}(\Dom)))} \lesssim 1 + \| u_0 \|_{\L^p(\Omega; \B^{1-\nicefrac{2}{p}}_{2,p,0}(\Dom))},
		\end{align}
		and for all $\theta \in [0,\nicefrac{1}{2})$ the estimate
		\begin{align}
			\label{eq:sl_bound2}
			\| u \|_{\L^p(\Omega; \H^{\theta,p}(0,T; \H^{1-2\theta}_0(\Dom)))} \lesssim_\theta 1 + \| u_0 \|_{\L^p(\Omega; \B^{1-\nicefrac{2}{p}}_{2,p,0}(\Dom))}.
		\end{align}
		If $\theta \in (\nicefrac{1}{p}, \nicefrac{1}{2})$, then one has moreover the estimate
		\begin{align}
			\label{eq:sl_bound3}
			\| u \|_{\L^p(\Omega; \Cont^{\theta-\nicefrac{1}{p}}([0,T]; \H^{1-2\theta}_0(\Dom)))} \lesssim_\theta 1 + \| u_0 \|_{\L^p(\Omega; \B^{1-\nicefrac{2}{p}}_{2,p,0}(\Dom))}.
		\end{align}
	\end{proposition}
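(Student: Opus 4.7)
The plan is to first obtain a solution at the standard $\L^2$-in-time regularity but with $\L^p(\Omega)$-moments, and then to bootstrap to the sharper $\L^p$-in-time bounds by regarding the semi-linear terms as data in the linear equation and invoking Theorem~\ref{thm:main1}.

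First I would cast \eqref{eq:sl} as a stochastic evolution equation on the Gelfand triple $(V, H, V^*) = (\H^1_0(\Dom)^N, \L^2(\Dom)^N, \H^{-1}(\Dom)^N)$ (complexified as in the remark after Theorem~\ref{thm:main1}) with cylindrical Brownian motion on $U = \ell^2$, setting $\lb Au, v \rb \coloneqq \int_\Dom \boldsymbol{a}^{\alpha\beta}_{ij} \partial_j u^\beta \partial_i v^\alpha \d x$ and $\lb (Bu) e_n, w \rb \coloneqq \int_\Dom \boldsymbol{b}^{\alpha\beta}_{n,j} \partial_j u^\beta w^\alpha \d x$. The uniform bound in Assumption~\ref{ass:secondorder-semi}(1) supplies the boundedness in Assumption~\ref{ass:SP}; the coercivity in Assumption~\ref{ass:secondorder-semi}(2), combined with Poincaré on $\Dom$, yields coercivity with $M=0$; and the symmetry $\boldsymbol{a}^{\alpha\beta}_{ij} = \boldsymbol{a}^{\beta\alpha}_{ji}$ is exactly Assumption~\ref{ass:symmetric}. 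Hence Theorem~\ref{thm:main1} applies to the linear pair $(A,B)$.

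Next I would invoke the variational $\L^2$-theory for Lipschitz semi-linear perturbations of linear growth, e.g.\ \cite[Thm.~3.5]{AVvar} as used in Lemma~\ref{lem:ql-reg}, to obtain a unique progressively measurable solution $u \in \L^2(\Omega \times (0,T); V) \cap \L^2(\Omega; \Cont([0,T]; H))$ of \eqref{eq:sl}. Using $u_0 \in \L^p(\Omega; \B^{1-\nicefrac{2}{p}}_{2,p,0}(\Dom)) \hookrightarrow \L^p(\Omega; H)$, a standard Itô/BDG/Gronwall argument applied to $\|u\|_H^p$ upgrades the pathwise moment to $u \in \L^p(\Omega; \Cont([0,T]; H))$ with bound $\lesssim 1 + \|u_0\|_{\L^p(\Omega;H)}$. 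Then I would view $u$ as the variational solution of the \emph{linear} equation
\begin{align*}
\d u + A u \, \d t = F \, \d t + (B u + G) \, \d W, \qquad u(0) = u_0,
\end{align*}
with $F \coloneqq \partial_i \Phi_i(u) + \phi(u)$ and $G \coloneqq g(u)$. Lipschitz continuity and linear growth of $\Phi, \phi, g$ yield
\begin{align*}
\|F\|_{\L^p(\Omega \times (0,T); V^*)} + \|G\|_{\L^p(\Omega \times (0,T); \HS)} \lesssim 1 + \|u\|_{\L^p(\Omega \times (0,T); H)},
\end{align*}
which is finite by the previous step. Theorem~\ref{thm:main1} then produces a unique $\L^p$-solution of this linear equation; since any such solution is a fortiori an $\L^2$-solution and $\L^2$-uniqueness holds in the variational theory, it coincides with $u$. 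Reading off the three estimates from Theorem~\ref{thm:main1} and substituting the pathwise $\L^p(\Omega)$-bound for $\|u\|_{\L^p(\Omega \times (0,T);H)}$ in terms of $\|u_0\|_{\L^p(\Omega; \B^{1-\nicefrac{2}{p}}_{2,p,0}(\Dom))}$ produces \eqref{eq:sl_bound1}--\eqref{eq:sl_bound3}.

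The hard part is the $\L^p(\Omega)$-moment upgrade in the second step: applying Itô to $\|u\|_H^p$, the cross term $\lb F(u), u \rb$ must be absorbed as $\leq \tfrac{\lambda}{2}\|u\|_V^2 + C(1 + \|u\|_H)^2$ into the coercivity of $A$, the stochastic integral controlled by BDG, and the resulting scalar inequality closed with Gronwall. This is classical but the essential technical input; once it is in place, Theorem~\ref{thm:main1} delivers the sharper time regularity estimates for free.
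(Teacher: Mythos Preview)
Your proposal is correct and follows essentially the same two-step strategy as the paper: obtain $\L^p(\Omega)$-moments for the variational solution, then freeze the semi-linearities as data $F(u), G(u)$ and apply Theorem~\ref{thm:main1} to the resulting linear problem, identifying the outcome with $u$ by $\L^2$-uniqueness. The only difference is that the paper delegates the $\L^p(\Omega)$-moment step to \cite[Thm.~2.4]{GHV} (which already yields $u\in \L^p(\Omega;\L^2(0,T;\H^1_0))\cap \L^p(\Omega;\Cont([0,T];\L^2))$ directly), whereas you propose to first invoke \cite[Thm.~3.5]{AVvar} for the $\L^2$-solution and then hand-roll the It\^o/BDG/Gronwall upgrade; this is a presentational rather than mathematical distinction.
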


	\begin{proof}
		\textbf{Step~1}: Existence of variational solution and bootstrapping integrability.

		Consider the progressively measurable maps
		\begin{align}
			A_0(t,u) &\coloneqq (\partial_i (\boldsymbol{a}^{\alpha\beta}_{ij}(t, x) \partial_j u^\beta))_{\alpha=1}^N, \\
			B_0(t,u) e_n &\coloneqq (\boldsymbol{b}^{\alpha\beta}_{n,j}(t, x) \partial_j u^\beta)_{\alpha=1}^N, \\
			F(u) &\coloneqq (\partial_i \Phi^\alpha_i(u) +\phi^\alpha(u))_{\alpha=1}^N,\\
			G(u) e_n &\coloneqq (g^\alpha_{n}(u))_{\alpha=1}^N.
		\end{align}
		The well-posedness result~\cite[Thm.~2.4]{GHV} applied with $A(t,v) \coloneqq A_0(t,v) + F(v)$ and $B(t,v)e_n \coloneqq B_0(t,v) e_n + G(u) e_n$ yields a unique strong solution $$u \in \L^p(\Omega; \L^2(0,T; \H^1_0(\Dom))) \cap \L^p(\Omega; \Cont([0,T]; \L^2(\Dom)))$$ together with the estimate
		\begin{align}
			\label{eq:sl-var-bound}
			\| u \|_{\L^p(\Omega; \L^2(0,T; \H^1_0(\Dom)))} + \| u \|_{\L^p(\Omega; \Cont([0,T]; \L^2(\Dom)))} \lesssim 1 + \| u_0 \|_{\L^p(\Omega; \L^2(\Dom))}
		\end{align}
		for some $p > 2$ depending only on the quantities from Assumption~\ref{ass:secondorder-semi}, provided we can verify the hypotheses listed in~\cite[Ass.~2.1]{GHV}. Using boundedness of the coefficients and linear growth of the non-linearities, (H4) and (H5) with $f \equiv 1$ follow readily. As in the proof of Lemma~\ref{lem:ql-reg} one deduces (H3) with $p > 2$ sufficiently close to $2$ from Assumption~\ref{ass:secondorder-semi}~\ref{ass:2ndorder32} and boundedness of $B_0$. Condition (H2) is void for $A_0(t,u)$ and $B_0(t,u)$ in virtue of (H3) and linearity, and is fulfilled for $F(u)$ and $G(u)$ by their linear growth. Likewise, (H1) is immediate from linearity and Lipschitz continuity.

		\textbf{Step 2}: bootstrapping time-regularity.

		A particular consequence of~\eqref{eq:sl-var-bound} is $F(u) \in \L^p(\Omega \times (0,T); \H^{-1}(\Dom))$ and $G(u) \in \L^p(\Omega \times (0,T); \HS(\ell^2,\L^2(\Dom)))$, with the estimate
		\begin{align}
			\label{eq:bound_semilinearities}
			\| F(u) \|_{\L^p(\Omega \times (0,T); \H^{-1}(\Dom))} + \| G(u) \|_{\L^p(\Omega \times (0,T); \HS(\ell^2,\L^2(\Dom)))} \lesssim 1 + \| u_0 \|_{\L^p(\Omega; \L^2(\Dom))}.
		\end{align}
		Consequently, if we freeze $u$ in the semi-linearities of~\eqref{eq:sl}, that is to say, if we consider~\eqref{eq:sl} as a linear problem with right-hand sides $f = F(u)$ and $g = G(u)$, then Theorem~\ref{thm:main1} becomes applicable (of course we can restrict $p$ from Step~1 further so that the smallness condition in the theorem is verified) and yields a unique solution $v \in \L^p(\Omega \times (0,T); \H^1_0(\Dom)) \cap \L^p(\Omega; \Cont([0,T]; \B^{1-\nicefrac{2}{p}}_{2,p,0}(\Dom)))$.
		For the first estimate in the theorem calculate
		\begin{align}
			&\| v \|_{\L^p(\Omega; \Cont([0,T]; \B^{1-\nicefrac{2}{p}}_{2,p,0}(\Dom)))} \\
			\lesssim{} &\| u_0 \|_{\L^p(\Omega; \B^{1-\nicefrac{2}{p}}_{2,p,0}(\Dom))} + \| F(u) \|_{\L^p(\Omega \times (0,T), \H^{-1}(\Dom))} + \| G(u) \|_{\L^p(\Omega \times (0,T), \HS(\ell^2, \L^2(\Dom)))} \\
			\lesssim{} &1 + \| u_0 \|_{\L^p(\Omega; \B^{1-\nicefrac{2}{p}}_{2,p,0}(\Dom))},
		\end{align}
		where we used~\eqref{eq:bound_semilinearities} and the embedding $\B^{1-\nicefrac{2}{p}}_{2,p,0}(\Dom) \subseteq \L^2(\Dom)$ in the last step.
		The other estimates in the theorem follow also from Theorem~\ref{thm:main1} using the same argument.
		By uniqueness with $p=2$, $u = v$, which completes the proof.
	\end{proof}

	\subsection{Uniform bounds with $p>2$}
	\label{subsec:unif_bounds1}

	Recall the family of solutions $$(u_m)_m \subseteq \L^2(\Omega; \Cont([0,T]; \L^2(\Dom))) \cap \L^2(\Omega \times (0,T); \H^1_0(\Dom))$$ to the approximate problems~\eqref{eq:ql_m} from Lemma~\ref{lem:ql-reg}. Using the result from Section~\ref{subsec:semi}, we derive uniform $\L^p$-bounds for some $p>2$.

	\begin{proposition}
		\label{prop:unif_bounds1}
		Suppose Assumption~\ref{ass:ql_lip} holds. Then there is some $p_0 > 2$ such that for all $m$, for all $p\in (2,p_0]$, and any $u_0 \in \L^p_{\cF_0}(\Omega; \B^{1-\nicefrac{2}{p}}_{2,p,0}(\Dom))$ there is a unique solution $$u_m \in \L^p(\Omega \times (0,T); \H^1_0(\Dom)) \cap \L^p(\Omega; \Cont([0,T]; \B^{1-\nicefrac{2}{p}}_{2,p,0}(\Dom)))$$ to~\eqref{eq:ql_m} satisfying the estimate
		\begin{align}
			\label{eq:qlm_bound1}
			\| u_m \|_{\L^p(\Omega; \Cont([0,T]; \B^{1-\nicefrac{2}{p}}_{2,p,0}(\Dom)))} \lesssim 1 + \| u_0 \|_{\L^p(\Omega; \B^{1-\nicefrac{2}{p}}_{2,p,0}(\Dom))},
		\end{align}
		and for all $\theta \in [0,\nicefrac{1}{2})$ the estimate
		\begin{align}
			\label{eq:qlm_bound2}
			\| u_m \|_{\L^p(\Omega; \H^{\theta,p}(0,T; \H^{1-2\theta}_0(\Dom)))} \lesssim_\theta 1 + \| u_0 \|_{\L^p(\Omega; \B^{1-\nicefrac{2}{p}}_{2,p,0}(\Dom))}.
		\end{align}
		If $\theta \in (\nicefrac{1}{p}, \nicefrac{1}{2})$, then one has moreover the estimate
		\begin{align}
			\label{eq:qlm_bound3}
			\| u_m \|_{\L^p(\Omega; \Cont^{\theta-\nicefrac{1}{p}}([0,T]; \H^{1-2\theta}_0(\Dom)))} \lesssim_\theta 1 + \| u_0 \|_{\L^p(\Omega; \B^{1-\nicefrac{2}{p}}_{2,p,0}(\Dom))}.
		\end{align}
	\end{proposition}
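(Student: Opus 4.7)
The plan is to reduce the quasilinear problem \eqref{eq:ql_m}, once a solution $u_m$ is available, to a semilinear problem of the form \eqref{eq:sl} by freezing $u_m$ inside the coefficients, and then invoke Proposition~\ref{prop:sl_higher_int}. Concretely, I would start from the variational solution $u_m \in \L^2(\Omega; \Cont([0,T]; \L^2(\Dom))) \cap \L^2(\Omega \times (0,T); \H^1_0(\Dom))$ obtained in Lemma~\ref{lem:ql-reg} and define the frozen coefficients
\[\boldsymbol{a}^{\alpha\beta}_{ij}(t,x) \coloneqq {}_m a^{\alpha\beta}_{ij}(t,x, R_m u_m(t,x)), \qquad \boldsymbol{b}^{\alpha\beta}_{n,j}(t,x) \coloneqq {}_m b^{\alpha\beta}_{n,j}(t,x, R_m u_m^\beta(t,x)).\]
Since $u_m$ is progressively measurable and the smoothed kernels are bounded and continuous in the last slot, these coefficients are progressively measurable and bounded by $\Lambda$. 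Then $u_m$ is a strong solution of~\eqref{eq:sl} with data $(\boldsymbol{a}, \boldsymbol{b}, \Phi, \phi, g, u_0)$.

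The next step is to check that Assumption~\ref{ass:secondorder-semi} holds for $(\boldsymbol{a}, \boldsymbol{b})$ with constants \emph{independent of $m$}. Items \ref{ass:2ndorder31} and \ref{ass:2ndorder32} follow directly from Lemma~\ref{lem:smooth_elliptic}, which is the whole point of that lemma: the mollification preserves the symmetry, the bound $\Lambda$ and the coercivity constant $\lambda$. The Lipschitz/linear growth of $\Phi$, $\phi$, $g$ in item \ref{ass:2ndorder33} is inherited from Assumption~\ref{ass:ql_lip}~\ref{ass:2ndorder23} and does not involve the mollification at all.

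Now I would apply Proposition~\ref{prop:sl_higher_int} to this semilinear problem. That proposition furnishes a $p_0 > 2$ and a unique solution $v \in \L^p(\Omega \times (0,T); \H^1_0(\Dom)) \cap \L^p(\Omega; \Cont([0,T]; \B^{1-\nicefrac{2}{p}}_{2,p,0}(\Dom)))$ satisfying exactly the three estimates \eqref{eq:sl_bound1}--\eqref{eq:sl_bound3}. Since both $v$ and $u_m$ solve the same semilinear equation and $v$ lies in the $\L^2$-variational class, uniqueness from Lemma~\ref{lem:ql-reg} gives $v = u_m$. Transferring the estimates yields \eqref{eq:qlm_bound1}--\eqref{eq:qlm_bound3} for $u_m$.

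The only point demanding care is ensuring that the resulting $p_0$ and the implicit constants are truly uniform in $m$. Inspecting Proposition~\ref{prop:sl_higher_int} and tracing back to Theorem~\ref{thm:main1} and~\cite[Thm.~2.4]{GHV}, one sees that $p_0$ depends only on $\Lambda$, $\lambda$, $T\vee 1$ and dimensions, all of which are preserved under the mollification by Lemma~\ref{lem:smooth_elliptic}; the multiplicative constants in \eqref{eq:sl_bound1}--\eqref{eq:sl_bound3} similarly depend only on those quantities and on the Lipschitz/linear-growth constants of $\Phi, \phi, g$. Hence the $m$-dependence drops out, which is exactly the uniform bound needed downstream for the stochastic compactness argument. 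I do not expect a genuine obstacle here: the construction in Subsection~\ref{subsec:smoothing} was arranged precisely so that all structural bounds used by Proposition~\ref{prop:sl_higher_int} are $m$-independent.
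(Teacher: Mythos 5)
Your proof is correct and follows essentially the same route as the paper: freeze the coefficients along the known $\L^2$-solution $u_m$, invoke Proposition~\ref{prop:sl_higher_int} for the resulting semilinear problem, and identify the $\L^p$-solution $v$ with $u_m$ via $\L^2$-uniqueness from Lemma~\ref{lem:ql-reg}. If anything you are slightly more careful than the paper's phrasing: you freeze at $R_m u_m$ (matching the actual definition of ${}_m A$ and ${}_m B$ in Subsection~\ref{subsec:smoothing}), and you spell out why $p_0$ and the implicit constants are $m$-independent, which is indeed the point of Lemma~\ref{lem:smooth_elliptic}.
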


	\begin{proof}
		Fix some $m$. As already mentioned in Section~\ref{subsec:semi}, we can consider the coefficients $\boldsymbol{a}^{\alpha\beta}_{ij}(t,x) \coloneqq {}_m a^{\alpha\beta}_{ij}(t,x,u_m(t,x))$ and $\boldsymbol{b}^{\alpha\beta}_{n,j}(t,x) \coloneqq {}_m b^{\alpha\beta}_{n,j}(t,x,u_m^\beta(t,x))$, which then satisfy Assumption~\ref{ass:secondorder-semi}. Now by Proposition~\ref{prop:sl_higher_int},~\eqref{eq:sl} with these coefficients admits a unique solution $v$ that satisfies the bounds claimed in the proposition. But since
		\begin{align}
			v \in{} &\L^p(\Omega \times (0,T); \H^1_0(\Dom)) \cap \L^p(\Omega; \Cont([0,T]; \B^{1-\nicefrac{2}{p}}_{2,p,0}(\Dom))) \\
			\subseteq{} &\L^2(\Omega \times (0,T); \H^1_0(\Dom)) \cap \L^2(\Omega; \Cont([0,T]; \L^2(\Dom))),
		\end{align}
		it follows $v = u_m$ by uniqueness for $p=2$ stated in Lemma~\ref{lem:ql-reg}, which completes the proof.
	\end{proof}

	\subsection{Stochastic compactness argument}
	\label{subsec:comp_arg}

In Theorem \ref{thm:varcomp} we have already seen that a general tightness result can be deduced from a priori estimates such as the ones of Proposition \ref{prop:unif_bounds1}. In the application to the quasi-linear system we need a slight variation of the tightness result in which a certain weak compactness is also taken into account.

In this section we suppose Assumption~\ref{ass:ql_lip} holds, and we fix $p_0$, $p \in (2, p_0]$, and $(u_m)_{m\geq 1}$ as in Proposition~\ref{prop:sl_higher_int}.
Put
	\begin{align}
		\X_u \coloneqq \Cont([0,T]; \L^2(\Dom)) \cap \L^p_\weak(0,T; \H^1_0(\Dom)), \quad \X_W \coloneqq \Cont([0,T]; \BMSpace_0), \quad \X \coloneqq \X_u \times \X_W.
	\end{align}
	With the subscript \enquote{$\weak$} we indicate that the space $\L^p(0,T; \H^1_0(\Dom))$ is equipped with the weak topology. As a consequence of separability, this has no consequence for questions of measurability.
	The space $\BMSpace_0 \supseteq \BMSpace$ is chosen in such a way that the cylindrical Brownian motion converges almost surely.
	The random vectors $(u_m, W)$ take values in $\X$. Write $\Law_m$ for their (joint) laws.

	We claim that the family $( \Law_m )_m$ is tight. It is sufficient to show tightness of the laws $\Law(u_m)$ on $\X_u$. Let us emphasise that no ``diagonal structure'' of the coefficients is needed.

	\begin{lemma}[Tightness]\label{lem:tightness}
The family of laws $( \Law(u_m) )_m$ on $\X_u$ is tight.
	\end{lemma}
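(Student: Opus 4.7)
The plan is to establish tightness by exhibiting, for each $\varepsilon>0$, a set that is compact in the intersection topology of $\X_u$ and that contains $u_m$ with probability at least $1-\varepsilon$ uniformly in $m$. The a priori bounds of Proposition~\ref{prop:unif_bounds1} provide the key ingredients.

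First, I would handle the strong factor $\Cont([0,T];\L^2(\Dom))$ exactly as in the proof of Theorem~\ref{thm:varcomp}. Fix $\theta\in(\nicefrac{1}{p},\nicefrac{1}{2})$. Since $\Dom$ is bounded, the Rellich--Kondrachov theorem yields the compact embedding $\H^{1-2\theta}_0(\Dom)\hookrightarrow\L^2(\Dom)$, and combining this with equicontinuity in time through the vector-valued Arzelà--Ascoli theorem \cite[Thm.~III.3.1]{Lang} gives the compact embedding $\Cont^{\theta-\nicefrac{1}{p}}([0,T];\H^{1-2\theta}_0(\Dom))\hookrightarrow\Cont([0,T];\L^2(\Dom))$. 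Using~\eqref{eq:qlm_bound3} and Markov's inequality, one obtains $R_\varepsilon>0$ such that the sublevel set $K^{(1)}_\varepsilon=\{u:\|u\|_{\Cont^{\theta-\nicefrac{1}{p}}([0,T];\H^{1-2\theta}_0(\Dom))}\leq R_\varepsilon\}$ is compact in $\Cont([0,T];\L^2(\Dom))$ and satisfies $\P(u_m\in K^{(1)}_\varepsilon)\geq 1-\varepsilon$ for every $m$.

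Next I would handle the weak factor $\L^p_\weak(0,T;\H^1_0(\Dom))$. Since $\H^1_0(\Dom)$ is reflexive and separable and $p\in(1,\infty)$, the space $\L^p(0,T;\H^1_0(\Dom))$ is reflexive with separable dual, so norm-balls are weakly compact (Banach--Alaoglu) and weakly metrizable. Using~\eqref{eq:qlm_bound2} with $\theta=0$ to bound $u_m$ uniformly in $\L^p(\Omega;\L^p(0,T;\H^1_0(\Dom)))$, Markov's inequality again supplies a radius $R'_\varepsilon>0$ such that $K^{(2)}_\varepsilon=\{u:\|u\|_{\L^p(0,T;\H^1_0(\Dom))}\leq R'_\varepsilon\}$ is weakly compact and satisfies $\P(u_m\in K^{(2)}_\varepsilon)\geq 1-\varepsilon$ uniformly in $m$.

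Finally, I would set $K_\varepsilon=K^{(1)}_{\varepsilon/2}\cap K^{(2)}_{\varepsilon/2}$ and verify that this is compact in $\X_u$: given a sequence in $K_\varepsilon$, extract a subsequence converging in $\Cont([0,T];\L^2(\Dom))$ by compactness of $K^{(1)}_{\varepsilon/2}$, then a further subsequence converging weakly in $\L^p(0,T;\H^1_0(\Dom))$ by weak compactness (and metrizability) of $K^{(2)}_{\varepsilon/2}$; the two limits must coincide because both topologies embed continuously into, e.g., $\mathcal{D}'((0,T)\times\Dom)$. A union bound then gives $\P(u_m\in K_\varepsilon)\geq 1-\varepsilon$ for every $m$, yielding tightness. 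The only mildly delicate point is the verification that $K_\varepsilon$ is compact in the intersection topology and the routine use of metrizability of weak-bounded sets; everything else is a direct consequence of the uniform estimates already at our disposal.
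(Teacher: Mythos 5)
Your argument is correct and follows the same route as the paper: uniform bounds from Proposition~\ref{prop:unif_bounds1} plus Chebyshev/Markov, compactness of the Hölder ball in $\Cont([0,T];\L^2(\Dom))$ via the vector-valued Arzelà--Ascoli theorem, and weak compactness of norm-balls in $\L^p(0,T;\H^1_0(\Dom))$. The only cosmetic difference is that the paper works with a single ball in the intersection space $\X_u^{\theta,p}=\Cont^{\theta-\nicefrac{1}{p}}([0,T];\H^{1-2\theta}(\Dom))\cap\L^p(0,T;\H^1_0(\Dom))$ rather than intersecting two separate sublevel sets and applying a union bound.
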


	\begin{proof}
Let $\theta \in (\nicefrac{1}{p}, \nicefrac{1}{2})$. For brevity, put $$\X_u^{\theta,p} \coloneqq \Cont^{\theta-\nicefrac{1}{p}}([0,T]; \H^{1-2\theta}(\Dom)) \cap \L^p(0,T; \H^1_0(\Dom))$$
		and write $\B_R$ for the open ball of radius $R$ in $\X_u^{\theta,p}$.
		First, by Chebyshev's inequality and Proposition~\ref{prop:unif_bounds1},
		\begin{align}
			\label{eq:tight}
			\begin{split}
			\P( \| u_m \|_{\X_u^{\theta,p}} \geq R) &\leq R^{-p} \Exp\| u_m \|_{\X_u^{\theta,p}}^p \\
			&\lesssim R^{-p} \Bigl(1 + \| u_0 \|_{\L^p(\Omega; \B^{1-\nicefrac{2}{p}}_{2,p,0}(\Dom))}^p \Bigr) \to  0 \quad\text{ as } R \to \infty.
			\end{split}
		\end{align}
		Second, we claim that the closure of $\B_R$ is compact in $\X_u$. Indeed, pre-compactness in $\Cont([0,T]; \L^2(\Dom))$ follows from the vector-valued Arz\'ela--Ascoli theorem \cite[Theorem III.3.1]{Lang}, taking into account the compact embedding $\H^1_0(\Dom) \subseteq \L^2(\Dom)$, and compactness in the space $\L^p_\weak(0,T; \H^1_0(\Dom))$ is clear, for $\B_R$ is bounded in the norm topology.
	\end{proof}

	Therefore, Prokhorov's theorem allows to pass to a weakly convergent subsequence (for convenience, we use the same symbol for this subsequence). Second, the Jakubowski--Skorohod theorem~\cite[Thm.~2.7.1]{Hofmanovaetal} gives the following almost sure convergence.

	\begin{lemma}[Skorohod]
		\label{lem:skorohod}
		There exists a probability space $(\tilde \Omega, \tilde \cF, \tilde \P)$, a sequence of $\X$-valued random variables $(\wt{u}_m, \wt{W}_m)$, and a limiting $\X$-valued random variable $(\wt u, \wt W)$, such that
		\begin{enumerate}
			\item for each $m$, the law of $(\wt{u}_m, \wt{W}_m)$ under $\tilde \P$ coincides with the law $\Law_m$, and \label{it:skorohod1}
			\item one has $\tilde \P$-almost surely that $\wt{u}_m \to \wt u$ in $\Cont([0,T]; \L^2(\Dom))$,
			$\nabla \wt{u}_m \to \nabla \wt u$ weakly in $\L^p(0,T; \L^2(\Dom))$, and $\wt{W}_m \to \wt{W}$ in $\Cont([0,T]; \BMSpace_0)$ as $m\to \infty$. \label{it:skorohod2}
		\end{enumerate}
	\end{lemma}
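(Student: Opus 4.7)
The plan is to apply the Jakubowski--Skorohod representation theorem cited as \cite[Thm.~2.7.1]{Hofmanovaetal} to the joint laws $(\Law_m)_m$ of $(u_m, W)$ on $\X = \X_u \times \X_W$. Note that the topology on $\X_u$ contains the weak-$\L^p$ factor, so $\X_u$ is not Polish and the classical Skorohod theorem does not apply directly; the Jakubowski generalisation was designed precisely for this situation.

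First I would verify tightness of $(\Law_m)_m$ on $\X$. Tightness of the $u$-marginals on $\X_u$ is exactly Lemma~\ref{lem:tightness}. The $W$-marginal is the fixed law of the cylindrical Brownian motion on $\Cont([0,T];\BMSpace_0)$, which is Radon by the choice of $\BMSpace_0$ and hence tight. Joint tightness then follows because finite products of compact sets remain compact.

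Next, Prokhorov's theorem produces a weakly convergent subsequence $(\Law_{m_k})_k$ on $\X$; for readability we keep the index $m$. To invoke the Jakubowski--Skorohod theorem I have to exhibit a countable family of continuous real-valued functionals on $\X$ that separate points. On the $\Cont([0,T];\L^2(\Dom))$-component, evaluations at rational times paired against a countable dense subset of $\L^2(\Dom)$ suffice; on the weak $\L^p$-component, pairings against a countable dense subset of $\L^{p'}(0,T;\H^{-1}(\Dom))$ do the job; on $\X_W$ one uses evaluations at rational times paired against a countable separating family of continuous functionals on $\BMSpace_0$. This delivers a probability space $(\tilde\Omega,\tilde\cF,\tilde\P)$ and $\X$-valued random variables $(\wt u_m,\wt W_m)$ and $(\wt u,\wt W)$ satisfying~\ref{it:skorohod1} and converging $\tilde\P$-almost surely in $\X$.

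Finally, I would unpack the topology of $\X_u$ to obtain~\ref{it:skorohod2}. Convergence in $\X_u$ means simultaneous convergence in $\Cont([0,T];\L^2(\Dom))$ and weak convergence in $\L^p(0,T;\H^1_0(\Dom))$; boundedness of $\nabla\colon \H^1_0(\Dom) \to \L^2(\Dom)^d$ then gives weak convergence of $\nabla \wt u_m \to \nabla \wt u$ in $\L^p(0,T; \L^2(\Dom))$, while the $\X_W$-component yields $\wt W_m \to \wt W$ in $\Cont([0,T];\BMSpace_0)$. The main technical point is checking Jakubowski's hypotheses for the non-metrizable space $\X_u$; this is routine in the stochastic compactness literature and essentially follows the blueprint of the cited theorem.
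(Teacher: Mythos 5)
Your proposal is correct and follows essentially the same route as the paper: tightness via Lemma~\ref{lem:tightness} (noting that tightness of the $W$-marginal is automatic), Prokhorov for a weakly convergent subsequence, and the Jakubowski--Skorohod theorem for almost-sure representations. The paper simply cites these results without spelling out the verification of Jakubowski's hypothesis on a countable separating family of continuous functionals, which you supply correctly as a routine addition.
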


	\begin{corollary}
		\label{cor:skorohod}
		The inclusion $$( \wt{u}_m )_m \subseteq \L^p(\wt \Omega; \Cont([0,T]; \L^2(\Dom))) \cap \L^p(\wt \Omega \times (0,T); \H^1_0(\Dom))$$ holds with the uniform bound
		\begin{align}
			\label{eq:wt_um_unif}
			\sup_m \left( \| \wt{u}_m \|_{\L^p(\wt \Omega; \Cont([0,T]; \L^2(\Dom)))} + \| \wt{u}_m \|_{\L^p(\wt \Omega \times (0,T); \H^1_0(\Dom))} \right) \lesssim 1 + \| u_0 \|_{\L^p(\Omega; \B^{1-\nicefrac{2}{p}}_{2,p,0}(\Dom))}.
		\end{align}
	\end{corollary}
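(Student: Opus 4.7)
The plan is to transport the uniform bounds of Proposition~\ref{prop:unif_bounds1} from the original probability space $\Omega$ to the new space $\wt{\Omega}$ by exploiting the equality of laws in Lemma~\ref{lem:skorohod}~\ref{it:skorohod1}. The two ingredients I would rely on are (a) the fact that the relevant norms are Borel measurable with respect to the topology on $\X_u$ and (b) the estimates \eqref{eq:qlm_bound1} and \eqref{eq:qlm_bound2} already available for $u_m$. Since $\Law(\wt{u}_m) = \Law(u_m)$, applying any measurable nonnegative functional $\Psi$ to both sides yields $\wt{\Exp}\,\Psi(\wt u_m) = \Exp\,\Psi(u_m)$, so all uniform bounds transfer.

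For the path-norm, I would take $\Psi(f) = \| f \|_{\Cont([0,T]; \L^2(\Dom))}^p$. Since on $\X_u$ the first component carries the strong topology of $\Cont([0,T]; \L^2(\Dom))$, this $\Psi$ is continuous, hence Borel. The equality of laws then gives
\[
\wt{\Exp}\,\| \wt u_m \|_{\Cont([0,T]; \L^2(\Dom))}^p = \Exp\,\| u_m \|_{\Cont([0,T]; \L^2(\Dom))}^p,
\]
and I would combine \eqref{eq:qlm_bound1} with the continuous embedding $\B^{1-\nicefrac{2}{p}}_{2,p,0}(\Dom) \hookrightarrow \L^2(\Dom)$ (valid since $p>2$) to bound this by $1 + \| u_0 \|_{\L^p(\Omega; \B^{1-\nicefrac{2}{p}}_{2,p,0}(\Dom))}^p$.

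For the gradient-norm, I would take $\Psi(f) = \| f \|_{\L^p(0,T; \H^1_0(\Dom))}^p$. On $\X_u$ the second component carries only the weak topology of $\L^p(0,T; \H^1_0(\Dom))$, so $\Psi$ is not continuous there; however, it is weakly lower semicontinuous, and the weak and norm Borel $\sigma$-algebras coincide on a separable Banach space, so $\Psi$ is Borel measurable on $\X_u$. Equality of laws again gives
\[
\wt{\Exp}\,\| \wt u_m \|_{\L^p(0,T; \H^1_0(\Dom))}^p = \Exp\,\| u_m \|_{\L^p(0,T; \H^1_0(\Dom))}^p,
\]
and applying \eqref{eq:qlm_bound2} with $\theta = 0$ yields the desired uniform bound.

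The only step that requires any care is the measurability of the gradient-norm functional on the hybrid space $\X_u$, and this is handled either by the weakly-vs-strongly Borel argument above or, alternatively, by approximating $\Psi$ from below by weakly continuous functionals (say, suprema of finite families of linear functionals) and applying the monotone convergence theorem. There is no essential obstacle; the statement is really a bookkeeping consequence of Proposition~\ref{prop:unif_bounds1} and Lemma~\ref{lem:skorohod}~\ref{it:skorohod1}.
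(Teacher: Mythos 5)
Your proof is correct and is exactly the argument the paper leaves implicit (the corollary is stated without proof): transport the uniform bounds of Proposition~\ref{prop:unif_bounds1} via the equality of laws in Lemma~\ref{lem:skorohod}~\ref{it:skorohod1}, noting that the functionals are Borel with respect to the topology on $\X_u$. Your careful treatment of the weak topology on the $\L^p(0,T;\H^1_0(\Dom))$ factor matches the remark the paper already makes when introducing $\X_u$, namely that by separability the choice of weak versus norm topology has no bearing on measurability.
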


	According to~\cite[Lem.~4.8]{DHV}, $\wt{W}$ is a cylindrical Brownian motion.
	Fix $\alpha = 1, \dots, N$ and $\xi\in \Cont^\infty_c(\Dom)$. Define
	\begin{align}
		\wt{M}^\alpha(t) &= \lb  \wt{u}^\alpha(t), \xi\rb - \lb  \wt{u}^\alpha(0), \xi\rb + \int_{0}^t \lb  a^{\alpha\beta}_{ij}(\wt{u}) \partial_j \wt{u}^\beta, \partial_i \xi\rb \d r \\ & \qquad + \int_{0}^t \lb \Phi^\alpha_i(\wt{u}), \partial_i \xi\rb \d r - \int_{0}^t \lb \phi^\alpha(\wt{u}), \xi\rb \d r.
	\end{align}
	As presented in the proof of~\cite[Prop.~4.7]{DHV}, well-posedness of~\eqref{eq:ql} under Assumption~\ref{ass:ql_lip}, that is to say, validity of Theorem~\ref{thm:quasi1}, follows directly from the following lemma.

	\begin{lemma}
		\label{lem:martingales1}
		The processes
		\begin{align}\label{eq:martingaletobe}
			\begin{aligned}
				\wt{M}, & \  \wt{M}^2  - \sum_{n\geq 1} \int_0^\cdot  \big[\lb b^{\alpha\beta}_{n,j}(\wt{u}^\beta) \partial_j \wt{u}^\beta,\xi\rb + \lb g^\alpha_{n}(\wt{u}), \xi\rb\big]^2 \d r, \\ & \ \text{and} \ \wt{M} w_{n} - \int_0^\cdot \lb b^{\alpha\beta}_{n,j}(\wt{u}^\beta) \partial_j \wt{u}^\beta,\xi\rb + \lb g^\alpha_{n}(\wt{u}), \xi\rb \d r
			\end{aligned}
		\end{align}
		are $(\wt{\cF}_t)$-martingales, where $\wt{\cF}_t \coloneqq \sigma(\{\wt{u}(s), \wt{w}_n(s) \colon s\leq t, n\geq 1\})$.
	\end{lemma}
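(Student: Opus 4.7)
The plan is the standard martingale identification argument in the stochastic compactness method. For each $m$ the approximate equation~\eqref{eq:ql_m} is solved in the strong sense on the original probability space, so the process
\[
 M_m^\alpha(t) = \lb u_m^\alpha(t),\xi\rb - \lb u_m^\alpha(0),\xi\rb + \int_0^t \lb {}_m a^{\alpha\beta}_{ij}(u_m)\partial_j u_m^\beta,\partial_i \xi\rb + \lb \Phi_i^\alpha(u_m),\partial_i\xi\rb - \lb \phi^\alpha(u_m),\xi\rb \,\mathrm{d}r
\]
coincides with the stochastic integral against $W$. Hence $M_m$, $M_m^2 - \langle M_m\rangle$, and $M_m w_n - [M_m, w_n]$ are genuine $\cF_t$-martingales with the quadratic variation and cross brackets given by the expressions appearing in~\eqref{eq:martingaletobe}. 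By Lemma~\ref{lem:skorohod}\ref{it:skorohod1} the same martingale identities transfer to $\wt{M}_m$, $\wt{M}_m^2-\wt Q_m$ and $\wt{M}_m \wt w_{n,m} - \wt R_{n,m}$ with respect to $\wt\cF_t^m = \sigma(\wt u_m(s),\wt W_m(s):s\le t)$. Equivalently, $\wt{\Exp}[h(\wt u_m,\wt W_m)_{|[0,s]}(\wt M_m(t)-\wt M_m(s))]=0$ for every bounded continuous functional $h$ on the path space up to time $s$, and similarly for the other two.

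The remaining work is to pass to the limit $m\to\infty$ in these identities. By Lemma~\ref{lem:skorohod}\ref{it:skorohod2}, $h(\wt u_m,\wt W_m)_{|[0,s]}\to h(\wt u,\wt W)_{|[0,s]}$ $\wt\P$-a.s., so it suffices to show that each term entering $\wt M_m(t)$, $\wt Q_m(t)$, and $\wt R_{n,m}(t)$ converges in $\wt\P$-probability to the corresponding term in $\wt M(t)$, $\wt Q(t)$, $\wt R_n(t)$, and then to upgrade this to convergence of the expectation by uniform integrability. The pointwise/probabilistic convergence is easy for the Lipschitz nonlinearities $\Phi,\phi,g$ (strong $\L^2$-convergence of $\wt u_m\to\wt u$ in $\Cont([0,T];\L^2)$), and for the initial value and the linear term $\lb\wt u_m^\alpha(t),\xi\rb$. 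The nontrivial identification is the quasilinear term $\int_0^t \lb {}_m a^{\alpha\beta}_{ij}(\wt u_m)\partial_j \wt u_m^\beta,\partial_i\xi\rb \,\mathrm{d}r$: write it as $\int_0^t \langle \partial_j \wt u_m^\beta, {}_m a^{\alpha\beta}_{ij}(\wt u_m)\partial_i\xi\rangle\,\mathrm{d}r$ and observe that ${}_m a^{\alpha\beta}_{ij}\to a^{\alpha\beta}_{ij}$ locally uniformly in $y$, that $a$ is continuous and bounded, and that $\wt u_m\to\wt u$ in $\L^2((0,T)\times\Dom)$ a.s.; dominated convergence (using compact support of $\xi$) yields ${}_m a^{\alpha\beta}_{ij}(\wt u_m)\partial_i\xi \to a^{\alpha\beta}_{ij}(\wt u)\partial_i\xi$ strongly in $\L^{p'}(0,T;\L^2(\Dom))$ a.s., and combining with the weak convergence $\partial_j\wt u_m^\beta\rightharpoonup\partial_j\wt u^\beta$ in $\L^p(0,T;\L^2(\Dom))$ (Lemma~\ref{lem:skorohod}\ref{it:skorohod2}) gives the desired convergence.

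For the quadratic variation and cross-bracket it is here that the diagonal structure on $b$ is used. Define the antiderivative $B^{\alpha\beta}_{n,j}(t,x,y)\coloneqq \int_0^{y} b^{\alpha\beta}_{n,j}(t,x,s)\,\mathrm{d}s$ and rewrite (using the chain rule and that $b$ depends on the scalar $u^\beta$ alone)
\[
 \lb b^{\alpha\beta}_{n,j}(\wt u_m^\beta)\partial_j \wt u_m^\beta,\xi\rb = -\lb B^{\alpha\beta}_{n,j}(\wt u_m^\beta),\partial_j\xi\rb - \lb(\partial_j B^{\alpha\beta}_{n,j})(\wt u_m^\beta),\xi\rb.
\]
Since $b$ is bounded, $B^{\alpha\beta}_{n,j}$ is Lipschitz in $y$, so by Lipschitz continuity and strong $\L^2$-convergence of $\wt u_m^\beta$ the right-hand side converges a.s. in $\L^2(0,T)$ to the corresponding expression with $\wt u$ (the $\partial_j B$-term is handled analogously using Assumption~\ref{ass:ql}\ref{ass:2ndorder11}); convergence of the $\ell^2$ sum follows from boundedness of $b$ in $\ell^2$ via dominated convergence. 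This makes $\wt Q_m(t)$ and $\wt R_{n,m}(t)$ converge in probability.

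The final ingredient, and the main technical obstacle, is upgrading the convergence in probability to convergence of expectations. Here the strength of the extrapolation $p>2$ is decisive: by Corollary~\ref{cor:skorohod}, $(\wt u_m)_m$ is bounded in $\L^p(\wt\Omega;\Cont([0,T];\L^2(\Dom)))\cap \L^p(\wt\Omega\times(0,T);\H^1_0(\Dom))$. Combined with uniform boundedness of $a$ and $b$ and linear growth of $\Phi$, $\phi$, $g$, this provides a uniform-in-$m$ $\L^{p/2}$-bound with $p/2>1$ on each term appearing in $h(\wt u_m,\wt W_m)_{|[0,s]}\cdot\wt M_m(t)$ and on the bracket expressions; uniform integrability follows and Vitali's theorem yields
\[
 \wt{\Exp}[h(\wt u,\wt W)_{|[0,s]}(\wt M(t)-\wt M(s))]=0,
\]
together with the analogous identities for the bracket processes. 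This is exactly the martingale property with respect to $\wt\cF_t$, completing the proof.
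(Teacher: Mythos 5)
Your proposal follows essentially the same route as the paper's proof: introduce the approximating martingales $M_m^\alpha$ on the original space, transfer them to $\wt M_m^\alpha$ via equality of laws (Lemma~\ref{lem:skorohod}\ref{it:skorohod1}), pass to the limit using Vitali's theorem backed by the uniform $\L^p$-bounds of Corollary~\ref{cor:skorohod} with $p>2$, identify the limit of the quasilinear $a$-term by pairing strong convergence of $a_m(\wt u_m)\partial_i\xi$ against the weak convergence of $\nabla\wt u_m$, and use the antiderivative trick $\bar b^{\alpha\beta}_{n,j}(y)=\int_0^y b^{\alpha\beta}_{n,j}$ to convert the $b\cdot\nabla u$ term into a function of $\wt u$ alone, which converges by strong $\L^2$-convergence. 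The only minor points where you are slightly looser than the paper are that you drop the mollification $R_m$ in the argument of ${}_m a$ and ${}_m b$ (the approximating equation~\eqref{eq:ql_m} uses ${}_m a^{\alpha\beta}_{ij}(R_m u_m)$ and ${}_m b^{\alpha\beta}_{n,j}(R_m u_m^\beta)$, which needs one extra pointwise-a.e. step before the antiderivative trick applies) and that you do not explicitly invoke the monotone class theorem to pass from continuous functionals $h$ of the path up to time $s$ to arbitrary bounded $\wt\cF_s$-measurable random variables; both are routine to fill in and do not affect the overall structure.
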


	\begin{proof}
		In a first step, we showcase the general strategy for the process $\wt{M}$. Afterwards, we explain the necessary changes for the two remaining processes.

		\textbf{Step 1}: $\wt{M}$ is an $(\wt{\cF}_t)$-martingale.

		Introduce the processes
		\begin{align}
			\label{eq:martingale_candidates}
			M^\alpha_m(t) &= \lb  u^\alpha_m(t), \xi\rb - \lb  u^\alpha_0, \xi\rb + \int_{0}^t \lb  {}_m a^{\alpha\beta}_{ij}(R_m u_m) \partial_j u^\beta_m, \partial_i \xi\rb \d r \\ & \qquad + \int_{0}^t \lb \Phi^\alpha_i(u_m), \partial_i \xi\rb \d r - \int_{0}^t \lb \phi^\alpha(u_m), \xi\rb \d r,
			\\
			\wt{M}^\alpha_m(t) &= \lb  \wt{u}^\alpha_m(t), \xi\rb - \lb  \wt{u}^\alpha_m(0), \xi\rb + \int_{0}^t \lb  {}_m a^{\alpha\beta}_{ij}(R_m \wt{u}_m) \partial_j \wt{u}^\beta_m, \partial_i \xi\rb \d r \\ & \qquad + \int_{0}^t \lb \Phi^\alpha_i(\wt{u}_m), \partial_i \xi\rb \d r - \int_{0}^t \lb \phi^\alpha(\wt{u}_m), \xi\rb \d r.
		\end{align}
		For each $m$, $M^\alpha_m$ is an $(\cF_t)$-martingale because $u_m$ is a solution to~\eqref{eq:ql_m}.

		Fix $\alpha$ and $s\leq t$ throughout the proof. Let $\rho_s$ denote the canonical restriction $\Cont([0,T]; \X) \to \Cont([0,s]; \X)$ and fix any continuous function $\gamma \colon \Cont([0,s]; \X) \to [0,1]$. One has that $$\gamma_m \coloneqq \gamma(\rho_s(u_m, W))$$ is $\cF_s$-measurable. Therefore, conditioning and the martingale property yield
		\begin{align}
			\label{eq:mart1}
			\Exp \Bigl(\gamma_m [M^\alpha_m(t) - M^\alpha_m(s)]\Bigr) = 0.
		\end{align}
		We claim that $\gamma_m$, $M^\alpha_m(t)$, and $M^\alpha_m(s)$ depend on $u$ in a measurable way. Indeed, these quantities even depend continuously on $u$ if we equip $\L^p(0,T; \H^1_0(\Dom))$ with the strong topology in the definition of $\X$. As already mentioned, passing to the weak topology preserves measurability, which gives the claim. Therefore, with Lemma~\ref{lem:skorohod}~\ref{it:skorohod1} deduce from~\eqref{eq:mart1} that
		\begin{align}
			\label{eq:mart2}
			\wt\Exp \Bigl(\wt{\gamma}_m [\wt{M}^\alpha_m(t) - \wt{M}^\alpha_m(s)]\Bigr) = 0,
		\end{align}
		where $\wt{\gamma}_m$ and $\wt{M}^\alpha_m$ are defined by the same expressions as $\gamma_m$ and $M^\alpha_m$, but with $u_m$ replaced by $\wt{u}_m$ and $W$ replaced by $\wt{W}$.

		Next, we take the limit $m\to \infty$ in~\eqref{eq:mart2}. To do so, we appeal to Vitali's convergence theorem. Owing to Corollary~\ref{cor:skorohod}, we have for instance
		\begin{align}
			\label{eq:mart_vitali1}
			\wt\Exp \left|\int_{0}^t \lb  {}_m a^{\alpha\beta}_{ij}(R_m \wt{u}_m) \partial_j \wt{u}^\beta_m, \partial_i \xi\rb \d r\right|^p &\lesssim T^{p-1} \wt\Exp \int_0^T \| \nabla \wt{u}_m \|_{\L^2(\Dom)}^p \\
			&\lesssim 1 + \| u_0 \|_{\L^p(\Omega; \B^{1-\nicefrac{2}{p}}_{2,p,0}(\Dom))}^p,
		\end{align}
		which justifies the application of Vitali's convergence theorem.
		It remains to determine the almost-sure limit of $\wt{\gamma}_m [\wt{M}^\alpha_m(t) - \wt{M}^\alpha(t)]$. On the one hand, by continuity of $\gamma \circ \rho_s$ on $\X$, deduce $\wt{\gamma}_m \to \wt{\gamma}$ almost surely from Lemma~\ref{lem:skorohod}~\ref{it:skorohod2}. On the other hand, we claim that $\wt{M}^\alpha_m(t) \to \wt{M}^\alpha(t)$ almost surely. Convergence of the first two terms of $\wt{M}^\alpha_m(t)$ is immediate. For the fourth and fifth term, use that $\Phi$ and $\phi$ are Lipschitz continuous along with almost sure convergence in $\Cont([0,T]; \L^2(\Dom)) \subseteq \L^1(0,T; \L^2(\Dom))$. The most challenging term is the third. Rewrite
		\begin{align*}
			&\int_{0}^t \lb {}_m a^{\alpha\beta}_{ij}(R_m \wt{u}_m) \partial_j \wt{u}^\beta_m, \partial_i \xi\rb \d r - \int_{0}^t \lb a^{\alpha\beta}_{ij}(\wt{u}) \partial_j \wt{u}^\beta, \partial_i \xi\rb \d r \\
			={} &\int_{0}^t \lb \partial_j \wt{u}^\beta_m, ({}_m a^{\alpha\beta}_{ij}(R_m \wt{u}_m) -  a^{\alpha\beta}_{ij}(\wt{u})) \partial_i \xi\rb \d r + \int_{0}^t \lb \partial_j (\wt{u}^\beta_m - \wt{u}^\beta), a^{\alpha\beta}_{ij}(\wt{u}) \partial_i \xi\rb \d r \\
			={} &\I + \II.
		\end{align*}
		We use the convergence properties from Lemma~\ref{lem:skorohod}~\ref{it:skorohod2} as follows to conclude: using the weak convergence of $\nabla \wt{u}_m$ in $\L^2(0,t;\L^2(\Dom))$ we see that $\II \to 0$ as $m\to \infty$ almost surely. Moreover, $\nabla \wt{u}_m$ is uniformly bounded in $\L^p(0,T; \L^2(\Dom)) \subseteq \L^2(0,t; \L^2(\Dom))$ almost surely. Consequently, $\I$ goes to zero if $({}_m a^{\alpha\beta}_{ij}(R_m \wt{u}_m) -  a^{\alpha\beta}_{ij}(\wt{u})) \partial_i \xi \to 0$ in $\L^2(0,t; \L^2(\Dom))$. Split further
		\begin{align}
			{}_m a^{\alpha\beta}_{ij}(R_m \wt{u}_m) - a^{\alpha\beta}_{ij}(\wt{u})
			={} ({}_m a^{\alpha\beta}_{ij}(R_m \wt{u}_m) - a^{\alpha\beta}_{ij}(R_m \wt{u}_m)) + (a^{\alpha\beta}_{ij}(R_m \wt{u}_m) - a^{\alpha\beta}_{ij}(\wt{u})).
		\end{align}
		Since the coefficients are uniformly bounded and $\xi$ is smooth and compactly supported, we have Vitali's convergence theorem in $(r,x)$ at our disposal.
Up to passing to another subsequence, we can suppose $R_m \wt{u}_m \to \wt{u}$ pointwise almost everywhere in $(r,x)$.
In particular, for almost every $(r,x)$, the closure of the set $\{ R_m \wt{u}_m(r,x) \colon m \in \N \}$ is compact.
Then, the claim follows by continuity of $a^{\alpha\beta}_{ij}$ in conjunction with the convergence ${}_m a^{\alpha\beta}_{ij} \to a^{\alpha\beta}_{ij}$ uniformly on compact subsets.

		In conclusion, we deduce
		\begin{align}
			\label{eq:mart3}
			\wt\Exp\Bigl(\wt{\gamma} [\wt{M}^\alpha(t) - \wt{M}^\alpha(s)]\Bigr) = 0.
		\end{align}
		But~\eqref{eq:mart3} remains valid if we replace $\wt{\gamma}$ by \emph{any} bounded and $\wt{\cF}_s$-measurable function in virtue of the monotone class theorem, since $\wt{\cF}_s$ is by definition the $\sigma$-field induced by $\wt{u}$ and $\wt{W}$. It follows that $\wt{M}^\alpha$ is an $(\wt{\cF}_t)$-martingale as claimed.

		\textbf{Step 2}: the remaining two processes.

		The general strategy is as in Step~1 and we only discuss the necessary modifications. The martingale property for the processes related to $u_m$ is again clear. We follow the proof of Step~1 until we reach the analogues of~\eqref{eq:mart2}. The first step in the identification of the limit, the moment condition~\eqref{eq:mart_vitali1}, can be reused with the following observations: first of all, since the moment condition for one term holds with $p>2$, any product appearing in $(\wt{M}_m^\alpha)^2$ satisfies a moment condition with $\nicefrac{p}{2} > 1$ in virtue of H\"older's inequality. The same is true for $\wt{M}_m w_k$, of course. Likewise, one has
		\begin{align}
			\label{eq:mart_vitali2}
			\wt\Exp \left|\int_0^t  \lb {}_m b^{\alpha\beta}_{n,j}(R_m \wt{u}_m^\beta) \partial_j \wt{u}_m^\beta,\xi\rb^2 \d r \right|^{\nicefrac{p}{2}} &\lesssim T^{\nicefrac{p}{2}-1} \wt\Exp \int_0^T \| \nabla \wt{u}_m \|_{\L^2(\Dom)}^p \\
			&\lesssim 1 + \| u_0 \|_{\L^p(\Omega; \B^{1-\nicefrac{2}{p}}_{2,p,0}(\Dom))}^p.
		\end{align}
		It remains to verify almost sure convergence, for instance
		\begin{align}
			\label{eq:conv_b}
			\int_0^t  \lb {}_m b^{\alpha\beta}_{n,j}(R_m \wt{u}_m^\beta) \partial_j \wt{u}_m^\beta,\xi\rb^2 \d r \to \int_0^t  \lb b^{\alpha\beta}_{n,j}(\wt{u}^\beta) \partial_j \wt{u}^\beta,\xi\rb^2 \d r,
		\end{align}
		for fixed $n$.
		First, we claim that we can replace ${}_m b^{\alpha\beta}_{n,j}(R_m \wt{u}_m^\beta)$ by $b^{\alpha\beta}_{n,j}(\wt{u}^\beta)$ on the left-hand side of~\eqref{eq:conv_b}. Indeed, the calculation is similar to the treatment of term~$\I$ in Step~1, but one has to use uniform boundedness of $\nabla \wt{u}_m$ in $\L^p(0,T; \L^2(\Dom))$ with $p>2$.
		Next, define the auxiliary function
		\begin{align}
			\label{eq:aux_antideriv}
			\bar{b}_{n,j}^{\alpha\beta}(r,x,y) \coloneqq \int_0^y b_{n,j}^{\alpha\beta}(r,x,z) \d z, \qquad (y\in \R).
		\end{align}
		By the chain rule, $\partial_j \bar{b}_{n,j}^{\alpha\beta}(r,x,\wt{u}(r,x)) = b_{n,j}^{\alpha\beta}(r,x,\wt{u}(r,x)) \partial_j \wt{u}(r,x)  + \Theta^{\alpha\beta}_{n,j}(r,x,\wt{u}^{\beta}(r,x))$,
where $\Theta^{\alpha\beta}_{n,j}(r,x,y) = \int_0^{y} \partial_j b_{n,j}^{\alpha\beta}(r,x,z) \d z$. Therefore, to prove \eqref{eq:conv_b} it is enough to show
		\begin{align}
			\int_0^t  \lb \bar{b}^{\alpha\beta}_{n,j}(\wt{u}^\beta) - \bar{b}^{\alpha\beta}_{n,j}(\wt{u}_m^\beta), \partial_j \xi\rb^2 \d r \to 0 \ \ \text{and} \ \ \int_0^t \lb \Theta^{\alpha\beta}_{n,j}(\cdot,\wt{u}^{\beta}) - \Theta^{\alpha\beta}_{n,j}(\cdot,\wt{u}_m^{\beta}), \xi\rb^2 \d r\to 0.
		\end{align}
		For the first term, using Vitali's convergence theorem once more, we only need to show convergence for a fixed time $r\in (0,t)$. Now since $\wt{u}_m^\beta\to \wt{u}^\beta$ in $\L^2(\Dom)$, we can conclude the proof by passing to another subsequence and using continuity of $\bar{b}^{\alpha\beta}_{n,j}$. For the second term, the uniform boundedness of $\partial_j b_{n,j}^{\alpha\beta}$ implies that $|\Theta^{\alpha\beta}_{n,j}(\cdot,\wt{u}^{\beta}) - \Theta^{\alpha\beta}_{n,j}(\cdot,\wt{u}_m^{\beta})|\lesssim |\wt{u}^{\beta} - \wt{u}^{\beta}_m|$. Therefore, the desired convergence follows from $\wt{u}_m\to \wt{u}$ in $\L^2(0,t;\L^2(\Dom))$. 

		Let us stress that the last argument heavily relies on the structural assumption on $b$, and is the reason why we cannot allow full non-linear dependence on all components of $u$ as is the case for $a$.
	\end{proof}

	\begin{remark}
		We stress that the above proof critically uses the $p$-integrability of $\nabla \wt{u}_m$ in $\wt\Omega \times (0,T)$. Indeed, to get $\L^1(\Omega)$-convergence of \eqref{eq:mart_vitali2} we used Vitali's convergence theorem based on the uniform boundedness of $\nabla \wt{u}_m$ in $\L^p(\Omega \times (0,T); \L^2(\Dom))$ with $p>2$.
That such a bound is available for $p>2$ is a novelty of our approach.
	\end{remark}

	\section{Quasilinear problem with growing non-linearities}
	\label{sec:ql_growing}

	We reduce the general case with growing non-linearities to the Lipschitz case from the last section. This happens using a truncation argument. We use the following classical result from convex analysis (see \cite[Proposition 5.3]{B11}) to include systems in a neat way.

	\begin{lemma}[Phelps]
		\label{lem:projection}
		Let $C$ be a non-empty, closed, convex subset of a Hilbert space $K$. Then the projection onto $C$ is Lipschitz continuous with Lipschitz constant $1$.
	\end{lemma}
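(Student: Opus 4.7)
The plan is to follow the textbook argument based on the variational characterization of the projection. First I would establish existence and uniqueness of the projection $P_C(x)$ for fixed $x \in K$: taking a minimizing sequence $(c_n) \subset C$ for $c \mapsto \|x-c\|$, the parallelogram identity applied to $x-c_n$ and $x-c_m$ combined with convexity of $C$ (so that $\tfrac{c_n+c_m}{2} \in C$) shows $(c_n)$ is Cauchy; completeness of $K$ and closedness of $C$ give a limit $p \in C$, and strict convexity of the norm yields uniqueness.

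Next I would derive the variational characterization: $p = P_C(x)$ if and only if
\[
\Re\langle x-p,\, c-p\rangle \leq 0 \qquad \text{for all } c \in C.
\]
This is obtained by differentiating $t \mapsto \|x - (p + t(c-p))\|^2$ at $t=0^+$ for arbitrary $c \in C$, which is legal since $p + t(c-p) \in C$ for $t \in [0,1]$ by convexity.

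Finally, for $x,y \in K$, set $p = P_C(x)$ and $q = P_C(y)$. Applying the variational inequality with test point $q$ for $p$, and with test point $p$ for $q$, and adding gives
\[
\Re\langle x - p - (y - q),\, q - p\rangle \leq 0,
\]
which rearranges to $\|p-q\|^2 \leq \Re\langle x-y, p-q\rangle$. Cauchy--Schwarz then yields $\|P_C(x)-P_C(y)\| = \|p-q\| \leq \|x-y\|$.

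There is no real obstacle here; the result is classical and the only subtlety is keeping the argument valid in the complex case (hence the $\Re$ in the variational inequality), which is harmless. Since the paper merely cites \cite[Proposition 5.3]{B11}, a short self-contained proof along the lines above — or simply a reference — suffices.
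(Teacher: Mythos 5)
Your proof is correct and is the standard argument; the paper itself gives no proof and simply cites \cite[Proposition 5.3]{B11}, where essentially this same route (existence/uniqueness via the parallelogram identity, the variational inequality $\Re\langle x-p,\,c-p\rangle\leq 0$, then applying it symmetrically and using Cauchy--Schwarz) is taken. Your remark about inserting $\Re$ to cover the complex case is the right touch, since the paper's Hilbert spaces may be complex.
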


	With the preceding lemma, we can consider truncated non-linearities $\Phi \circ P$ and $\phi \circ P$, where $P$ is a projection to a bounded and convex set. As in Section~\ref{subsec:semi}, we would like to consider $f=\Phi_i (P(u)) + \phi(P(u))$ as a fixed right-hand side. However, that $f$ is an admissible right-hand side is more difficult with growing non-linearities. We will take care of this in Proposition~\ref{prop:ito_trace}. The next two lemmas are a preparation for the last-mentioned proposition.

	\begin{lemma}[{\cite[Lem.~8]{DMS}}]
		\label{lem:p-approx}
		For $q>2$ and $m\geq 1$, the function $\psi_m \colon \R \to \R$ given by
		\begin{align}
			\psi_m(\xi) = \begin{cases}
				|\xi|^q, & |\xi| \leq m, \\
				m^{q-2} \Bigl[ \frac{q(q-1)}{2} \xi^2 -q(q-2)m |\xi| + \frac{(q-1)(q-2)}{2} m^2\Bigr], & |\xi| > m
			\end{cases}
		\end{align}
		is twice continuously differentiable, has bounded second derivative, and satisfies the following properties:
		\begin{align}
			|\psi_m'(\xi)| &\leq |\xi|\psi_m''(\xi), \label{eq:phim_lingro} \\
			\frac{\psi_m'(\xi)}{\xi} &\geq 0, \mathrlap{\quad (\xi \neq 0),} \label{eq:phim_derpos} \\
			\psi_m''(\xi) &\leq q(q-1) (1+\psi_m(\xi)), \label{eq:phim_snd_b1}  \\
			\xi^2 \psi_m''(\xi) &\leq q(q-1) \psi_m(\xi), \label{eq:phim_snd_b2} \\
			\psi_m''(\xi_1) &\leq \psi_m''(\xi_2), \mathrlap{\quad (|\xi_1| \leq |\xi_2|).} \label{eq:phim_snd_inc}
		\end{align}
		Moreover, $\psi_m \to |\cdot|^q$ and $\psi_m'' \to |\cdot|^{q-2}$ pointwise.
	\end{lemma}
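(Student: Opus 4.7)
My plan is a direct piecewise verification. First I would compute $\psi_m'$ and $\psi_m''$ separately on the two open regions $\{|\xi|<m\}$ and $\{|\xi|>m\}$: on the inner region, $\psi_m(\xi)=|\xi|^q$ gives $\psi_m''(\xi)=q(q-1)|\xi|^{q-2}$, while on the outer region the quadratic polynomial differentiates to $\psi_m''(\xi)=q(q-1)m^{q-2}$. To conclude $\psi_m\in C^2(\R)$ I would verify that the values, the first derivatives, and the second derivatives all agree at $\xi=\pm m$; the coefficients of the outer quadratic are engineered precisely to force this $C^2$-matching at the junction, and this short but essential algebra is the whole point of the definition. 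Boundedness of $\psi_m''$ is then immediate from the formulas, since $q(q-1)|\xi|^{q-2}\le q(q-1)m^{q-2}$ on the inner region (using $q>2$), and $\psi_m''$ equals that constant on the outer region.

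For the five inequalities I would proceed piecewise. Properties \eqref{eq:phim_lingro} and \eqref{eq:phim_derpos} reduce on the inner region to the elementary fact $q\le q(q-1)$ for $q\ge 2$; on the outer region, dropping the subtracted term from $\psi_m'$ gives the desired bound by $|\xi|\psi_m''(\xi)$, and the sign of $\psi_m'$ is visible from the explicit formula. For \eqref{eq:phim_snd_b1} on the inner region I would split into $|\xi|\le 1$ (where $|\xi|^{q-2}\le 1$ because $q>2$) and $|\xi|\ge 1$ (where $|\xi|^{q-2}\le|\xi|^q$); on the outer region, the bound $\psi_m(\xi)\ge m^q$ does the job. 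Property \eqref{eq:phim_snd_inc} reads off the formulas directly: $\psi_m''$ is radially increasing on the inner region and constant on the outer region, with the two values agreeing at $|\xi|=m$.

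The one step that requires a genuine computation is \eqref{eq:phim_snd_b2} on the outer region. Setting $y=|\xi|$ and clearing the common factor $m^{q-2}$, the inequality becomes
\[(q+1)y^2-2qmy+(q-1)m^2\ge 0 \quad\text{for}\quad y\ge m.\]
This quadratic in $y$ has discriminant $4m^2$ and thus roots $y=m$ and $y=\tfrac{q-1}{q+1}m$; since its leading coefficient is positive and the larger root is exactly $m$, the inequality holds on $[m,\infty)$. On the inner region the statement \eqref{eq:phim_snd_b2} is actually an equality. Finally, pointwise convergence is a triviality: for any fixed $\xi$ and all $m\ge|\xi|$ one has $\psi_m(\xi)=|\xi|^q$ and $\psi_m''(\xi)=q(q-1)|\xi|^{q-2}$ identically, so the convergence is stationary in $m$. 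Thus the only genuine content sits in the factorization behind \eqref{eq:phim_snd_b2}; everything else is bookkeeping driven by the $C^2$-matching at $|\xi|=m$, which I expect to be the main source of tedious-but-necessary algebra.
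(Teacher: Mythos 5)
The paper cites this lemma from \cite{DMS} without supplying a proof, so there is no internal argument to compare against; you are on your own, and your direct piecewise verification is correct. The $C^2$-matching at $|\xi|=m$ is exactly as you describe (the outer quadratic's coefficients sum to $1$ at $\xi=m$, the first derivatives both give $qm^{q-1}$, and the second derivatives both give $q(q-1)m^{q-2}$); the inner-region inequalities reduce to $q\geq 2$ and $|\xi|^{q-2}\leq 1+|\xi|^q$; on the outer region your sign computation $\psi_m'(\xi)=m^{q-2}\bigl[q(q-1)\xi-q(q-2)m\bigr]>0$ for $\xi>m$ and the drop of the negative term handle \eqref{eq:phim_lingro} and \eqref{eq:phim_derpos}; monotonicity \eqref{eq:phim_snd_inc} is by inspection. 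The genuine content, as you say, is \eqref{eq:phim_snd_b2}: after clearing $q(q-1)m^{q-2}$ and multiplying by $2$, the inequality becomes $(q-2)\bigl[(q+1)y^2-2qmy+(q-1)m^2\bigr]\geq 0$ for $y=|\xi|\geq m$, and the quadratic factors as $(q+1)(y-m)\bigl(y-\tfrac{q-1}{q+1}m\bigr)$, nonnegative on $[m,\infty)$. One small caveat you should make explicit: your own computation correctly yields $\psi_m''\to q(q-1)|\cdot|^{q-2}$ pointwise, not $|\cdot|^{q-2}$ as the lemma text asserts; the constant $q(q-1)$ is missing in the statement. This is harmless for the paper's use (the limit is only invoked up to multiplicative constants in Proposition~\ref{prop:ito_trace}), but a careful write-up should note the discrepancy rather than silently record the right limit while nominally proving the wrong one.
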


	The following technical lemma relates the dissipativity of $\phi$ stated in Assumption~\ref{ass:ql}~\ref{ass:2ndorder14} with the auxiliary function $\psi_m$.

	\begin{lemma}
		\label{lem:dissip}
		Let $v\in \R^N$. For all $q \in (2, \infty)$ and $m \geq 1$ one has for $\psi_m$ defined as in Lemma~\ref{lem:p-approx} the estimate
		\begin{align}
			\sum_\alpha \psi_m'(v^\alpha) \phi^\alpha(v) \lesssim_q 1 + \sum_\beta \psi_m(v^\beta).
		\end{align}
	\end{lemma}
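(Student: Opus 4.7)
The plan is to start from the observation that the sign of $\psi_m'(v^\alpha)$ coincides with the sign of $v^\alpha$ by \eqref{eq:phim_derpos}. This is what allows one to access the dissipativity of $\phi$, which is phrased via $\phi^\alpha(v) v^\alpha$. The strategy is then to replace the factor $\psi_m'(v^\alpha)$ by $\frac{\psi_m'(v^\alpha)}{v^\alpha} v^\alpha$, use the quantitative bound \eqref{eq:phim_lingro} $|\psi_m'(v^\alpha)| \leq |v^\alpha| \psi_m''(v^\alpha)$, and finally absorb the resulting $\psi_m''$ into $\psi_m$ via the quadratic bound \eqref{eq:phim_snd_b2}.

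Concretely, I would first argue that we may restrict to those indices $\alpha$ for which $\psi_m'(v^\alpha) \phi^\alpha(v) > 0$, since the remaining terms only improve the upper bound. For such an index, the products $\psi_m'(v^\alpha) \phi^\alpha(v)$ and $v^\alpha \phi^\alpha(v)$ share the same sign, so dissipativity \eqref{eq:phi_dissip} yields $\phi^\alpha(v) v^\alpha \leq C(|v|^2+1)$ \emph{with a matching sign pattern}, giving
\[
\psi_m'(v^\alpha) \phi^\alpha(v) = \frac{\psi_m'(v^\alpha)}{v^\alpha} \cdot v^\alpha \phi^\alpha(v) \leq \frac{|\psi_m'(v^\alpha)|}{|v^\alpha|} \, C(|v|^2+1) \leq C \psi_m''(v^\alpha) (|v|^2 + 1),
\]
using \eqref{eq:phim_lingro} in the final step. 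The case $v^\alpha = 0$ is trivial, since then $\psi_m'(v^\alpha) = 0$.

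Next I would expand $|v|^2 = \sum_\beta (v^\beta)^2$ and bound each cross term $\psi_m''(v^\alpha) (v^\beta)^2$. The monotonicity property \eqref{eq:phim_snd_inc} and \eqref{eq:phim_snd_b2} combine to give
\[
\psi_m''(v^\alpha) (v^\beta)^2 \leq \max\bigl(\psi_m''(v^\alpha) (v^\alpha)^2,\, \psi_m''(v^\beta) (v^\beta)^2\bigr) \leq q(q-1) \bigl( \psi_m(v^\alpha) + \psi_m(v^\beta) \bigr),
\]
by splitting into the cases $|v^\alpha| \geq |v^\beta|$ and $|v^\alpha| \leq |v^\beta|$. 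Finally, the lone $\psi_m''(v^\alpha)$ term is handled by \eqref{eq:phim_snd_b1}. Summing over $\alpha$ and $\beta$ and absorbing constants (which depend on $q$ and on $N$, but $N$ is fixed) yields the claimed bound
\[
\sum_\alpha \psi_m'(v^\alpha) \phi^\alpha(v) \lesssim_q 1 + \sum_\beta \psi_m(v^\beta).
\]

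There is no real obstacle here; the main point requiring care is the sign bookkeeping in the first step, because dissipativity \eqref{eq:phi_dissip} provides only an upper (one-sided) bound on $\phi^\alpha(v) v^\alpha$, so one must genuinely use that only terms of a fixed sign contribute to the upper estimate before factoring.
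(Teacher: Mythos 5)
Your proof is correct and takes essentially the same approach as the paper: factor $\psi_m'(v^\alpha)\phi^\alpha(v)=\frac{\psi_m'(v^\alpha)}{v^\alpha}\cdot v^\alpha\phi^\alpha(v)$, use nonnegativity of $\frac{\psi_m'(v^\alpha)}{v^\alpha}$ together with \eqref{eq:phi_dissip} and \eqref{eq:phim_lingro}, then control $\psi_m''(v^\alpha)(1+|v|^2)$ termwise via \eqref{eq:phim_snd_b1}, \eqref{eq:phim_snd_b2}, and the case distinction on $|v^\alpha|$ vs.\ $|v^\beta|$ with \eqref{eq:phim_snd_inc}. The preliminary restriction to indices with $\psi_m'(v^\alpha)\phi^\alpha(v)>0$ is harmless but unnecessary: since $\frac{\psi_m'(v^\alpha)}{v^\alpha}\ge0$, multiplying the one-sided bound \eqref{eq:phi_dissip} by this factor already yields the desired upper estimate for every $\alpha$ with $v^\alpha\ne0$.
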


	\begin{proof}
		Let $v\in \R^N$ and fix $\alpha$ for the moment. We can assume $v^\alpha \neq 0$. Calculate using~\eqref{eq:phim_derpos}, Assumption~\ref{ass:ql}~\ref{ass:2ndorder14}, and~\eqref{eq:phim_lingro} that
		\begin{align}
			\psi_m'(v^\alpha) \phi^\alpha(v) = \frac{\psi_m'(v^\alpha)}{v^\alpha} v^\alpha \phi^\alpha(v) \lesssim \frac{\psi_m'(v^\alpha)}{v^\alpha} (1 + |v|^2) \lesssim \psi_m''(v^\alpha) (1 + |v|^2).
		\end{align}
		Using~\eqref{eq:phim_snd_b1}, the term $\psi_m''(v^\alpha)$ can be controlled by $1 + \psi_m(v^\alpha)$. Likewise, $\psi_m''(v^\alpha) |v^\alpha|^2$ is controlled by $\psi_m(v^\alpha)$ owing to~\eqref{eq:phim_snd_b2}. It remains to consider $\psi_m''(v^\alpha) |v^\beta|^2$. We distinguish the cases $|v^\alpha| \leq |v^\beta|$ and $|v^\alpha| \geq |v^\beta|$. In the first case, we use that $\psi_m''$ is increasing,~\eqref{eq:phim_snd_inc}, to reduce to the known case $\psi_m''(v^\beta) |v^\beta|^2$. Likewise, we reduce in the second case using that $|\cdot|^2$ is increasing. Finally, summing in $\alpha$ gives the claim.
	\end{proof}

	\begin{proposition}[Bootstrapping integrability]
		\label{prop:ito_trace}
		Suppose Assumption \ref{ass:ql} holds, and for the moment suppose that there is a constant $L$ such that
\[|\phi(t,x,y)|+ |\Phi(t,x,y)|\leq L(1+|y|), \ \ \ t\in [0,T],x\in D,y\in \R^N.\]
 Let $u_0\in \L^{q}_{\cF_0}(\Omega \times \Dom)$ with $q\in [2, \infty)$ fixed and let $v\in \L^2(\Omega\times (0,T); \H^1_0(\Dom)) \cap \L^2(\Omega; \Cont([0,T]; \L^2(\Dom)))$ be a solution to \eqref{eq:ql} with initial datum $u_0$. Then one has
		\begin{align}
			\|v\|_{\L^q(\Omega \times (0,T) \times \Dom)}^q + \Exp \int_0^T \int_{\Dom} |v|^{q-2}  |\nabla v|^2 \d x \d s \lesssim 1+\|u_0\|_{\L^q(\Omega;\L^q(\Dom))}^q,
		\end{align}
where the implicit constant does not depend on $L$.
	\end{proposition}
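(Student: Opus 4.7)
The plan is to apply an Itô formula for $V_m(t) := \sum_\alpha \int_D \psi_m(v^\alpha(t,x)) \d x$, where $\psi_m$ is the $C^2$ approximation of $|\cdot|^q$ from Lemma~\ref{lem:p-approx}, exploit coercivity together with the quadratic variation of the gradient noise to produce the dissipative term $-\lambda \int \psi_m''(v^\alpha)|\nabla v^\alpha|^2 \d x$, control the transport nonlinearity $\Phi$ via the crucial integration-by-parts cancellation of $\overline{\Phi}$, absorb $\phi$ by Lemma~\ref{lem:dissip}, and finally close with Gronwall and $m \to \infty$ via Fatou. The reason the constant does not depend on $L$ is that the only place where $L$ could enter, namely the full $\Phi$ in the drift, is either killed by the boundary/structure argument ($\overline{\Phi}$ part) or replaced by its Lipschitz constant (the $\wh{\Phi}$ part), which is an Assumption~\ref{ass:ql} datum independent of~$L$.

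\textbf{Key computation.} Since $\psi_m''$ is bounded, Itô's formula is applicable to $V_m$ for the variational solution $v$ (if needed, via a spatial mollification and passage to the limit). The drift contains three structural terms after integration by parts in $x$:
\begin{align*}
-\int_D \psi_m''(v^\alpha) a^\alpha_{ij}(v)\partial_i v^\alpha \partial_j v^\alpha \d x, \qquad -\int_D \psi_m''(v^\alpha)\Phi_i^\alpha(v)\partial_i v^\alpha \d x, \qquad \int_D \psi_m'(v^\alpha)\phi^\alpha(v)\d x,
\end{align*}
and the quadratic variation contributes $\tfrac12 \sum_n \int_D \psi_m''(v^\alpha)(b^\alpha_{n,j}(v^\alpha)\partial_j v^\alpha + g_n^\alpha(v))^2 \d x$. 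Combining the elliptic term with the $b$-diagonal part of the quadratic variation and invoking Assumption~\ref{ass:ql}~\ref{ass:2ndorder12} yields a control by $-\lambda \int_D \psi_m''(v^\alpha)|\nabla v^\alpha|^2 \d x$.

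\textbf{The $\Phi$-term.} For the $\overline{\Phi}$-contribution, set $F^\alpha_{m,i}(y) := \int_0^y \psi_m''(z) \overline{\Phi}_i^\alpha(z) \d z$, so that $\psi_m''(v^\alpha)\overline{\Phi}_i^\alpha(v^\alpha) \partial_i v^\alpha = \partial_i F^\alpha_{m,i}(v^\alpha)$; since $v^\alpha|_{\partial D}=0$ and $F^\alpha_{m,i}(0)=0$, the divergence theorem kills this term. For $\wh{\Phi}$, use Young's inequality,
\[
\Bigl| \int_D \psi_m''(v^\alpha)\wh{\Phi}_i^\alpha(v)\partial_i v^\alpha \d x \Bigr| \leq \eps \int_D \psi_m''(v^\alpha) |\nabla v^\alpha|^2 \d x + C_\eps \int_D \psi_m''(v^\alpha)|\wh{\Phi}^\alpha(v)|^2 \d x,
\]
and the Lipschitz bound $|\wh{\Phi}(v)|^2 \lesssim 1+|v|^2$ together with \eqref{eq:phim_snd_b1}–\eqref{eq:phim_snd_inc} controls the second integrand by $1+\sum_\beta \psi_m(v^\beta)$. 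The $\phi$-term is absorbed by Lemma~\ref{lem:dissip}, and the $g$-cross terms are handled identically using linear growth of $g$.

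\textbf{Closing the estimate and passing to the limit.} Choosing $\eps$ small enough, we absorb the gradient contributions into the coercive term and, after a standard stopping time argument to neutralize the local martingale, take expectations to obtain
\begin{align*}
\Exp V_m(t) + \tfrac{\lambda}{2}\Exp \int_0^t \sum_\alpha \int_D \psi_m''(v^\alpha)|\nabla v^\alpha|^2 \d x\, \d s \leq \Exp V_m(0) + C\int_0^t \bigl(1 + \Exp V_m(s)\bigr)\d s,
\end{align*}
with $C$ depending only on the Assumption~\ref{ass:ql} data (not on~$L$). Gronwall yields a uniform-in-$m$ bound, after which the pointwise convergences $\psi_m \to |\cdot|^q$ and $\psi_m'' \to |\cdot|^{q-2}$ from Lemma~\ref{lem:p-approx}, combined with Fatou on the left-hand side and monotone/dominated convergence on the right ($\psi_m(v_0^\alpha) \leq |v_0^\alpha|^q$), give the stated inequality.

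\textbf{Main obstacle.} The delicate point is the rigorous justification of the Itô formula with the nonlinear composition $\psi_m$ for a variational solution of merely $L^2$-regularity in space; this is resolved by spatial mollification together with the boundedness of $\psi_m''$ and Lipschitzness of $\psi_m'$, allowing passage to the limit in each term. The other subtlety is bookkeeping: one must verify that the integration-by-parts cancellation of $\overline{\Phi}$ is the only mechanism available to keep the implicit constant independent of $L$, since any estimate using the linear-growth bound of $\Phi$ would introduce $L^2$.
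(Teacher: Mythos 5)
Your proposal follows essentially the same route as the paper's proof: apply the Pardoux-type Itô formula to $\int_D \psi_m(v^\alpha)\,\d x$, use the diagonal structure to combine the elliptic and $b$-quadratic-variation terms via the pointwise coercivity condition into a dissipative gradient term, kill the $\overline{\Phi}$ contribution by the antiderivative/divergence-theorem cancellation, absorb $\widehat{\Phi}$, $\phi$, and $g$ via Young, Lemma~\ref{lem:dissip}, and linear growth, then close with Gronwall and $m\to\infty$. The minor differences are presentational only: you make explicit the stopping-time argument to neutralize the local martingale and the Fatou/dominated-convergence step for the $m\to\infty$ passage (the paper takes expectations directly and then invokes pointwise convergence), and you flag why the constant is $L$-free, which the paper leaves implicit; both are accurate elaborations of the same argument.
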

The additional growth condition on $\phi$ and $\Phi$ are needed to make sure that the functions in the proof below are integrable. Later on, we will apply the lemma to truncated versions of $\phi$ and $\Phi$, and therefore the growth condition does not lead to additional assumptions.

	\begin{proof}
		Fix $\alpha$ and let $m \geq 1$, $t\in [0,T]$. Define the linear functional $v \mapsto \int_\Dom \psi_m(v) \d x$ on $\L^2(\Dom)$. Since $\psi_m''$ is bounded and, taking~\eqref{eq:phim_lingro} into account, $\psi_m'$ is of linear growth, the It\^o formula from~\cite[Thm.~4.2]{Pardoux} is applicable and yields
		\begin{align}
			\int_\Dom \psi_m(v^\alpha(t)) \d x &= \int_\Dom \psi_m(v^\alpha(0)) \d x \\
			&\quad -\int_0^t \int_\Dom \psi_m''(v^\alpha) a_{ij}^{\alpha}(v) \partial_i v^\alpha \partial_j v^\alpha \d x \d s \\
			&\quad -\int_0^t \int_\Dom \psi_m''(v^\alpha) \Phi_i^\alpha(v) \partial_i v^\alpha \d x \d s \\
			&\quad +\int_0^t \int_\Dom \psi_m'(v^\alpha) \phi^\alpha(v) \d x \d s \\
			&\quad + \sum_n \int_0^t \int_\Dom \psi_m'(v^\alpha) \bigl[ b_{n,j}^{\alpha}(v^\alpha) \partial_j v^\alpha + g_n^\alpha(v) \bigr] \d x \d w_n \\
			&\quad + \sum_n \frac{1}{2} \int_0^t \int_\Dom \psi_m''(v^\alpha) \Bigl| \sum_j b_{n,j}^{\alpha}(v^\alpha) \partial_j v^\alpha + g_n^\alpha(v) \Bigr|^2 \d x \d s \\
			&\quad = \I - \II - \III + \IV + \VV + \VI.
		\end{align}
Recall that $\Phi^{\alpha}(s,x,y) = \wh{\Phi}^\alpha(s,x,y) + \overline{\Phi}^{\alpha}(y^{\alpha})$. Therefore, the part of $\III$ involving $\overline{\Phi}^{\alpha}$ vanishes as can be seen in the same way as in \cite[Lemma 3.5]{AVdissi}. Indeed, 
\begin{align*}
\int_\Dom \psi_m''(v^\alpha) \overline{\Phi}_i^\alpha(v^{\alpha}) \partial_i v^\alpha \d x = \int_{\Dom} \div_x \int_0^{v^\alpha} \psi_m''(r) \overline{\Phi}_i^\alpha(r)  \d r \d x =0,
\end{align*}
where we applied the divergence theorem and the fact that $v^{\alpha}$ vanishes at the boundary. Thus $\III$ can be bounded as follows
\begin{align*}
-\III&\leq \Big|\int_0^t \int_\Dom \psi_m''(v^\alpha) \wh{\Phi}_i^\alpha(v) \partial_i v^\alpha \d x \d s\Big|\\ & \leq C_{\delta} \int_0^t \int_\Dom \psi_m''(v^\alpha) |\wh{\Phi}^\alpha(v)|^2 \d x \d s + \delta \int_0^t \int_\Dom \psi_m''(v^\alpha) |\nabla v^\alpha|^2 \d x \d s \\ & =: \III(\wh{\Phi}^\alpha(v)) + \III(\nabla v^{\alpha}).
\end{align*}

		Use the inequality $\nicefrac{1}{2} |b+g|^2 \leq (\nicefrac{1}{2} + \eps) b^2 + C_\eps g^2$ for all $\eps > 0$ in~$\VI$ to get
		\begin{align}
			\VI &\leq (\nicefrac{1}{2} + \eps) \sum_n \int_0^t \int_\Dom \psi_m''(v^\alpha) \Bigl| \sum_j b_{n,j}^{\alpha}(v^\alpha) \partial_j v^\alpha \Bigr|^2 \d x \d s \\
			&\qquad+ C_\eps \sum_n \int_0^t \int_\Dom \psi_m''(v^\alpha) |g_n^\alpha(v)|^2 \d x \d s \\
			&= \VI(b) + \VI(g).
		\end{align}
		By the \enquote{diagonal structure} of $a$ and $b$ stated in Assumption~\ref{ass:ql}~\ref{ass:2ndorder10}, the coercivity condition in Assumption~\ref{ass:ql}~\ref{ass:2ndorder12} can be applied componentwise. Therefore, by choosing $\eps$ and $\delta$ small enough (similar to the proof of Lemma~\ref{lem:ql-reg}), we get the lower bound
		\begin{align}
			\II - \III(\nabla v^{\alpha}) - \VI(b) \geq \frac{\lambda}{2} \int_0^t \int_\Dom \psi_m''(v^\alpha) |\nabla v^\alpha|^2 \d x \d s.
		\end{align}
		This is the only argument that uses Assumption~\ref{ass:ql}~\ref{ass:2ndorder10}, see Remark~\ref{rem:diagonal} for further discussion.
		Hence, we can absorb $-\II +\III(\nabla v^{\alpha}) + \VI(b)$ into the left-hand side, to give
		\begin{align}
			\label{eq:after_absorb}
			\begin{split}
				&\int_\Dom \psi_m(v^\alpha) \d x + \frac{\lambda}{2} \int_0^t \int_\Dom \psi_m''(v^\alpha) |\nabla v^\alpha|^2 \d x \d s \\
				\leq{} &\int_\Dom \psi_m(v^\alpha(0)) \d x \\
				&+ \int_0^t \int_\Dom \psi_m'(v^\alpha) \phi^\alpha(v) \d x \d s \\
				&+ \sum_n \int_0^t \int_\Dom \psi_m'(v^\alpha) \bigl[ b_{n,j}^{\alpha}(v^\alpha) \partial_j v^\alpha + g_n^\alpha(v) \bigr] \d x \d w_n \\
				&+ C_\eps \sum_n \int_0^t \int_\Dom \psi_m''(v^\alpha) |g_n^\alpha(v)|^2 \d x \d s+ C_\delta \int_0^t \int_\Dom \psi_m''(v^\alpha) |\wh{\Phi}^{\alpha}(v)|^2 \d x \d s.
			\end{split}
		\end{align}
		The first term on the right-hand side of~\eqref{eq:after_absorb} is controlled by $\| u_0 \|_{\L^q(\Dom)}^q$ by definition of $\psi_m$. We come to the second term. Applying Lemma~\ref{lem:dissip} with $v = v(s,x)$, estimate
		\begin{align}
			\int_0^t \int_\Dom \psi_m'(v^\alpha) \phi^\alpha(v) \d x \d s \lesssim 1 + \int_0^t \int_\Dom \sum_\beta \psi_m(v^\beta) \d x \d s.
		\end{align}
		Since $g$ is of linear growth, we get $\sum_{n}\psi_m''(v^\alpha) |g_n^\alpha(v)|^2 \lesssim \psi_m''(v^\alpha)(1+|v|^2)$, so the proof of Lemma~\ref{lem:dissip} reveals also
		\begin{align}
			\int_0^t \int_\Dom \psi_m''(v^\alpha) |g_n^\alpha(v)|^2 \d x \d s \lesssim 1 + \int_0^t \int_\Dom \sum_\beta \psi_m(v^\beta) \d x \d s.
		\end{align}
		Since $\wh{\Phi}^{\alpha}$ is Lipschitz, a similar estimate holds for $\psi_m''(v^\alpha) |\wh{\Phi}^{\alpha}(v)|^2$.
		So far, we have in summary
		\begin{align}
			\label{eq:before_exp}
			\begin{split}
			&\int_\Dom \psi_m(v^\alpha) \d x + \frac{\lambda}{2} \int_0^t \int_\Dom \psi_m''(v^\alpha) |\nabla v^\alpha|^2 \d x \d s \\
			\lesssim{} &1 + \| u_0 \|_{\L^q(\Dom)}^q + \int_0^t \int_\Dom \sum_\beta \psi_m(v^\beta) \d x \d s \\
			&\quad+ \sum_n \int_0^t \int_\Dom \psi_m'(v^\alpha) \bigl[ b_{n,j}^{\alpha}(v^\alpha) \partial_j v^\alpha + g_n^\alpha(v) \bigr] \d w_n \d s.
			\end{split}
		\end{align}
		We take the expectation, so that the stochastic integral vanishes, and sum in $\alpha$ afterwards, to find
		\begin{align}
			\label{eq:after_exp}
			\begin{split}
			&\Exp\int_\Dom \sum_\alpha \psi_m(v^\alpha) \d x + \frac{\lambda}{2} \Exp \int_0^t \int_\Dom \sum_\alpha \psi_m''(v^\alpha) |\nabla v^\alpha|^2 \d x \d s \\
			\lesssim{} &1 + \Exp \| u_0 \|_{\L^q(\Dom)}^q + \Exp \int_0^t \int_\Dom \sum_\alpha \psi_m(v^\alpha) \d x \d s.
			\end{split}
		\end{align}
		Note that for all $t \in [0,T]$ one has $\Exp \int_0^t \int_\Dom \sum_\alpha \psi_m(v^\alpha(s)) \d x \d s$ is finite, for $\psi_m(v(s)) \leq |v(s)|^2 + C$. Therefore, Gronwall's inequality is applicable with $t \mapsto \Exp \int_\Dom \sum_\alpha \psi_m(v^\alpha(t)) \d x$ and yields
		\begin{align}
			\label{eq:gronwall}
			\Exp\int_\Dom \sum_\alpha \psi_m(v^\alpha) \d x \lesssim 1 + \Exp \| u_0 \|_{\L^q(\Dom)}^q
		\end{align}
		for all $t\in (0,T)$. Integrate this bound in $t$, use Fubini's theorem, and take (in virtue of the pointwise convergence of $\psi_m$ from Lemma~\ref{lem:p-approx}) the limit $m\to \infty$ to conclude the first claim of the proposition.
		Finally, plug~\eqref{eq:gronwall} back into~\eqref{eq:after_exp} to deduce
		\begin{align}
			\frac{\lambda}{2} \Exp \int_0^t \int_\Dom \sum_\alpha \psi_m''(v^\alpha) |\nabla v|^2 \d x \d s
			\lesssim{} &1 + \Exp \| u_0 \|_{\L^q(\Dom)}^q.
		\end{align}
		This time using the pointwise convergence of $\psi_m''$ stated in Lemma~\ref{lem:p-approx}, take once more the limit $m\to \infty$. Afterwards, take the limit $t\to T$ to conclude the second claim in the proposition.
	\end{proof}

	\begin{remark}
		Even more is true in the last proposition: if we take first the supremum over $t$ in~\eqref{eq:before_exp} and then expectations, a calculation based on the Burkholder--Davis--Gundy inequality even yields the estimate
		\begin{align}
			\|v\|_{\L^q(\Omega;\Cont([0,T];\L^q(\Dom)))} \lesssim 1+\|u_0\|_{\L^q(\Omega;\L^q(\Dom))}.
		\end{align}
		We will not need this extra information and decided therefore to stick to the shorter proof that leads to Proposition~\ref{prop:ito_trace}.
	\end{remark}

	\begin{proof}[Proof of Theorem~\ref{thm:quasi2}]
		Write $P_R$ for the projection onto the closed ball of radius $R$ around $0$ in $\R^N$.
		Then, for $R>0$, define the non-linearities ${}_R\phi^\alpha(t,x,y) \coloneqq \phi^\alpha(t,x,P_R(y))$ and ${}_R\Phi^\alpha_{i}(t,x,y) \coloneqq \Phi^\alpha_i(t,x,P_R(y))$. They are Lipschitz continuous and bounded (in particular, of linear growth) with constant depending on $R$, and satisfy Assumptions~\ref{ass:ql}~\ref{ass:2ndorder13} and~\ref{ass:2ndorder14} uniformly in $R$ by virtue of Lemma~\ref{lem:projection}.
		Consider the equation
		\begin{align}
			\label{eq:ql_R}
			\tag{R-QL}
			\left\{\quad
			\begin{aligned}
				\d u^\alpha  &= \big[ \partial_i (a^{\alpha\beta}_{ij}(u) \partial_j u^\beta) + \partial_i ({}_R\Phi^\alpha_{i}(u)) +{}_R\phi^\alpha(u) \big] \d t
				\\  & \qquad \qquad
				+ \sum_{n\geq 1}  \big[ b^{\alpha\beta}_{n,j}(u^\beta) \partial_j u^\beta + g^\alpha_{n}(u) \big] \d w_n,\ \ &\text{ on }\Dom,\\
				u^\alpha &= 0, \ \ &\text{ on }\partial\Dom,
				\\
				u^\alpha(0)&=u^\alpha_{0},\ \ &\text{ on }\Dom.
			\end{aligned}\right.
		\end{align}

		Problem~\eqref{eq:ql_R} fulfills Assumption~\ref{ass:ql_lip}. Therefore, by the well-posedness result of Section~\ref{sec:ql_lip}, there are $p_0 > 2$ and solutions
		$$u_R \in \L^p(\Omega \times (0,T); \H^1_0(\Dom)) \cap \L^p(\Omega; \Cont([0,T]; \B^{1-\nicefrac{2}{p}}_{2,p,0}(\Dom)))$$ to~\eqref{eq:ql_R}.

		Apply Proposition~\ref{prop:ito_trace} to $u_R$ to find $\| u_R \|_{\L^q(\Omega \times (0,T) \times D)} \lesssim 1 + \| u_0 \|_{\L^q(\Omega \times D)}$. With the growth condition for the non-linearities we find, keeping $ph \leq q$ in mind,
		\begin{align}
			\label{eq:moment_cutoff_nonlin}
			&\|{}_R\phi^\alpha(u_R)\|_{\L^p(\Omega\times(0,T);L^2(D))} + \|{}_R\Phi^\alpha_i(u_R)\|_{\L^p(\Omega\times(0,T);L^2(D))} \\
			\lesssim{} &1+\|u_{R}\|^h_{L^{q}(\Omega\times (0,T) \times D)}
			\\ \lesssim{} &1+\|u_0\|^h_{L^{q}(\Omega \times D)}.
		\end{align}
		Therefore, we can freeze $u_R$, ${}_R\Phi$ and ${}_R\phi$ in~\eqref{eq:ql_R} to obtain, as in Section~\ref{subsec:unif_bounds1}, a linear problem to which Theorem~\ref{thm:main1} applies. This results in the uniform bounds
		\begin{align}
			\label{eq:ql_grow_bound1}
			\| u_R \|_{\L^p(\Omega; \Cont([0,T]; \B^{1-\nicefrac{2}{p}}_{2,p,0}(\Dom)))} \lesssim 1 + \| u_0 \|_{\L^p(\Omega; \B^{1-\nicefrac{2}{p}}_{2,p,0}(\Dom))} + \|u_0\|^h_{L^{q}(\Omega \times D)},
		\end{align}
		and for all $\theta \in [0,\nicefrac{1}{2})$ the estimate
		\begin{align}
			\label{eq:ql_grow_bound2}
			\| u_R \|_{\L^p(\Omega; \H^{\theta,p}(0,T; \H^{1-2\theta}_0(\Dom)))} \lesssim_\theta 1 + \| u_0 \|_{\L^p(\Omega; \B^{1-\nicefrac{2}{p}}_{2,p,0}(\Dom))} + \|u_0\|^h_{L^{q}(\Omega \times D)}.
		\end{align}
		If $\theta \in (\nicefrac{1}{p}, \nicefrac{1}{2})$, then one has moreover the estimate
		\begin{align}
			\label{eq:ql_grow_bound3}
			\| u_R \|_{\L^p(\Omega; \Cont^{\theta-\nicefrac{1}{p}}([0,T]; \H^{1-2\theta}_0(\Dom)))} \lesssim_\theta 1 + \| u_0 \|_{\L^p(\Omega; \B^{1-\nicefrac{2}{p}}_{2,p,0}(\Dom))} + \|u_0\|^h_{L^{q}(\Omega \times D)}.
		\end{align}
		Then, the conclusion using the stochastic compactness argument works analogous, with one additional argument: first, by~\eqref{eq:moment_cutoff_nonlin}, the terms corresponding to $\Phi$ and $\phi$ in~\eqref{eq:martingale_candidates} satisfy the same moment condition as before. Second, for the almost sure convergence, decompose
		\begin{align}
			\label{eq:phi_decomp}
			\phi(\wt{u}) - {}_R\phi(\wt{u}_R) = (\phi(\wt{u})-\phi(\wt{u}_R)) + (\phi(\wt{u}_R) - {}_R\phi(\wt{u}_R)),
		\end{align}
		likewise for ${}_R\Phi_{i}$.
		As seen before, by Vitali's convergence theorem, it suffices to show $(r,x)$ almost everywhere convergence of $\lb {}_R\phi(\wt{u}_R), \xi \rb$ and $\lb {}_R\Phi_{i}(\wt{u}_R), \partial_i \xi \rb$, and upon passing to a subsequence, we can assume that $\wt{u}_R$ converges to $\wt{u}$ for almost every $(s,x)$. Therefore, the second term on the right-hand side of~\eqref{eq:phi_decomp} goes to zero since $\wt{u}_R$ is a bounded sequence for fixed $(s,x)$, so that eventually $\phi$ and ${}_R\phi$ coincide. For the first term on the right-hand side of~\eqref{eq:phi_decomp}, we upgrade almost everywhere convergence of the subsequence to $\L^{ph}((0,T) \times \Dom)$ convergence using Vitali's theorem (recall that $q>ph)$. Then the claim follows by continuity of the Nemytski operator~\cite[Thm.~3.2.24]{DM}, that is to say, continuity of the operator $v\mapsto \phi(t,x,v)$ from $\L^{ph}((0,T) \times \Dom) \to \L^p((0,T) \times \Dom)$. Here, we use that $\phi$ is a Carath\'eodory function satisfying the uniform growth condition stated in Assumption~\ref{ass:ql}~\ref{ass:2ndorder13}.
	\end{proof}

\bibliographystyle{plain}

\end{document}